\newtheorem{thm}{Theorem}[section]
\theoremstyle{plain}
\newtheorem{lem}[thm]{Lemma}
\newtheorem{prop}[thm]{Proposition}
\newtheorem{cor}[thm]{Corollary}
\theoremstyle{definition}
\newtheorem{defn}[thm]{Definition}
\newtheorem{example}[thm]{Example}
\theoremstyle{remark}
\newtheorem{rem}[thm]{Remark}
\definecolor{A}{rgb}{.75,1,.75}
\numberwithin{equation}{section}
\newcommand{\mZ}{\mathcal{Z}}
\newcommand{\be}{\beta}
\newcommand{\ga}{\gamma}
\newcommand{\Ga}{\Gamma}
\newcommand{\va}{\varepsilon}
\newcommand{\La}{\Lambda}
\newcommand{\ovG}{\overline{G}}
\newcommand{\F}{\mathbb{F}}
\begin{document}

\title[Wreath Hecke algebras and centralizer construction]{Wreath Hecke
algebras and centralizer construction for wreath products}

\author{Jinkui Wan}
\address{Department of Mathematics, University of Virginia,
Charlottesville,VA 22904, USA.} \email{jw5ar@virginia.edu}

\begin{abstract}
Generalizing the centralizer construction of Molev and Olshanski
on symmetric groups, we study the structures of the centralizer
$\mZ_{m,n}$ of the wreath product $G_{n-m}$ in the group algebra
of $G_n$ for any $n\geq m$. We establish the connection between
$\mZ_{m,n}$ and a generalization of degenerate affine Hecke
algebras introduced in our earlier work.
\end{abstract}

\maketitle \setcounter{tocdepth}{1} \tableofcontents

\section{Introduction}
\subsection{} Let $\F$ be an arbitrary field. It is well known that
there is a canonical surjective homomorphism from the degenerate
affine Hecke algebra $\mathcal{H}_m$ introduced by
Drinfeld~\cite{Dr} and Lusztig~\cite{Lus} to the group algebra $\F
S_m$ of the symmetric group $S_m$, where the polynomial generators
are mapped to the Jucys-Murphy elements in $\F S_m$. This has
played a key role in a new approach to the representation theory
of symmetric groups in arbitrary characteristic ~(cf. Kleshchev's
book~\cite{K}). Let $G$ be a finite group and $G_m$ be the wreath
product of $G$ with the symmetric group $S_m$. In a joint work
with Wang~\cite{WW}, the author has further explored modular
representations of wreath products $G_m$ by introducing wreath
Hecke algebras $\mathcal{H}_m(G)$. When $G=C_r$ a cyclic group,
$\mathcal{H}_m(C_r)$ also appeared in Ram and Shepler \cite{RS} in
their search of degenerate (=graded) Hecke algebras associated to
complex reflection groups. As in the case of symmetric groups,
there exists a surjective homomorphism from $\mathcal{H}_m(G)$ to
$\F G_m$ with the polynomial generators being mapped to the
generalized Jucys-Murphy elements in $\F G_m$ (introduced
independently in~\cite{Pu} and~\cite{W1} with different
applications).

On the other hand, Olshanski~\cite{O1} shows that there is a
natural homomorphism from the degenerate affine Hecke algebra
$\mathcal{H}_m$ to the centralizer of $S_{n-m}$ in the group
algebra $\mathbb{F}S_n$ for any $n\geq m$. This fact is used by
Okounkov and Vershik~\cite{OV} to develop a new approach to the
representation theory of the symmetric groups over the complex
field $\mathbb{C}$. In~\cite{MO}, Molev and Olshanski further
studied the centralizer of $S_{n-m}$ in $\mathbb{F}S_n$ and
establish its deeper connection with $\mathcal{H}_m$.

The main goal of the present work is to study the algebraic
structure of the centralizer $\mZ_{m,n}$ of the wreath product
$G_{n-m}$ in $\F G_n$ and investigate its interrelation with the
wreath Hecke algebra $\mathcal{H}_m(G)$. We are much inspired by
the approach of centralizer constructions for Lie algebras
$\mathfrak{gl}(n)$ in~\cite{O1} and symmetric groups $S_n$
in~\cite{MO}. Our work specializes to~\cite{MO} when $G=\{1\}$,
however we need to generalize various concepts from symmetric
groups and develop new delicate combinatorial analysis in order to
treat the extra complications coming from the presence of the
group $G$.



\subsection{} The approach of centralizer construction is proposed
by Olshanski in~\cite{O1}, where the centralizer
$\mathcal{A}_m(n)$ of the subalgebra $\mathfrak{gl}(n-m)$ in the
enveloping algebra $U(\mathfrak{gl}(n))$ is studied. There exists
a chain of algebra homomorphisms
\begin{equation}\label{chain1}
\cdots\rightarrow\mathcal{A}_m(n)\rightarrow\mathcal{A}_m(n-1)
\rightarrow\cdots\rightarrow\mathcal{A}_m(m+1)\rightarrow\mathcal{A}_m(m),
\end{equation}
associated to which one can introduce and study the projective
limit algebra $\mathcal{A}_m$. It turns out that the Yangian
$Y(\mathfrak{gl}(m))$ \cite{O2} for $\mathfrak{gl}(m)$ appears as a
subalgebra of $\mathcal{A}_m$ and there is a surjective
homomorphism from $Y(\mathfrak{gl}(m))$ to the centralizer
$\mathcal{A}_m(n)$.
%
%
%
%
When Molev and Olshanski tried to extend the construction to
symmetric groups, they encountered the new difficulty that there
does not exist a natural analog of the chain~(\ref{chain1}) for
the centralizers of $S_{n-m}$ in $\F S_n$. However it turns out to
work well if one takes the semigroup algebra $\F \Ga(n)$ instead
of the group algebra $\F S_n$, where $\Ga(n)$ is the semigroup of
$(0,1)$-matrices which have at most one $1$ in each row and
column~(cf. \cite{MO}).

When we extend the work to wreath products, the same problem
arises and hence similarly we shall begin with the semigroup
$\ovG_n$ consisting of $n\times n$-matrices with entries in
$G\cup\{0\}$ such that any row and column have at most one nonzero
entry, which specializes to the semigroup $\Ga(n)$ when $G=\{1\}$.
Indeed we have the canonical projection
$\theta_n:\ovG_{n}\rightarrow\ovG_{n-1}$ which maps
$\gamma\in\ovG_n$ to the upper left corner of $\gamma$ of order
$n-1$. Fix a nonnegative integer $m$ and for any $n\geq m$, denote
by $\overline{\mZ}_{m,n}$ the centralizer of $\ovG_{n-m}$ in the
semigroup algebra $\mathbb{F}\ovG_n$. The projection
$\theta_n:\ovG_{n}\rightarrow\ovG_{n-1}$ gives rise to a chain of
homomorphisms for the algebras $\overline{\mZ}_{m,n}$ as:
$$
{}\cdots\rightarrow \overline{\mZ}_{m,n}\rightarrow
\overline{\mZ}_{m,-1}\rightarrow\cdots\rightarrow
\overline{\mZ}_{m,m+1}\rightarrow \overline{\mZ}_{m,m}=\F\ovG_m.
$$
We define the algebra $\overline{\mZ}_m$ as the corresponding
projective limit.
We show that the algebra $\overline{\mZ}_m$ has a decomposition as
:
$$
\overline{\mZ}_m\cong\overline{\mZ}_0\otimes\overline{\mathcal{H}}_m(G),
$$
where $\overline{\mathcal{H}}_m(G)$ is a "semigroup analog" of the
wreath Hecke algebra $\mathcal{H}_m(G)$ in~\cite{WW}. We obtain an
algebra homomorphism
$\overline{\Psi}:\overline{\mathcal{H}}_m(G)\rightarrow
\overline{\mZ}_{m,n}$ for any $n\geq m$ and formulate a
presentation of $\overline{\mathcal{H}}_m(G)$ by generators and
relations. We show that there exists a natural surjective algebra
homomorphism from $\F\ovG_n$ to $\F G_n$ and moreover it induces
epimorphisms
$\overline{\Phi}:\overline{\mathcal{H}}_m(G)\rightarrow\mathcal{H}_m(G)$
and $\Phi:\overline{\mZ}_{m,n}\rightarrow \mZ_{m,n}$. Hence we
obtain a natural homomorphism $\Psi:\mathcal{H}_m(G)\rightarrow
\mZ_{m,n}$ and the following commutative diagram:
$$\unitlength=1cm
\begin{picture}(6,2.5)
\put(1.1,2){$\overline{\mathcal{H}}_m(G)$}
\put(3.9,2){$\overline{\mZ}_{m,n}$}
\put(1.1,0.2){$\mathcal{H}_m(G)$} \put(3.9,0.2){$\mZ_{m,n}$}
\put(2.3,2.1){\vector(1,0){1.5}} \put(2.3,0.3){\vector(1,0){1.5}}
\put(2.65,2.2){$\overline{\Psi}$} \put(2.65,0.4){$\Psi$}
\put(1.5,1.85){\vector(0,-1){1.3}}
\put(4.4,1.85){\vector(0,-1){1.3}}
\put(1.03,1.1){$\overline{\Phi}$} \put(4.6,1.1){$\Phi$}
\end{picture}$$

As an application, we consider the case when $\F=\mathbb{C}$ and
show that the Gelfand-Zetlin subalgebra of $\mathbb{C}G_n$ is
generated by the Jucys-Murphy elements in $\mathbb{C}G_n$ and a
maximal commutative subalgebra of $\mathbb{C}G^n$. It is
semisimple and a maximal commutative subalgebra of the group
algebra $\mathbb{C}G_n$. This is a wreath analog of the result
in~\cite[Theorem~11]{O1} for symmetric groups.


\subsection{} The paper is organized as follows. In
Section~\ref{Cc}, we introduce the semigroup $\ovG_n$ which is a
generalization of the wreath product $G_n$. For each $m\geq 0$ we
construct the algebras $\overline{\mZ}_m$ as projective limits of
the centralizers $\overline{\mZ}_{m,n}$ of $\ovG_{n-m}$ in
$\F\ovG_n$ for all $n\geq m$. In Section~\ref{mZ0}, we give a
linear basis for the algebra $\overline{\mZ}_0$. In
Section~\ref{mZm}, we investigate the algebraic structure of
$\overline{\mZ}_m$ for $m>0$. In Section~\ref{CwRh}, we give an
interrelation between $\mZ_{m,n}$ and  wreath Hecke algebras
$\mathcal{H}_m(G)$.

{\bf Acknowledgements.} I am deeply grateful to my advisor W. Wang
for many helpful suggestions for this paper.


\section {The centralizer construction}\label{Cc}

\subsection{The wreath products}
Let $G$ be a finite group with unity $1$, and $G_*=\{C_1,\ldots,
C_r\}$ be the set of all conjugacy classes of $G$ with
$C_1=\{1\}$. The symmetric group $S_n$ acts on the product group
$G^n=G\times \cdots\times G$ by permutations:
$$
{}^{w}g:=w (g_1,\ldots, g_n)=(g_{w^{-1}(1)},\ldots, g_{w^{-1}(n)})
$$
for  $g=(g_1,\ldots,g_n)\in G^n$ and $w\in S_n$. The wreath
product of $G$ with $S_n$ is defined to be the semidirect product
$$
G_n=G^n\rtimes S_n=\{(g,w)|~g=(g_1,\ldots, g_n)\in G^n, w\in S_n\}
$$
with the multiplication
$$
(g,w)(h,\tau)=({}^{w}g\,h, w\tau).
$$

Let $\lambda=(\lambda_1,\ldots, \lambda_l)$ be a partition of
integer $|\lambda|=\lambda_1+\cdots+\lambda_l$, where
$\lambda_1\geq \dots \geq \lambda_l \geq 1$.
We will identify the partition $(\lambda_1, \lambda_2, \ldots,
\lambda_l)$ with $(\lambda_1, \lambda_2, \ldots, \lambda_l, 0,
\ldots, 0)$. We will also write a partition as $
\lambda=(1^{m_1}2^{m_2}\cdots), $ where $m_i$ is the number of
parts in $\lambda$ equal to $i$.

We will use partitions indexed by $G_*$. For a finite set $X$ and
$\rho=(\rho(x))_{x\in X}$ a family of partitions indexed by $X$,
we write
$$\|\rho\|=\sum_{x\in X}|\rho(x)|.$$
Sometimes it is convenient to regard $\rho=(\rho(x))_{x\in X}$ as
a partition-valued function on $X$. We denote by $\mathcal P(X)$
the set of all partitions indexed by $X$ and by $\mathcal P_n(X)$
the set of all partitions in $\mathcal P(X)$ such that
$\|\rho\|=n$.

The conjugacy classes of $G_n$ can be described as follows. Let
$x=(g, \sigma )\in G_n$, where $g=(g_1, \cdots, g_n) \in G^n,$ $
\sigma \in S_n$. The permutation $\sigma $ is written as a product
of disjoint cycles. For each such cycle $y=(i_1 i_2 \cdots i_k)$
the element $g_{i_k} g_{i_{k -1}} \cdots g_{i_1} \in G$ is
determined up to conjugacy in $G$ by $g$ and $y$, and will be
called the {\em cycle-product} of $x$ corresponding to the cycle
$y$. For any conjugacy class $C$ and each integer $i\geq 1$, the
number of $i$-cycles in $\sigma$ such that the corresponding
cycle-product lies in $C$ will be denoted by $m_i(C)$. Denote by
$\rho (C)$ the partition $(1^{m_1 (C)} 2^{m_2 (C)} \ldots )$, $C
\in G_*$. Then each element $x=(g, \sigma)\in G_n$ gives rise to a
partition-valued function $( \rho (C))_{C \in G_*} \in {\mathcal
P} ( G_*)$ such that $\sum_{i, C} i m_i(C) =n$. The
partition-valued function $\rho =( \rho(C))_{ C \in G_*}
=(\rho(C_1),\ldots,\rho(C_r))$ is called the {\em type} of $x$. It
is known (cf. \cite[Section 4.2]{JK}, \cite[Chapter I, Appendix B]{Mac})
that any two elements of $G_n$ are
conjugate in $G_n$ if and only if they have the same type.
\subsection{Jucys-Murphy elements}
Note that for each $1\leq m\leq n$, $G_m$ embeds in $G_n$
canonically as the subgroup $G_m\times 1$. On the other hand,
$G_m$ can also embed in $G_n$ as the subgroup $1\times G_m$, which
we shall denote by $G^{\prime}_m$; namely
\begin{equation}\label{eqn:G_m'}
G^{\prime}_m:=\{((g_1,\cdots,g_n),\sigma)|~g_i=1 \text{ for }1\leq
i\leq n-m, \sigma \text{ fixes }1,\ldots,n-m\}.
 \end{equation}
 For each $h\in G$ and $1\leq i\leq n$, let
$h^{(i)}\in G^n$ correspond to $h$ in the $i$th factor group
$G^{(i)}$ of $G^n$. For each $1\leq k<l\leq n$, set
$$
t_{kl}=\sum_{h\in G}h^{(k)}(h^{-1})^{(l)}.
$$
Recall the (generalized) {\it Jucys-Murphy elements} $\xi_k\in
\mathbb{F}G_n (1\leq k\leq n)$ are introduced independently in
\cite{Pu} and \cite{W1} as follows:
\begin{eqnarray}\label{eqn:jucys}
\xi_k: =\sum_{l=k+1}^{n}\sum_{h\in G} \left(h^{(k)}(h^{-1})^{(l)},
(k,l) \right)=\sum_{ l=k+1}^{n}t_{kl}(k,l),
\end{eqnarray}
where $(k,l)$ is the transposition permuting $k$ and $l$ in the
symmetric group $S_n$. Note that $\xi_n=0$ and $\xi_k$ commutes
with $G^{\prime}_{n-k}$. As
$\xi_k\in\mathbb{F}G^{\prime}_{n-k+1}$, it follows that the
Jucys-Murphy elements commute.
\subsection{The semigroups $\ovG_n$ }
We shall often identify the symmetric group $S_n$ with the group
consisting of $(0,1)$-matrices of size $n\times n$ which have
exactly one $1$ in each row and each column, where the product is
the matrix multiplication. Indeed each permutation $\sigma\in S_n$
corresponds to the $n\times n$ matrix $M(\sigma)$ as follows:
\begin{eqnarray*}
M(\sigma)_{ij}=\left\{
 \begin{array}{ll}
 1, & \text{ if }\sigma(j)=i,\\
 0, &\text{ otherwise}.
 \end{array}
 \right.
\end{eqnarray*}

Now we consider the union $G\cup\{0\}$, where $0$ is an extra
symbol, and adopt the convention that
\begin{equation}
g0=0g=0,g+0=0+g=g\text{ for any }g\in G.\label{multcon}
\end{equation}
Then the wreath product $G_n$ can be identified with the group
consisting of $n\times n$ matrices with entries in $G\cup\{0\}$
such that any row and column have {\bf exactly} one nonzero entry,
where the product is the matrix multiplication with the
convention~(\ref{multcon}). In fact, the map sending $g\in G^n$
and $\sigma\in S_n$ to $\Delta(g)$ and $M(\sigma)$ respectively is
a group isomorphism, where $\Delta(g)$ is the diagonal matrix with
$\Delta(g)_{ii}=g_i$ for any $g=(g_1,\ldots,g_n)\in G^n$ and
$1\leq i\leq n$.

\begin{defn}
For each $n\in\{1,2,\ldots\}$, define $\ovG_n$ to be the semigroup
which consists of $n\times n$ matrices with entries in
$G\cup\{0\}$ such that any row and column have {\bf at most} one
nonzero entry, where the product is the matrix multiplication with
the convention~(\ref{multcon}). We set $\ovG_0=\{1\}$. Note that
the wreath product $G_n$ is a subgroup of the semigroup $\ovG_n$.
\end{defn}
\begin{rem} The semigroup $\ovG_n$ is the specialization of the
semigroup $\Ga(n,D)$ introduced in~\cite[Definition 5.3]{MO} to
the case when $D=G$. Moreover if $G=\{1\}$, then $\ovG_n$
coincides with the semigroup $\Ga(n)$ of partial bijections on the
set $\{1,\ldots,n\}$ introduced in~\cite[Definition 2.6]{MO}.
\end{rem}
\begin{defn}\label{defn:proj.inj.}
For $1\leq m\leq n$, we define two mappings as follows.
\begin{enumerate}
\item The {\it canonical projection } $\theta: \ovG_n\rightarrow
\ovG_m$. For $\ga\in \ovG_n$,  $\theta(\ga)$ is the upper left
corner of $\ga$ of order $m$.

\item The {\it canonical embedding} $\phi:\ovG_m\hookrightarrow
\ovG_n$. For each $\ga\in \ovG_m$, $\phi(\ga)$ is defined as
follows:
\begin{eqnarray*}
\phi(\ga)_{ij}=\left\{
 \begin{array}{ll}
 \ga_{ij}, & \text{ if }1\leq i,j\leq m,\\
 \delta_{ij}, &\text{ otherwise}.
 \end{array}
 \right.
\end{eqnarray*}
%
%
%

%
%
\end{enumerate}
\end{defn}
%

%

For $1\leq i\leq n$, let us denote by $\va_i$ the diagonal
$n\times n$-matrix whose $(i,i)$th entry is $0$ and all other
diagonal entries are equal to 1. Note that the semigroup $\ovG_n$
is generated by $G_n$ and the elements $\va_i$, $1\leq i\leq n$.
Moreover we have for any $1\leq i\leq n$ and $h\in G, g\in G^n,
\sigma\in S_n$,
\begin{equation}\label{eqn:semigprel}
h^{(i)}\va_i=\va_ih^{(i)}=\va_i,\quad g\va_i=\va_ig,\quad
\sigma\va_i=\va_{\sigma(i)}\sigma.
\end{equation}
%
Let $s_i=(i,i+1)\in S_n$ be the transposition for each $1\leq
i\leq n-1$. It is known that $S_n$ is generated by
$s_1,\ldots,s_{n-1}$ subject to the relation
\begin{equation}\label{reln:braid}
s_i^2=1,\quad s_is_j=s_js_i, \quad
s_is_{i+1}s_i=s_{i+1}s_is_{i+1}, \quad|i-j|>1.
\end{equation}
\begin{prop}\label{prop:pre. of ovGn} The semigroup algebra $\F\ovG_n$ of $\ovG_n$
is isomorphic to the abstract algebra generated by $
s_1,\ldots,s_{n-1},\va_1,\ldots,\va_n$ and all $g\in G^n$ subject
to~(\ref{reln:braid}) and the following relations:
\begin{align}
s_ig&={}^{s_i}g\,s_i,\label{reln:symm.gp} \\
\va_i^2=\va_i,\quad \va_i\va_j&=\va_j\va_i,\label{reln:va}\\
s_i\va_i=\va_{i+1}s_i,\quad s_i\va_j&=\va_js_i,
\quad s_i\va_i\va_{i+1}=\va_i\va_{i+1},\quad j\neq i,i+1,\label{reln:symm.va.}\\
\quad \va_ig=g\va_i, \quad \va_i h^{(i)}&=h^{(i)}\va_i=\va_i,
\quad \forall g\in G^n,  h\in G. \label{reln:gp.va}
\end{align}
\end{prop}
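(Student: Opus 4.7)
The plan is to let $\mathcal{A}$ denote the abstract algebra defined by the stated generators and relations, to build a surjective homomorphism $\Psi: \mathcal{A} \to \F\ovG_n$, and then to establish $\dim_\F \mathcal{A} \leq |\ovG_n|$ via a normal-form argument so that $\Psi$ must be an isomorphism. Verifying that the relations hold in $\F\ovG_n$ is a direct matrix check using the convention (\ref{multcon}); the only nonobvious identity is $s_i \va_i \va_{i+1} = \va_i \va_{i+1}$, which holds because both sides correspond to the diagonal matrix whose entries at positions $i, i+1$ are zero and whose other diagonal entries are $1$.

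Next, I introduce a normal form on the semigroup side. Each $\ga \in \ovG_n$ has a unique zero-column set $K \subseteq \{1, \ldots, n\}$ and a unique zero-row set $I$ of the same cardinality. Extending $\ga$ to an element of $G_n$ by filling in each zero column $j \in K$ with the identity of $G$ and by demanding that the underlying permutation $\tilde\sigma$ restrict to the order-preserving bijection $K \to I$ produces a unique factorization $\ga = g \cdot \tilde\sigma \cdot \va_K$, with $\va_K := \prod_{k \in K} \va_k$; a direct count shows these triples bijectively parameterize $\ovG_n$. It remains to show every monomial in the generators of $\mathcal{A}$ can be rewritten in this form using the defining relations. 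Using (\ref{reln:symm.gp}) and the commutation $\va_j g = g \va_j$ from (\ref{reln:gp.va}) I push every $G^n$-factor to the far left; then, using the derived relation $s_i \va_j = \va_{s_i(j)} s_i$ (a consolidation of the first two cases of (\ref{reln:symm.va.}), with the absorption identity set aside) together with (\ref{reln:va}), I collect all $\va_j$'s into a single $\va_K$ on the far right; and finally $h^{(i)} \va_i = \va_i$ from (\ref{reln:gp.va}) resets $g_i$ to $1$ for each $i \in \tilde\sigma(K)$. This brings any monomial to the form $g \cdot \sigma \cdot \va_K$ with $g$ normalized but with $\sigma$ still only determined up to right multiplication by an element $\tau \in S_n$ supported on $K$.

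The main obstacle is this last ambiguity: one must prove $\va_K \tau = \va_K$ for every $\tau \in S_n$ with $\tau|_{K^c} = \mathrm{id}$, and here the absorption identity $s_i \va_i \va_{i+1} = \va_i \va_{i+1}$ is essential. Because $K$ need not consist of consecutive integers, $\tau$ is not a priori a product of generators $s_i$ with $i, i+1 \in K$. I would argue by induction on $|K|$, reducing to the case where $\tau$ is the transposition swapping two elements of $K$; for such a transposition, one conjugates by strings of $s_j$'s (using the derived identity $\va_i s_i = s_i \va_{i+1}$ and its iterates) to bring the two elements of $K$ to consecutive positions, applies the absorption identity to delete the resulting adjacent transposition, and then transports the $\va$'s back. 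The simplest nontrivial instance $K = \{1, 3\}$, $\tau = (1,3) = s_1 s_2 s_1$ already exhibits the mechanism. Once this is done, the normalized monomials $g \cdot \tilde\sigma \cdot \va_K$ span $\mathcal{A}$, their count equals $|\ovG_n|$, and comparing with the surjection $\Psi$ completes the proof.
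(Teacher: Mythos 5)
Your proof is correct, and the overall strategy coincides with the paper's: construct the surjection onto $\F\ovG_n$ and bound $\dim_\F$ of the abstract algebra from above by $|\ovG_n|$. The difference is in how the bound is obtained. After writing the algebra as $\F G_n\cdot\F\langle\va_1,\ldots,\va_n\rangle$ and spanning the second factor by monomials $\va_{i_1}\cdots\va_{i_t}$, the paper reduces by conjugation to the case $\{i_1,\ldots,i_t\}=\{1,\ldots,t\}$ and then observes $\be\, G_t\,\va_1\cdots\va_t=\be\,\va_1\cdots\va_t$ for all $\be\in G_n$ --- exactly your absorption phenomenon --- so that $\dim\F G_n\va_1\cdots\va_t\leq[G_n:G_t]$, and sums the resulting bound. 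You instead pick a canonical coset representative (the order-preserving extension of the partial permutation, with identity entries filled in) and argue the resulting normal forms span. The extra work this costs you --- the induction showing $\va_K\tau=\va_K$ for $\tau$ supported on a possibly non-consecutive $K$, by conjugating the absorption relation through words in the $s_j$'s --- is precisely what the paper shortcuts with its ``WLOG $i_l=l$'' reduction; the underlying mechanism is the same derived identity $\sigma\va_j=\va_{\sigma(j)}\sigma$ applied to $s_i\va_i\va_{i+1}=\va_i\va_{i+1}$. What your version buys is a concrete $\F$-basis of $\F\ovG_n$ realized as reduced words in the generators, which the paper's coset count does not explicitly produce. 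One small caution: when you absorb a transposition $(a,b)$ into $\va_K$, it is cleanest to first factor $\va_K=\va_{K\setminus\{a,b\}}\,\va_a\va_b$ and prove $(a,b)\va_a\va_b=\va_a\va_b$ by conjugating only the two-factor block, so the remaining $\va_j$'s for $j\in K\setminus\{a,b\}$ are not dragged through the conjugating word.
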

\begin{proof} Let us denote by $\mathcal{A}(n,G)$ the abstract
algebra in the proposition.  Note
that~(\ref{reln:braid})-(\ref{reln:gp.va}) hold in $\F\ovG_n$. So
there exists a surjective algebra homomorphism from
$\mathcal{A}(n,G)$ to $\F\ovG_n$. Then it suffices to show the
dimension of $\mathcal{A}(n,G)$ is no bigger than that of
$\F\ovG_n$. On the one hand, we have
\begin{eqnarray}\label{eqn:dim.FovGn}
\text{dim}_{\F} \F\ovG_n=|\ovG_n|=\sum_{t=0}^{n}\left(
\begin{array}{l}
n \\
t
\end{array}
\right)^2 t!|G|^t.
\end{eqnarray}
On the other hand, it follows from the
relations~(\ref{reln:symm.va.})~and~(\ref{reln:gp.va}) that
$$
\mathcal{A}(n,G)=\mathbb{F}G_n\cdot\mathbb{F}\langle\va_1,\ldots,\va_n\rangle,
$$
where $\mathbb{F}\langle\va_1,\ldots,\va_n\rangle$ is the
subalgebra of $\mathcal{A}(n,G)$ generated by
$\va_1,\ldots,\va_n$. Observe that relation~(\ref{reln:va})
implies that $\mathbb{F}\langle\va_1,\ldots,\va_n\rangle$ is
spanned by the monomials $\va_{i_1}\cdots\va_{i_t}$ with
$i_1<\cdots<i_t$ and $0\leq t\leq n$. Next we shall estimate the
dimension of the subspace $\mathbb{F}G_n\va_{i_1}\cdots\va_{i_t}$.
Using~(\ref{eqn:semigprel}) we may assume  $i_l=l$ for each $1\leq
l\leq t$ without loss of generality. By
~(\ref{reln:symm.va.})~and~(\ref{reln:gp.va}) we have in
$\mathcal{A}(n,G)$
$$
\be G_t\va_1\cdots\va_t=\be \va_1\cdots\va_t, \quad \forall \be\in
G_n.
$$
Here $G_t$ is regarded as a subgroup of $G_n$. Hence the dimension
of the subspace $\mathbb{F}G_n\va_1\cdots\va_t$ is no bigger than
the number of left cosets of $G_t$ in $G_n$. Therefore,
$\text{dim}_{\mathbb{F}}\mathcal{A}(n,G)$ does not exceed
$$
\sum_{t=0}^n\left(
\begin{array}{l}
n \\
t
\end{array}
\right)\frac{n!}{t!}|G|^{n-t}
$$
which coincides with~(\ref{eqn:dim.FovGn}).
\end{proof}

\begin{cor}\label{map:retraction} The mapping
\begin{align}
\Phi: \F\ovG_n&\rightarrow \mathbb{F}G_n,\\
 g\mapsto g, s_i\mapsto
(i,i+1), \va_j&\mapsto 0,\quad \forall g\in G^n, 1\leq i\leq n-1,
1\leq j\leq n\notag
\end{align}
is an algebra homomorphism, which is an extension of the identity
map of the subalgebra $\mathbb{F}G_n$.
\end{cor}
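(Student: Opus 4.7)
The plan is to read this off from the presentation established in Proposition~\ref{prop:pre. of ovGn}. Since $\F\ovG_n$ is generated by the $g\in G^n$, the $s_i$, and the $\va_j$, modulo the relations (\ref{reln:braid})--(\ref{reln:gp.va}), it suffices to define $\Phi$ on these generators by the formulas given and to check that the proposed images satisfy every defining relation inside $\F G_n$. The universal property then promotes $\Phi$ to a unique algebra homomorphism on all of $\F\ovG_n$.

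First I would verify the relations that do \emph{not} involve any $\va_j$. The image of $s_i$ is the transposition $(i,i+1)\in S_n\subset G_n$, so the braid relations (\ref{reln:braid}) are automatic. Relation (\ref{reln:symm.gp}) also holds automatically in $\F G_n$, since the semidirect product structure of $G_n=G^n\rtimes S_n$ gives exactly $(i,i+1)\cdot g=({}^{s_i}g)\cdot(i,i+1)$ for any $g\in G^n$.

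Next I would dispatch the relations that involve at least one $\va_j$. Under $\Phi$ each such $\va_j$ is sent to $0\in\F G_n$, so every such relation collapses to the trivial identity $0=0$: idempotency and commutation in (\ref{reln:va}) become $0\cdot 0 = 0$ and $0\cdot 0 = 0\cdot 0$; the relations (\ref{reln:symm.va.}) involving transpositions become $(i,i+1)\cdot 0=0\cdot(i,i+1)$ and $(i,i+1)\cdot 0\cdot 0=0\cdot 0$; and the two identities in (\ref{reln:gp.va}) become $0\cdot g=g\cdot 0$ and $0\cdot h^{(i)}=h^{(i)}\cdot 0=0$, all of which hold in $\F G_n$.

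Hence all defining relations of $\F\ovG_n$ are satisfied in $\F G_n$, so $\Phi$ extends to an algebra homomorphism $\F\ovG_n\to\F G_n$. That $\Phi$ restricts to the identity on the subalgebra $\F G_n$ is immediate from the definition on the generators $g\in G^n$ and $s_i$, which generate $G_n$ and are fixed by $\Phi$. There is no real obstacle here: the presentation from Proposition~\ref{prop:pre. of ovGn} does all the work, and the only thing to be mildly careful about is that the asymmetric relation $\va_i h^{(i)}=h^{(i)}\va_i=\va_i$ specializes correctly, which it plainly does since both sides send to $0$.
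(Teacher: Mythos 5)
Your proof is correct and follows exactly the route the paper intends: the Corollary is stated as an immediate consequence of the presentation in Proposition~\ref{prop:pre. of ovGn}, and your relation-by-relation verification (trivial for the $\va$-free relations, collapse to $0=0$ for the rest) is precisely the check that justifies it. Nothing further is needed.
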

%

\subsection{The construction of the limit centralizer algebras $\overline{\mZ}_m$.}
Note that the canonical embedding $\phi$ in the case $m=n-1$ in
Definition~\ref{defn:proj.inj.} can be extended to algebra
embeddings $\F\ovG_{n-1}\hookrightarrow \F\ovG_n$ by linearity. In
the remainder of the paper, the embedding induced by $\phi$ will
be referred to when an element in $\F\ovG_{n-1} (\text{resp. }
\ovG_{n-1})$ is regarded as belonging to $\F\ovG_n (\text{resp. }
\ovG_n)$.

We shall denote by $\theta_n$ the canonical projection
$\ovG_n\rightarrow \overline{G}_{n-1}$ in
Definition~\ref{defn:proj.inj.}(1). We extend $\theta_n$ to a
linear map $\F\ovG_n\rightarrow \mathbb{F}\overline{G}_{n-1}$,
still denoted by $\theta_n$, which is not an algebra homomorphism.

For any $0\leq m\leq n$, denote by $\ovG_{n-m}^{\prime}$  the
subsemigroup of $\ovG_n$ which consists of the matrices with first
$m$ diagonal entries equal to 1.
Set the centralizer of $\ovG_{n-m}^{\prime}$ in $\F\ovG_n$ to be
$$
\overline{\mZ}_{m,n}=\F\ovG_n^{\ovG_{n-m}^{\prime}}.
$$
%
Similarly, let us set the centralizer of $G_{n-m}^{\prime}$ in $\F
G_n$ to be
$$
\mZ_{m,n}=\F G_n^{G_{n-m}^{\prime}}.
$$
In particular, $\overline{\mZ}_{0,n}$ and $\mZ_{0,n}$ are the
centers of $\F\ovG_n$ and of $\F G_n$, respectively.
Observe that the algebra homomorphism $\Phi:\F\ovG_n\rightarrow\F
G_n$ in Corollary~\ref{map:retraction} maps the subalgebra
$\F\ovG_{n-m}^{\prime}$ to $\F G_{n-m}^{\prime}$ and hence we have the
following lemma.
\begin{lem}\label{lem:ovmZmn to Zmn} The algebra homomorphism
$\Phi:\F\ovG_n\rightarrow\F G_n$ in Corollary~\ref{map:retraction}
maps $\overline{\mZ}_{m,n}$ onto $\mZ_{m,n}$.
\end{lem}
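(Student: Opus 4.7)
The plan is to establish both containment and surjectivity. Containment $\Phi(\overline{\mZ}_{m,n})\subseteq\mZ_{m,n}$ is immediate: since $G_{n-m}^{\prime}\subseteq\ovG_{n-m}^{\prime}$ and Corollary~\ref{map:retraction} shows that $\Phi$ restricts to the identity on $\F G_{n-m}^{\prime}$, for any $w\in\overline{\mZ}_{m,n}$ and $g\in G_{n-m}^{\prime}$ one has $\Phi(w)g=\Phi(wg)=\Phi(gw)=g\Phi(w)$, so $\Phi(w)\in\mZ_{m,n}$.

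For surjectivity, the key device will be the idempotent
$$
e:=\prod_{i=m+1}^{n}(1-\va_i)\in\F\ovG_n,
$$
together with the natural algebra inclusion $\iota:\F G_n\hookrightarrow\F\ovG_n$, so that $\Phi\circ\iota=\mathrm{id}_{\F G_n}$. The first task is to record four properties of $e$, read off directly from the relations in Proposition~\ref{prop:pre. of ovGn}: (i)~$e$ is idempotent, since the factors $1-\va_i$ commute and each is idempotent; (ii)~$\Phi(e)=1$, because $\Phi(\va_i)=0$; (iii)~$e\va_j=\va_j e=0$ for every $j>m$, from $(1-\va_j)\va_j=0$; and (iv)~$ge=eg$ for every $g=(g',\sigma)\in G_{n-m}^{\prime}$, since relations~(\ref{reln:symm.va.}) and~(\ref{reln:gp.va}) give $g\va_ig^{-1}=\va_{\sigma(i)}$, while $\sigma$ fixes $\{1,\ldots,m\}$ and hence permutes $\{m+1,\ldots,n\}$, so conjugation by $g$ only permutes the factors of $e$.

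Given $z\in\mZ_{m,n}$, I would set $\tilde z:=e\,\iota(z)\,e$. Property~(ii) yields $\Phi(\tilde z)=\Phi(e)\cdot z\cdot\Phi(e)=z$, so $\tilde z$ is a preimage of $z$. Since $\ovG_{n-m}^{\prime}$ is generated as a semigroup by $G_{n-m}^{\prime}$ together with $\va_{m+1},\ldots,\va_n$, it suffices to check commutation of $\tilde z$ with these generators: for $g\in G_{n-m}^{\prime}$, this follows by sandwiching, using that $\iota(z)$ commutes with $g$ (because $z\in\mZ_{m,n}$) and both copies of $e$ commute with $g$ by~(iv); for $\va_j$ with $j>m$, both $\tilde z\va_j$ and $\va_j\tilde z$ equal zero by~(iii). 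Hence $\tilde z\in\overline{\mZ}_{m,n}$ and $\Phi(\tilde z)=z$, giving surjectivity.

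The only substantive technical point is the verification of the $G_{n-m}^{\prime}$-invariance of $e$ in~(iv); everything else is a formal consequence of the relations in Proposition~\ref{prop:pre. of ovGn}. The construction of $\tilde z$ is essentially forced, since one needs an element of $\F\ovG_n$ that maps to $1$ under $\Phi$, is annihilated by each $\va_j$ with $j>m$, and is fixed by $G_{n-m}^{\prime}$, and the product of complementary idempotents $1-\va_j$ is the natural such choice.
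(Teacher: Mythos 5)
Your proof is correct, and it supplies real content where the paper is terse. The paper disposes of this lemma with a single observation---that $\Phi$ carries $\F\ovG_{n-m}^{\prime}$ into $\F G_{n-m}^{\prime}$---which gives the containment $\Phi(\overline{\mZ}_{m,n})\subseteq\mZ_{m,n}$ via essentially the commutation calculation you also write down, but it does not by itself establish the ``onto'' direction. Your explicit preimage $e\,\iota(z)\,e$, with $e=\prod_{i=m+1}^{n}(1-\va_i)$, is exactly the right device and your verification of the four properties of $e$ is complete: in particular property~(iv) is correct because $\sigma\va_i\sigma^{-1}=\va_{\sigma(i)}$ for $\sigma\in S_n$ and every $\sigma$ coming from $G_{n-m}^{\prime}$ stabilizes $\{m+1,\ldots,n\}$, so conjugation merely permutes the commuting factors $1-\va_i$. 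This is also consistent with the paper's own toolkit---the same product $(1-\va_{m+1})\cdots(1-\va_n)$ is used later, e.g.\ in Lemma~\ref{lem:Im(n,G)}, to describe $I(n,G)\cap\overline{\mZ}_{m,n}$---so you have made explicit the surjectivity step that the paper leaves implicit rather than taking a fundamentally different route.
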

Let us denote by $\mathbb{Z}_+$ the set of nonnegative integers.
Next we shall introduce the limit centralizer algebras
$\overline{\mZ}_m$ for $m\in\mathbb{Z}_+$.
\begin{lem}\label{lem: hom.theta}
 The restriction of the linear map $\theta_n:\F\ovG_{n}\rightarrow\F\ovG_{n-1}$
 to $\overline{\mZ}_{n-1,n}\subseteq \F\ovG_n$
defines an algebra homomorphism
$\theta_n:\overline{\mZ}_{n-1,n}\rightarrow \F\ovG_{n-1}$.
Moreover,
$$
\theta_n(\overline{\mZ}_{m,n})\subseteq \overline{\mZ}_{m,n-1}.
$$
\end{lem}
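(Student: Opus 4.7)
The plan is to reduce the statement to a Peirce decomposition with respect to the idempotent $\va_n$. Note that $\va_n$ lies in $\ovG_1^{\prime}$ and hence in $\ovG_{n-m}^{\prime}$ for every $0\leq m\leq n-1$, so any $x\in \overline{\mZ}_{n-1,n}$ commutes with both $\va_n$ and its orthogonal complement $e_n := 1 - \va_n$ (the $n\times n$ matrix with a single $1$ at entry $(n,n)$). Since $\va_n e_n = e_n \va_n = 0$, this yields the orthogonal decomposition
$$x = \va_n x \va_n + e_n x e_n =: \bar x + \tilde x.$$

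First I would verify by a direct matrix computation two key identities: for $\ga \in \ovG_n$,
$$\va_n \ga \va_n = \phi\bigl(\theta_n(\ga)\bigr)\,\va_n,$$
and $e_n \ga e_n$ has all entries zero outside position $(n,n)$. Consequently $\theta_n$ vanishes on $e_n \F\ovG_n e_n$, so $\theta_n(x) = \theta_n(\bar x)$, while on $\va_n \F\ovG_n \va_n$ the map $\theta_n$ is inverse to $w \mapsto \phi(w)\va_n$. I would also check that $\phi$ is an algebra embedding whose image commutes with $\va_n$, which is immediate from the fact that $\phi(v)$ has a $1$ at $(n,n)$ and zeros elsewhere in the last row and column.

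For multiplicativity, expanding $xy = (\bar x + \tilde x)(\bar y + \tilde y)$ and using $\va_n e_n = 0$ kills the two cross terms, leaving $xy = \bar x \bar y + \tilde x \tilde y$. Since $\tilde x \tilde y \in e_n \F\ovG_n e_n$ we have $\theta_n(\tilde x \tilde y) = 0$, while
$$\bar x \bar y = \phi(\theta_n(x))\va_n\,\phi(\theta_n(y))\va_n = \phi\bigl(\theta_n(x)\theta_n(y)\bigr)\,\va_n$$
using that $\phi$ is multiplicative and commutes with $\va_n$. Applying $\theta_n$ therefore gives $\theta_n(xy) = \theta_n(x)\theta_n(y)$, and together with $\theta_n(1) = 1$ this establishes that $\theta_n$ restricted to $\overline{\mZ}_{n-1,n}$ is a unital algebra homomorphism into $\F\ovG_{n-1}$.

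Finally, for the inclusion $\theta_n(\overline{\mZ}_{m,n}) \subseteq \overline{\mZ}_{m,n-1}$, I would note that $\phi(\F\ovG_{n-1}) \subseteq \overline{\mZ}_{n-1,n}$ (each $\phi(v)$ commutes with $\va_n$ as above and with every $h^{(n)}$ since $\phi(v)$ has a $1$ at $(n,n)$), and that $\phi(\ovG_{n-1-m}^{\prime}) \subseteq \ovG_{n-m}^{\prime}$ because $\phi$ preserves the first $m$ fixed diagonal entries and places a $1$ at position $n$. Hence for $x \in \overline{\mZ}_{m,n}$ and $u \in \ovG_{n-1-m}^{\prime}$, the element $\phi(u)$ commutes with $x$ in $\F\ovG_n$, and both lie in $\overline{\mZ}_{n-1,n}$; applying the homomorphism $\theta_n$ then gives $u\,\theta_n(x) = \theta_n(\phi(u)x) = \theta_n(x\phi(u)) = \theta_n(x)\,u$, as required. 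The main obstacle is really just the bookkeeping identity $\va_n\ga\va_n = \phi(\theta_n(\ga))\va_n$ together with the Peirce decomposition; once these are in hand, everything else is formal manipulation.
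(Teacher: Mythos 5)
Your argument is essentially the paper's, repackaged through a Peirce decomposition with respect to the idempotent $\va_n$; the core observations (that $\theta_n(\ga)$ corresponds to $\va_n\ga\va_n$ after the natural identification, that elements of $\overline{\mZ}_{n-1,n}$ commute with $\va_n$, and that $\va_n$ is idempotent) are identical. The paper simply compresses the multiplicativity step to one line: $\theta_n(xy)=\va_n xy\va_n=\va_n x\va_n\cdot\va_n y\va_n=\theta_n(x)\theta_n(y)$, without ever introducing the complementary idempotent. For the inclusion $\theta_n(\overline{\mZ}_{m,n})\subseteq\overline{\mZ}_{m,n-1}$ you spell out what the paper leaves as a ``direct computation,'' and that part is correct.

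There is, however, a genuine error in your setup of $e_n$. You set $e_n:=1-\va_n$ and then identify it parenthetically with ``the $n\times n$ matrix with a single $1$ at entry $(n,n)$.'' These are \emph{different} elements of $\F\ovG_n$. In the semigroup algebra, $1-\va_n$ is a formal $\F$-linear combination of the two distinct basis elements $1=I_n$ and $\va_n$, whereas the matrix with a single $1$ at $(n,n)$ is the basis element $\va_1\cdots\va_{n-1}\in\ovG_n$. Since $\F\ovG_n$ is the free vector space on the semigroup $\ovG_n$ (it is not a matrix algebra), these do not coincide, and $\va_n+\va_1\cdots\va_{n-1}\neq 1$ in $\F\ovG_n$. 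Consequently your ``direct matrix computation'' that ``$e_n\ga e_n$ has all entries zero outside position $(n,n)$'' is not meaningful for $e_n=1-\va_n$, since $e_n\ga e_n=\ga-\va_n\ga-\ga\va_n+\va_n\ga\va_n$ is a linear combination of four distinct basis elements, not a single matrix. Under the alternative reading $e_n=\va_1\cdots\va_{n-1}$, the Peirce decomposition $x=\va_n x\va_n+e_n x e_n$ fails, and moreover the claim that $\theta_n$ vanishes on $e_n\F\ovG_n e_n$ would also be false: $\theta_n(e_n\ga e_n)$ is the $(n-1)\times(n-1)$ zero matrix, which is a \emph{nonzero} basis element of $\F\ovG_{n-1}$.

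The fix is routine but necessary: keep $e_n=1-\va_n$ as a formal combination, and prove that $\theta_n$ annihilates $(1-\va_n)\F\ovG_n(1-\va_n)$ directly from linearity. For each $\ga\in\ovG_n$, deleting the $n$th row and/or $n$th column does not change the upper-left $(n-1)\times(n-1)$ corner, so $\theta_n(\va_n\ga)=\theta_n(\ga\va_n)=\theta_n(\va_n\ga\va_n)=\theta_n(\ga)$, whence $\theta_n((1-\va_n)\ga(1-\va_n))=\theta_n(\ga)-\theta_n(\ga)-\theta_n(\ga)+\theta_n(\ga)=0$. With this replacement for your ``matrix computation,'' the rest of the argument goes through.
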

\begin{proof}
Observe that  $\theta_n(x)$ coincides with $\va_nx\va_n$ for each
$x\in \F\ovG_n$ if we identify the algebra $\F\ovG_{n-1}$ with the
subalgebra of $\F\ovG_n$ spanned by  $\ga\in\ovG_n$ whose $n$th
row and column are zero. Note that any $x\in
\overline{\mZ}_{n-1,n}=\F\ovG_n^{\ovG_{1}^{\prime}}$ commutes with
$\va_n$ since $\va_n$ is contained in $\ovG_{1}^{\prime}$. Hence,
if $x,y\in \overline{\mZ}_{n-1,n}$, using the fact that $\va_n$ is
an idempotent, then
$$
\theta_n(xy)=\va_nxy\va_n=\va_nx\va_n\va_ny\va_n=\theta_n(x)\theta_n(y).
$$
The second claim can be checked by a direct computation using the
identification of $\F\ovG_{n-1}$ with the subalgebra of $\F\ovG_n$
spanned by $\ga\in\ovG_n$ whose $n$th row and column are zero.
%
\end{proof}
%
%
For each positive integer $n$, set
$\mathbb{N}_n=\{1,2,\ldots,n\}$.

\begin{defn}
 For
$\ga\in\ovG_n$, set
\begin{align}
J(\ga)&=\{k|~1\leq k\leq n, \ga_{kk}\neq 1\},\notag\\
\bar{J}(\ga)&=\{k|~1\leq k\leq n, \ga_{kk}=1\}=\mathbb{N}_n\backslash J(\ga),\label{defn:barJ}\\
\text{deg}(\ga)&=|J(\ga)|,\label{defn: degree}\\
\text{rank}(\ga)&=\{l|~1\leq l\leq n, \ga_{kl}\neq 0 \text{ for
some }1\leq k\leq n\}\label{defn:rank}.
\end{align}
\end{defn}
\begin{rem}
Recall that each element $(g,\sigma)\in G_n$ with
$g=(g_1,\ldots,g_n)\in G^n$ and $\sigma\in S_n$ can be identified
with the matrix $\Delta(g)M(\sigma)$, where $\Delta(g)$ is the
diagonal matrix with $\Delta(g)_{ii}=g_i$ and the matrix
$M(\sigma)$ is as follows:
\begin{eqnarray*}
M(\sigma)_{ij}=\left\{
 \begin{array}{ll}
 1, & \text{ if }\sigma(j)=i,\\
 0, &\text{ otherwise}.
 \end{array}
 \right.
\end{eqnarray*}
Then we have $J((g,\sigma))=\{j\in\mathbb{N}_n|~\sigma(j)\neq j
\text{ or } g_j\neq 1\}$, which is introduced in \cite[Subsection
2.2]{W2}.
\end{rem}
Note that each $\tau\in \F\ovG_n$ can be written as a linear
combination $\tau=\sum_{\ga\in\ovG_n}\tau_{\ga}\ga$ with
$\tau_{\ga}\in\mathbb{F}$, then we set
\begin{equation}\label{defn: degree in AnG}
\text{deg}(\tau)=\text{max}\{\text{deg}(\ga)|~\tau_{\ga}\neq 0\}.
\end{equation}
\begin{lem}\label{lem:degree}
Let $\ga, \ga^{\prime}\in\ovG_n$. Then
$\text{deg}(\ga\ga^{\prime})\leq
\text{deg}(\ga)+\text{deg}(\ga^{\prime})$.
\end{lem}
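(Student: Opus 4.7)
The plan is to reformulate the inequality in terms of the complement set $\bar{J}(\gamma)$ defined in (\ref{defn:barJ}), which converts a statement about upper bounds on $|J|$ into a lower bound on $|\bar{J}|$ that is amenable to a direct set-theoretic argument. Since $\deg(\gamma) = n - |\bar{J}(\gamma)|$, the desired inequality $\deg(\gamma\gamma') \leq \deg(\gamma) + \deg(\gamma')$ is equivalent to
\[
|\bar{J}(\gamma\gamma')| \geq |\bar{J}(\gamma)| + |\bar{J}(\gamma')| - n.
\]

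The first step is to establish the key containment
\[
\bar{J}(\gamma) \cap \bar{J}(\gamma') \subseteq \bar{J}(\gamma\gamma').
\]
Suppose $k \in \bar{J}(\gamma) \cap \bar{J}(\gamma')$, so $\gamma_{kk} = \gamma'_{kk} = 1$. Because $\gamma \in \overline{G}_n$ has at most one nonzero entry in each row and in each column, the fact that $\gamma_{kk} = 1 \neq 0$ forces $\gamma_{kj} = 0$ for every $j \neq k$; similarly $\gamma'_{jk} = 0$ for every $j \neq k$. Computing the $(k,k)$-entry of the matrix product under the convention (\ref{multcon}) gives
\[
(\gamma\gamma')_{kk} = \sum_{j=1}^{n} \gamma_{kj}\gamma'_{jk} = \gamma_{kk}\gamma'_{kk} = 1,
\]
so $k \in \bar{J}(\gamma\gamma')$.

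The second step is elementary counting: by inclusion-exclusion applied inside $\mathbb{N}_n$,
\[
|\bar{J}(\gamma) \cap \bar{J}(\gamma')| \geq |\bar{J}(\gamma)| + |\bar{J}(\gamma')| - n,
\]
and combining with the containment above yields the displayed lower bound on $|\bar{J}(\gamma\gamma')|$. Rewriting each $|\bar{J}|$ as $n - \deg$ gives exactly $\deg(\gamma\gamma') \leq \deg(\gamma) + \deg(\gamma')$.

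The argument has no real obstacle; the only point requiring a little care is checking that the matrix multiplication makes sense under the convention $g\cdot 0 = 0\cdot g = 0$ and $g + 0 = g$ from (\ref{multcon}), so that the diagonal entry $(\gamma\gamma')_{kk}$ collapses to the single surviving product $\gamma_{kk}\gamma'_{kk}$. This is bookkeeping rather than a substantive difficulty.
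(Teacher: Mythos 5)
Your proof is correct and uses the same key observation as the paper, namely that $\bar{J}(\ga)\cap\bar{J}(\ga')\subseteq\bar{J}(\ga\ga')$; your inclusion-exclusion step on complements is just an arithmetic restatement of the paper's equivalent conclusion $J(\ga\ga')\subseteq J(\ga)\cup J(\ga')$. You spell out the computation $(\ga\ga')_{kk}=\ga_{kk}\ga'_{kk}$ in a bit more detail than the paper, which simply asserts it, but there is no substantive difference.
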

\begin{proof}
Note that for each $1\leq k\leq n$, if
$\ga_{kk}=1=\ga^{\prime}_{kk}$, then $(\ga\ga^{\prime})_{kk}=1$.
Hence $\bar{J}(\ga)\cap \bar{J}(\ga^{\prime})\subseteq
\bar{J}(\ga\ga^{\prime})$, and therefore
$J(\ga\ga^{\prime})\subseteq J(\ga)\cup J(\ga^{\prime})$.
\end{proof}

For each $0\leq k\leq n$, let $(\F\ovG_n)^k$ be the subspace of
$\F\ovG_n$ spanned by the subset
$\{\ga|~\ga\in\ovG_n,\text{deg}(\ga)\leq k\}$. Then we obtain a
filtration of the space $\F\ovG_n$:
$$
\mathbb{F}=(\F\ovG_n)^0\subseteq
(\F\ovG_n)^1\subseteq\ldots\subseteq (\F\ovG_n)^n=\F\ovG_n.
$$
Lemma~\ref{lem:degree} implies that this filtration is compatible
with the algebra structure of $\F\ovG_n$. Clearly there exists a
filtration on each centralizer $\overline{\mZ}_{m,n}$ inherited
from $\F\ovG_n$. For each $0\leq k\leq n$, denote by
$\overline{\mZ}_{m,n}^k$ the $k$th term of the filtration on
$\overline{\mZ}_{m,n}$. Note that for $\ga\in\ovG_n$ the degree of
$\theta_n(\ga)$ can be equal either to $\text{deg}(\ga)$ or
$\text{deg}(\ga)-1$. Therefore the homomorphism $\theta_n$ in
Lemma~\ref{lem: hom.theta} is compatible with the filtration on
$\F\ovG_n$. This implies
$\theta_n(\overline{\mZ}_{m,n}^k)\subseteq
\overline{\mZ}_{m,n-1}^k$ for $0\leq k\leq n$.
\begin{defn}\label{defn:Am(G)}
For $m=0,1,2,\ldots$, let $\overline{\mZ}_{m}$ be the projective
limit of the sequence
$$
{}\cdots\rightarrow \overline{\mZ}_{m,n}\rightarrow
\overline{\mZ}_{m,n-1}\rightarrow \cdots\rightarrow
\overline{\mZ}_{m,m+1}\rightarrow \overline{\mZ}_{m,m}=\F\ovG_m
$$
taken in the category of filtered algebras. That is, an element
$\alpha\in \overline{\mZ}_{m}$ is a sequence $\{\alpha_n|~n\geq
m\}$ such that
$$
\alpha_n\in \overline{\mZ}_{m,n},\quad
\theta_n(\alpha_n)=\alpha_{n-1}, \quad
\text{deg}(\alpha)=\text{sup}_{n\geq m}\text{deg}(\alpha_n)<\infty
$$
with the componentwise operations.
\end{defn}
Denote by $\theta^n$ the canonical projection
$$\theta^n:\overline{\mZ}_{m}\rightarrow \overline{\mZ}_{m,n},\qquad \alpha=(\alpha_n|n\geq m)\mapsto\alpha_n.$$
The $k$th term of the filtered algebras $\overline{\mZ}_{m}$ will
be denoted by $\overline{\mZ}_{m}^k$.
\begin{lem}\label{lem:ovmZm to FovGm}
For each $m\in\{0,1,2,\ldots\}$, there exists an embedding $
\F\ovG_m \hookrightarrow\overline{\mZ}_{m}$ whose image consists
of constant sequences.
\end{lem}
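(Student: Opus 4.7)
The plan is to define the embedding by sending $\alpha \in \F\ovG_m$ to the constant sequence obtained from the canonical embeddings $\phi: \F\ovG_m \hookrightarrow \F\ovG_n$ of Definition~\ref{defn:proj.inj.}(2), one for each $n \geq m$. Explicitly, set $\iota(\alpha) = (\phi(\alpha))_{n \geq m}$, with the convention that $\phi$ is the identity when $n = m$. One then must verify that $\iota$ lands in $\overline{\mZ}_m$, is an algebra homomorphism, and is injective.

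The landing in $\overline{\mZ}_m$ breaks into three verifications, each of which I would perform first on a basis element $\gamma \in \ovG_m$ and then extend by linearity. (i) $\phi(\gamma)$ centralizes $\ovG_{n-m}^{\prime}$: this is immediate because $\phi(\gamma)$ is block-diagonal with $\gamma$ occupying the top-left $m \times m$ block and the identity occupying the bottom-right $(n-m) \times (n-m)$ block, while every element of $\ovG_{n-m}^{\prime}$ has exactly the complementary block-diagonal shape, so the two commute under matrix multiplication. (ii) The compatibility $\theta_n(\phi(\gamma)) = \phi(\gamma)$ (viewed as an element of $\ovG_{n-1}$) holds because $\theta_n$ simply truncates the last row and column of $\phi(\gamma) \in \ovG_n$, which merely removes a trailing identity entry and recovers the canonical embedding of $\gamma$ into $\ovG_{n-1}$. (iii) The degree is uniformly bounded: since all extra diagonal entries of $\phi(\gamma)$ equal $1 \in G$ they contribute nothing to $J$, so $J(\phi(\gamma)) = J(\gamma)$ and hence $\text{deg}(\iota(\alpha)) = \text{deg}(\alpha) \leq m$, well within the filtered projective limit of Definition~\ref{defn:Am(G)}.

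Finally, each $\phi: \F\ovG_m \hookrightarrow \F\ovG_n$ is an algebra homomorphism because it is induced by an injective semigroup homomorphism $\ovG_m \hookrightarrow \ovG_n$, and the operations on $\overline{\mZ}_m$ are componentwise, so $\iota$ is an algebra homomorphism. Injectivity of $\iota$ is immediate from the first component, since $\iota(\alpha)_m = \alpha$. There is no real obstacle in this argument; the only point requiring any attention is the degree bound needed for membership in the filtered projective limit, which is settled by the direct observation in (iii).
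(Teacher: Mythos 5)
Your proof is correct and takes essentially the same approach as the paper: the key observation in both is that the block-diagonal form of $\phi(\gamma)$ (with $\gamma$ in the top-left $m\times m$ block and identity elsewhere) commutes with the complementary block-diagonal form of $\ovG_{n-m}^{\prime}$. The paper's proof consists only of this one observation plus a one-line conclusion, while you additionally spell out the compatibility $\theta_n\circ\phi = \phi$, the uniform degree bound $\deg(\phi(\alpha))=\deg(\alpha)\leq m$, and the injectivity, all of which are left implicit in the paper; your verifications are accurate.
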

\begin{proof}
Observe that for each $n\geq m$, $\ovG_m$ commutes with
$\ovG_{n-m}^{\prime}$ in $\ovG_n$ if we identify $\ovG_m$ with its
image in $\ovG_n$ under the canonical embedding induced by $\phi$
in Definition~\ref{defn:proj.inj.}. Hence the mapping $\tau\mapsto
\alpha=(\alpha_n|n\geq m)$ with $\alpha_n=\tau$ defines an algebra
embedding.
\end{proof}

Since $\overline{\mZ}_{m,n}\subseteq \overline{\mZ}_{m+1,n}$, we
have natural injective algebra homomorphisms
$\overline{\mZ}_m\hookrightarrow \overline{\mZ}_{m+1}$ defined by
$ (\alpha_n|n\geq m)\mapsto (\alpha_n|n\geq m+1).$ Let us define
$\overline{\mZ}$ to be the direct limit (the union) of the
algebras $\overline{\mZ}_m$ with respect to the embeddings
$\overline{\mZ}_m\hookrightarrow \overline{\mZ}_{m+1}$.
\begin{prop}
The center of the algebra $\overline{\mZ}$ coincides with
$\overline{\mZ}_0$.
\end{prop}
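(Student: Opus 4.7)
The plan is to prove the two inclusions $\overline{\mZ}_0 \subseteq Z(\overline{\mZ})$ and $Z(\overline{\mZ}) \subseteq \overline{\mZ}_0$ separately. The first is straightforward: any $\alpha = (\alpha_n \mid n \geq 0) \in \overline{\mZ}_0$ has every component in the center of $\F\ovG_n$, and since multiplication in each $\overline{\mZ}_m$ is defined componentwise, $\alpha$ commutes with every element of every $\overline{\mZ}_m$ and hence with every element of the union $\overline{\mZ}$.

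For the reverse inclusion I will use a constant-sequence trick. Suppose $\alpha$ is represented by $(\alpha_n \mid n \geq m) \in \overline{\mZ}_m$ and lies in $Z(\overline{\mZ})$. The key claim is that $\alpha_n \in Z(\F\ovG_n)$ for each $n \geq m$. To prove this, fix such an $n$ and an arbitrary $\beta \in \F\ovG_n$; by Lemma~\ref{lem:ovmZm to FovGm} applied with index $n$, the constant sequence $\tilde{\beta} = (\beta, \beta, \ldots)$ defines an element of $\overline{\mZ}_n \subseteq \overline{\mZ}$. Both $\alpha$ and $\tilde{\beta}$ then sit inside the common algebra $\overline{\mZ}_n$ via the truncation embedding $\overline{\mZ}_m \hookrightarrow \overline{\mZ}_n$, where the product is componentwise. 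The identity $\alpha\tilde{\beta} = \tilde{\beta}\alpha$, read off at coordinate $n$, yields $\alpha_n\beta = \beta\alpha_n$ in $\F\ovG_n$; since $\beta$ was arbitrary, $\alpha_n$ is central.

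To finish, I will extend the sequence backwards by setting $\alpha_k := \theta_{k+1}(\alpha_{k+1})$ for $k = m-1, \ldots, 0$. Using the identification $\theta_n(x) = \va_n x \va_n$ from the proof of Lemma~\ref{lem: hom.theta} together with the fact that every $\sigma \in \F\ovG_{n-1} \subseteq \F\ovG_n$ commutes with the idempotent $\va_n$, a short calculation
\[
\theta_n(\alpha_n)\,\sigma = \va_n\alpha_n\va_n\sigma = \va_n\alpha_n\sigma\va_n = \va_n\sigma\alpha_n\va_n = \sigma\,\theta_n(\alpha_n)
\]
shows that $\theta_n$ carries $Z(\F\ovG_n)$ into $Z(\F\ovG_{n-1})$, so each newly defined $\alpha_k$ is also central. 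The degree estimate $\deg(\theta_n(\alpha_n)) \leq \deg(\alpha_n)$ keeps the extended sequence within the required degree bound, so it indeed belongs to $\overline{\mZ}_0$ and represents $\alpha$ under the embedding $\overline{\mZ}_0 \hookrightarrow \overline{\mZ}$. The main obstacle I anticipate is the bookkeeping in the middle paragraph: verifying that the truncation embedding $\overline{\mZ}_m \hookrightarrow \overline{\mZ}_n$ and the constant-sequence embedding $\F\ovG_n \hookrightarrow \overline{\mZ}_n$ interact correctly so that the $n$-th coordinate of the centrality relation genuinely lands in $\F\ovG_n$ rather than in some larger $\F\ovG_N$. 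Once this identification is secured the remainder is essentially mechanical.
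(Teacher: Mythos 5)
Your argument is correct and follows essentially the same route as the paper: the easy inclusion via componentwise centrality, and the reverse inclusion via the constant-sequence embeddings of Lemma~\ref{lem:ovmZm to FovGm}. The one place you add something the paper leaves implicit is the backward extension $\alpha_k := \theta_{k+1}(\alpha_{k+1})$ for $k < m$, together with the check that $\theta_n$ carries $\overline{\mZ}_{0,n}$ into $\overline{\mZ}_{0,n-1}$; that check is already the $m=0$ case of Lemma~\ref{lem: hom.theta}, so you could cite it rather than rederive it, but making the extension step explicit is a genuine (if small) improvement in rigor.
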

\begin{proof}
Clearly $\overline{\mZ}_{0,n}$ commutes with
$\overline{\mZ}_{m,n}$ for any $n\geq m$ since
$\overline{\mZ}_{0,n}$ is the center of the algebra $\F\ovG_n$.
Hence $\overline{\mZ}_0$ is contained in the center of
$\overline{\mZ}_m$ as the sequences $(\alpha_n|n\geq
m)\in\overline{\mZ}_m$ are multiplied componentwise. Then it
follows that $\overline{\mZ}_0$ is contained in the center of
$\overline{\mZ}$.  Conversely, suppose $\alpha=(\alpha_n)$ lies in
the center of $\overline{\mZ}$. By Lemma~\ref{lem:ovmZm to FovGm},
there exists a natural embedding $\F\ovG_m\hookrightarrow
\overline{\mZ}$ for any nonnegative integer $m$, and hence
$\alpha$ commutes with any constant sequence coming from
$\F\ovG_m$. This means that each $\alpha_n$ commutes with
$\F\ovG_n$ and hence $\alpha_n\in\overline{\mZ}_{0,n}$ for all
$n$. This implies $\alpha\in\overline{\mZ}_0$.
\end{proof}

\section{The structure of the algebra $\overline{\mZ}_{0}$}\label{mZ0}
In this section we give a linear basis of the commutative algebra
$\overline{\mZ}_0$.

\subsection{Bases of the center $\mZ_{0,n}$ of $\mathbb{F}G_n$.}
For $0\leq k\leq n$, denote by $\mZ_{0,n}^k$ the $k$th term of the
filtration on $\mZ_{0,n}$ inherited from the algebra $\F\ovG_n$.
It can be easily checked that
$$
\mZ_{0,n}^0=\mathbb{F}\cdot1.
$$
 For
any partition-valued function
$\rho=(\rho(C_1),\ldots,\rho(C_r))\in {\mathcal P}( G_*)$ with
$$
\|\rho\|=\sum_{1\leq i\leq r}|\rho(C_i)|\leq n \text{ and }
\rho(C_i)=(\mu_1^i,\ldots,\mu_{b_i}^i)
$$
for $b_i\in\mathbb{Z}_+$ with $1\leq i\leq r$, we introduce the
element $C_n^{\rho}\in\mathbb{F}G_n$ as
$$
C_n^{\rho}=\sum_{T}C^{\rho}_{n,T},
$$
where the summation is over all subsets $T$ of $\mathbb{N}_n$ with
cardinality $\|\rho\|$ and $C^{\rho}_{n,T}$ is defined as follows:
\begin{equation}\label{eqn:CTrho}
C^{\rho}_{n,T}=\sum(g,(k^1_{1,1},\ldots,k^1_{1,\mu^1_1})\cdots
(k^1_{b_1,1},\ldots,k^1_{b_1,\mu^1_{b_1}})
\cdots(k^r_{1,1},\ldots,k^r_{1,\mu^r_1}) \cdots
(k^r_{b_r,1},\ldots,l^r_{b_r,\mu^r_{b_r}}))
\end{equation}
where the sum is over the sequences
$k^1_{1,1},\ldots,k^1_{1,\mu^1_1};\ldots
;k^1_{b_1,1},\ldots,k^1_{b_1,\mu^1_{b_1}}; \ldots; k^r_{1,1},
\ldots,k^r_{1,\mu^r_1};\quad $ $\ldots ;k^r_{b_r,1},\ldots$,
$k^r_{b_r,\mu^r_{b_r}}$ of pairwise distinct indices taken from
$T$, $g=(g_1,\ldots,g_n)\in G^n$ such that the cycle product
corresponding to each cycle $(k^t_{s,1},\ldots, k^t_{s,\mu^t_s})$
lies in $C_t$ for $1\leq t\leq r, 1\leq s\leq b_t$ and $g_{j}=1$
for all $j\notin T$. Here $(k^t_{s,1},\ldots, k^t_{s,\mu^t_s})$ is
understood as a cycle in the symmetric group $S_n$. For the empty
partition-valued function $\emptyset$, we set $C_n^{\emptyset}=1$.
For $\rho\in\mathcal{P}(G_*)$ with $\|\rho\|>n$, we set
$C_n^{\rho}=0$.
\begin{example}\label{eg1}
Suppose $G=\mathbb{Z}_2=\{1,-1\}$ and $\rho\in\mathcal{P}(G_*)$
satisfying $\rho(1)=(1)$ and $\rho(-1)=(2)$. Then it can be
checked that
\begin{align*} C_3^{\rho}=&~2((1,1,-1),(1)(23))+2((1,-1,1),(1)(23)) +
2((1,1,-1),(2)(13))\\
&+2((-1,1,1),(1)(13))+2((1,-1,1),(3)(12))+2((-1,1,1),(3)(12)).
\end{align*}
\end{example}
\begin{rem} In the case when $G=\{1\}$,
$\mathcal{P}(G_*)$ coincides with the set of all partitions and
the elements $C^{\rho}_n$ coincides with $C_n^{\rho(C_1)}$ defined
in ~\cite[Section 4, (4.3)]{MO}.
\end{rem}
Let $\rho_1^1=((1),\emptyset\ldots,\emptyset)\in\mathcal{P}(G_*)$
be the partition-valued function with $\rho^1_1(C_1)=(1)$ and $
\rho^1_1(C_k)=\emptyset$ for $2\leq k\leq r$. Note that
$C_n^{\rho^1_1}=n1$. Given two partition-valued functions
$\rho=(\rho(C_1),\ldots,\rho(C_r))$ and
$\pi=(\pi(C_1),\ldots,\pi(C_r))$, we set
$\rho\cup\pi=(\rho(C_1)\cup\pi(C_1),\ldots,\rho(C_r)\cup\pi(C_r))$,
where $\rho(C_k)\cup\pi(C_k)$ is the partition whose parts are
those of $\rho(C_k)$ and $\pi(C_k)$ in decreasing order for
$1\leq k\leq r$. Observe that for any $\rho\in\mathcal{P}(G_*)$
with $\|\rho\|\leq n$, we have
\begin{equation}\label{eqn:reln for ccs}
C_n^{\rho\cup\rho^1_1}=(n-\|\rho\|)C_n^{\rho}.
\end{equation}
By definition~(\ref{defn: degree}), each $\be\in G_n$ appearing on
the right hand side of the equation~(\ref{eqn:CTrho}) has degree
equal to $\sum_{1\leq j\leq b_1, \mu_j^1\geq 2}\mu_j^1+\sum_{k\geq
2}|\rho(C_k)|$, and hence
$$
\text{deg}(C_n^{\rho})=\sum_{1\leq j\leq b_1, \mu_j^1\geq
2}\mu_j^1+\sum_{k\geq 2}|\rho(C_k)|.
$$
\begin{prop}\label{prop:basis for mZ(Gn)}
The center $\mZ_{0,n}$ of the algebra $\mathbb{F}G_n$ has two
bases
\begin{enumerate}
\item $C_n^{\rho}, \quad \|\rho\|=n,$ \label{eln:conj.class.sum0}

\item $C_n^{\rho}, \quad \|\rho\|\leq n, \text{ and }\rho(C_1)
\text{ has no part equal to 1 }.$ \label{eln:conj.class.sum1}
\end{enumerate}
Moreover, the elements $C_n^{\rho}$ of degree less than or equal
to $k$ of each basis
 form a basis of $\mZ_{0,n}^k$ for $0\leq
k\leq n$.
\end{prop}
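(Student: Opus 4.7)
The plan is to reduce both claims to the standard fact that the conjugacy class sums $K_\rho$ indexed by $\rho\in\mathcal{P}_n(G_*)$ form a basis of the center $\mZ_{0,n}$.

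To prove claim~(\ref{eln:conj.class.sum0}), I would show that $C_n^{\rho}=z_\rho K_\rho$ whenever $\|\rho\|=n$, where $z_\rho:=\prod_{t}z_{\rho(C_t)}$ and $z_\lambda=\prod_k k^{m_k(\lambda)}m_k(\lambda)!$. Given any $x=(g,\sigma)$ of type $\rho$, one counts the sequences in~(\ref{eqn:CTrho}) producing $x$: each cycle of length $\mu^t_j$ admits $\mu^t_j$ cyclic rewritings (all yielding cycle-products in the same class $C_t$), and for each length $k$ the $m_k(\rho(C_t))$ cycles of that length can be assigned to their allotted positions in $m_k(\rho(C_t))!$ ways. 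Taking the product yields the multiplicity $z_\rho$, so $C_n^{\rho}=z_\rho K_\rho$ and~(\ref{eln:conj.class.sum0}) follows.

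For claim~(\ref{eln:conj.class.sum1}), I would iterate the identity~(\ref{eqn:reln for ccs}). Substituting $\rho\mapsto\rho\cup(\rho^1_1)^j$ successively, one obtains for any $\rho$ with $\|\rho\|\leq n$ and any $0\leq k\leq n-\|\rho\|$,
\begin{equation*}
C_n^{\rho\cup(\rho^1_1)^k}=(n-\|\rho\|)(n-\|\rho\|-1)\cdots(n-\|\rho\|-k+1)\,C_n^{\rho},
\end{equation*}
where $(\rho^1_1)^k$ denotes the $k$-fold $\cup$-power of $\rho^1_1$. Taking $k=n-\|\rho\|$ for any $\rho$ satisfying the constraint of~(\ref{eln:conj.class.sum1}) gives $C_n^{\rho'}=(n-\|\rho\|)!\,C_n^{\rho}$, where $\rho':=\rho\cup(\rho^1_1)^{n-\|\rho\|}$ satisfies $\|\rho'\|=n$. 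The assignment $\rho\mapsto\rho'$ is evidently a bijection between the index set of~(\ref{eln:conj.class.sum1}) and that of~(\ref{eln:conj.class.sum0}), so the transition matrix between the two families $\{C_n^\rho\}$ is diagonal with nonzero diagonal entries, and~(\ref{eln:conj.class.sum1}) is also a basis.

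For the filtration claim, I would invoke the formula $\deg(C_n^\rho)=\sum_{\mu^1_j\geq 2}\mu^1_j+\sum_{t\geq 2}|\rho(C_t)|$ that was already derived from the definition. For basis~(\ref{eln:conj.class.sum1}) this reduces to $\deg(C_n^\rho)=\|\rho\|$; for basis~(\ref{eln:conj.class.sum0}) with $\|\rho\|=n$ it equals $n-m_1(\rho(C_1))$, matching $|J(x)|=\deg(x)$ for any $x\in K_\rho$. Since $\mZ_{0,n}^k$ is spanned by the $K_\rho$ with $|J|\leq k$, the basis elements of degree at most $k$ in either~(\ref{eln:conj.class.sum0}) or~(\ref{eln:conj.class.sum1}) are precisely those lying in $\mZ_{0,n}^k$, so the filtration claim follows. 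The only delicate point is the multiplicity count for Step~1; the rest is combinatorial bookkeeping via the recursion~(\ref{eqn:reln for ccs}).
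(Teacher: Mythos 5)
Your proof is correct and follows essentially the same route as the paper's: claim~(\ref{eln:conj.class.sum0}) is reduced to proportionality of $C_n^\rho$ with the conjugacy class sum, and claim~(\ref{eln:conj.class.sum1}) is reduced to~(\ref{eln:conj.class.sum0}) by iterating the relation~(\ref{eqn:reln for ccs}). You supply explicit details that the paper states only as brief observations — the multiplicity constant $z_\rho$, the iterated scalar $(n-\|\rho\|)!$ together with the bijection $\rho\mapsto\rho'$, and the degree computation underlying the filtration claim — and in doing so you make visible an assumption the paper's proof also makes implicitly, namely that the integer scalars $z_\rho$ and $(n-\|\rho\|)!$ do not vanish in $\F$.
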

\begin{proof}
Note that $C_n^{\rho}$ with $\|\rho\|=n$ is proportional to the
conjugacy class sum corresponding to the type $\rho$. Hence the
elements~(\ref{eln:conj.class.sum0}) form a basis of $\mZ_{0,n}$.
The second statement follows from the fact that  the elements of
type~(\ref{eln:conj.class.sum1}) are proportional to those of
type~(\ref{eln:conj.class.sum0}) by~(\ref{eqn:reln for ccs}).
\end{proof}
\begin{lem}\label{lem:mult.for C}
Let $\rho=(\rho(C_1),\ldots,\rho(C_r))$ and
$\pi=(\pi(C_1,\ldots,\pi(C_r)))$ be two partition-valued functions
on $G_*$ satisfying $\rho(C_1)$ and $\pi(C_1)$ have no part equal
to 1, and let $\|\rho\|+\|\pi\|\leq n$. Then
\begin{equation}\label{eqn:mult.formula}
C_n^{\rho}C_n^{\pi}=C_n^{\rho\cup\pi}+(\ldots),
\end{equation}
where $(\ldots)$ stands for a linear combination of the elements $C_n^{\varrho}$
with $\|\varrho\|<\|\rho\|+\|\pi\|$.
\end{lem}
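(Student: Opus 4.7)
The plan is to decompose $C_n^\rho C_n^\pi$ using $C_n^\rho=\sum_T C_{n,T}^\rho$ and split the resulting double sum over pairs of subsets $(T,S)$ of $\mathbb{N}_n$ according to whether $T\cap S$ is empty or nonempty. First I would write
\[
C_n^\rho C_n^\pi \;=\; \sum_{|T|=\|\rho\|,\,|S|=\|\pi\|} C_{n,T}^\rho C_{n,S}^\pi \;=\; \Sigma_{\text{disj}}+\Sigma_{\text{ovlp}},
\]
with $\Sigma_{\text{disj}}$ consisting of the pairs with $T\cap S=\emptyset$ and $\Sigma_{\text{ovlp}}$ the rest. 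The goal is to show $\Sigma_{\text{disj}}=C_n^{\rho\cup\pi}$ and that $\Sigma_{\text{ovlp}}$ lies in a strictly lower level of the filtration on $\mZ_{0,n}$, so that Proposition~\ref{prop:basis for mZ(Gn)} expresses it in terms of $C_n^\varrho$ with $\|\varrho\|<\|\rho\|+\|\pi\|$.

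For the disjoint sum, the hypothesis that $\rho(C_1)$ and $\pi(C_1)$ have no parts equal to $1$ guarantees that each element appearing in $C_{n,T}^\rho$ has support exactly $T$ and each element of $C_{n,S}^\pi$ has support exactly $S$; hence when $T\cap S=\emptyset$ their product is an element of type $\rho\cup\pi$ on $T\sqcup S$. The main step is to establish
\[
\sum_{T\sqcup S=U} C_{n,T}^\rho C_{n,S}^\pi \;=\; C_{n,U}^{\rho\cup\pi}
\]
for each $U\subseteq\mathbb{N}_n$ with $|U|=\|\rho\|+\|\pi\|$, by fixing $\gamma\in G_n$ of type $\rho\cup\pi$ on $U$ and comparing the multiplicity with which it occurs on both sides. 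On the right, $\gamma$ arises with the symmetry factor $\prod_{t,j} j^{m_j^t(\rho\cup\pi)}\,m_j^t(\rho\cup\pi)!$ from cyclic rotations and orderings of equal-length cycles within each conjugacy class; on the left, one additionally sums over the $\prod_{t,j}\binom{m_j^t(\rho\cup\pi)}{m_j^t(\rho)}$ ways to partition the cycles of $\gamma$ into a $\rho$-subset (whose support is $T$) and a $\pi$-subset (whose support is $S$). The identity then reduces to $\binom{a+b}{a}a!\,b!=(a+b)!$ applied at each pair $(t,j)$. Summing over $U$ yields $\Sigma_{\text{disj}}=C_n^{\rho\cup\pi}$.

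For the overlapping part, any element occurring in $C_{n,T}^\rho C_{n,S}^\pi$ with $T\cap S\neq\emptyset$ has support contained in $T\cup S$, whose cardinality is strictly less than $\|\rho\|+\|\pi\|$. Under the hypothesis that $\rho(C_1)$ and $\pi(C_1)$ have no parts equal to $1$, the degree of such an element in the sense of~(\ref{defn: degree in AnG}) is bounded by its support size, so every term in $\Sigma_{\text{ovlp}}$ has degree $<\|\rho\|+\|\pi\|$. Since $C_n^\rho C_n^\pi$ and $C_n^{\rho\cup\pi}$ are both central, so is $\Sigma_{\text{ovlp}}=C_n^\rho C_n^\pi-C_n^{\rho\cup\pi}$, and consequently $\Sigma_{\text{ovlp}}\in\mZ_{0,n}^{\|\rho\|+\|\pi\|-1}$; Proposition~\ref{prop:basis for mZ(Gn)} then provides the required expansion in the $C_n^\varrho$ basis. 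The main obstacle I anticipate is the multiplicity bookkeeping in the disjoint case: one must track simultaneously three distinct sources of overcounting, namely cyclic rotations within each cycle, permutations among equal-length cycles of the same class, and the choice of how to partition the cycles of $\gamma$ between the $\rho$- and $\pi$-parts, so that the three factors combine correctly into the symmetry factor of $\rho\cup\pi$.
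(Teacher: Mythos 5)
Your proposal follows the paper's proof exactly: decompose $C_n^{\rho}C_n^{\pi}$ over pairs $(T,S)$ into disjoint and overlapping parts, identify the disjoint part with $C_n^{\rho\cup\pi}$, and use the hypothesis on $\rho(C_1),\pi(C_1)$ to bound the degree of the overlapping part by $|T\cup S|<\|\rho\|+\|\pi\|$, then invoke centrality and Proposition~\ref{prop:basis for mZ(Gn)}. The only difference is that you spell out the multiplicity bookkeeping (symmetry factors $\prod j^{m_j^t}m_j^t!$ and the binomial splitting) justifying $\Sigma_{\text{disj}}=C_n^{\rho\cup\pi}$, which the paper simply asserts with ``Note that''.
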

\begin{proof}
By~(\ref{eqn:CTrho}), we have
\begin{equation}\label{eqn:mult. for Cnrho}
C_n^{\rho}C_n^{\pi}=\sum_{T\cap
S=\emptyset}C^{\rho}_{n,T}C^{\pi}_{n,S}+\sum_{T\cap
S\neq\emptyset}C^{\rho}_{n,T}C^{\pi}_{n,S},
\end{equation}
where the sum is over subsets $T$ and $S$ of $\mathbb{N}_n$ with
cardinality $\|\rho\|$ and $\|\pi\|$, respectively. Note that the
first sum on the right hand side of~(\ref{eqn:mult. for Cnrho})
equals to $C_n^{\rho\cup\pi}$. Let $\be=(g,\sigma),
\be^{\prime}=(g^{\prime},\sigma^{\prime})\in G_n$ be elements
occurring in the expansion of $C_{n,T}^{\rho}$ and
$C_{n,S}^{\pi}$, respectively. Since both $\rho(C_1)$ and
$\pi(C_1)$ have no parts equal to 1, then $J(\be)=T$ and
$J(\be^{\prime})=S$ and hence $J(\be\be^{\prime})\subseteq T\cup
S$. Therefore each element $\be^{''}\in G_n$ occurring in the
expansion of $\sum_{T\cap
S\neq\emptyset}C^{\rho}_{n,T}C^{\pi}_{n,S}$ is of degree less than
$\|\rho\|+\|\pi\|$ and hence $\sum_{T\cap
S\neq\emptyset}C^{\rho}_{n,T}C^{\pi}_{n,S}$ is a linear
combination of the elements $C_n^{\varrho}$ with
$\|\varrho\|<\|\rho\|+\|\pi\|$.
\end{proof}
For each $1\leq k\leq r$ and integer $i\in\mathbb{Z}_+$, set
$\rho^k_i\in\mathcal{P}(G_*)$ as follows:
$$
\rho^k_i=(\emptyset,\ldots,(i),\ldots,\emptyset),
$$
with $\rho^k_i(C_k)=(i)$ and $\rho^k_i(C_l)=\emptyset$ for $l\neq
k.$
Observe that each $\rho\in\mathcal{P}(G_*)$ can be written as a
union of these $\rho_i^k$. Then the following is obvious by
Lemma~\ref{lem:mult.for C}.
\begin{cor}
Let
$l=(l_2^1,\ldots,l_n^1,l^2_1,\ldots,l^2_n,\ldots,l^r_1,\ldots,l^r_n)$
run over the $(nr-1)$-tuples of nonnegative integers such that
$\sum_{k=2}^rl_1^k+\sum_{i=2}^n\sum_{k=1}^ril_i^k\leq n$. Then the
monomials
\begin{equation}\label{monomials1}
\left((C_n^{\rho^1_2})^{l^1_2}\cdots(C_n^{\rho^1_n})^{l^1_n}\right)\left(
(C_n^{\rho^2_1})^{l^2_1}\cdots(C_n^{\rho^2_n})^{l^2_n}\right)
\cdots
\left((C_n^{\rho^r_1})^{l^r_1}\cdots(C_n^{\rho^r_n})^{l^r_n}\right)
\end{equation}
form a basis of $\mZ_{0,n}$. Moreover, for any $0\leq t\leq n$,
the monomials~(\ref{monomials1}) with
$\sum_{k=2}^rl_1^k+\sum_{i=2}^n\sum_{k=1}^ril_i^k\leq t$ form a
basis of $\mZ_{0,n}^t$.
\end{cor}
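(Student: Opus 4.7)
The plan is to combine Lemma~\ref{lem:mult.for C} with the basis description from Proposition~\ref{prop:basis for mZ(Gn)}(\ref{eln:conj.class.sum1}) via an upper-triangularity argument. First I would observe a parametrization: each tuple $l=(l_i^k)$ in the indicated range corresponds bijectively to a partition-valued function $\rho(l)\in\mathcal{P}(G_*)$ with $\rho(l)(C_1)$ having no part equal to $1$ and $\|\rho(l)\|\leq n$, namely
$$\rho(l)(C_1)=\bigcup_{i\geq 2}(i^{l_i^1}),\qquad \rho(l)(C_k)=\bigcup_{i\geq 1}(i^{l_i^k})\ \text{for }k\geq 2.$$
With this identification, the constraint $\sum_{k=2}^r l_1^k+\sum_{i=2}^n\sum_{k=1}^r i\,l_i^k\leq n$ is exactly $\|\rho(l)\|\leq n$, and for such $\rho(l)$ the degree formula displayed before Proposition~\ref{prop:basis for mZ(Gn)} collapses to $\deg(C_n^{\rho(l)})=\|\rho(l)\|$ since no part of $\rho(l)(C_1)$ equals $1$.

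Next, I would apply Lemma~\ref{lem:mult.for C} iteratively to the product in~(\ref{monomials1}). At every step the two factors meet the lemma's hypothesis, because each $\rho_i^k$ that participates has $\rho_i^k(C_1)$ either empty ($k\geq 2$) or a single part of size $i\geq 2$ ($k=1$), and partial products accumulate the same property. Writing $\rho=\rho(l)$, this yields
$$\bigl((C_n^{\rho^1_2})^{l^1_2}\cdots(C_n^{\rho^r_n})^{l^r_n}\bigr)=C_n^{\rho}+\sum_{\|\varrho\|<\|\rho\|}b_\varrho\,C_n^{\varrho},$$
where the lower-order $C_n^{\varrho}$ may be rewritten (using~(\ref{eqn:reln for ccs}) to strip any $1$-parts from $\varrho(C_1)$, which neither increases $\|\varrho\|$ nor changes $\deg(C_n^{\varrho})$) so that every $\varrho$ appearing lies in the basis of Proposition~\ref{prop:basis for mZ(Gn)}(\ref{eln:conj.class.sum1}).

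Finally I would conclude by triangularity: ordering parameters $\rho(l)$ by the value of $\|\rho(l)\|$ and arbitrarily within each level, the transition matrix from the monomials~(\ref{monomials1}) to the basis $\{C_n^{\rho}\}$ of Proposition~\ref{prop:basis for mZ(Gn)}(\ref{eln:conj.class.sum1}) is unitriangular, hence invertible, so the monomials form a basis of $\mZ_{0,n}$. For the filtered statement, since $\deg(C_n^{\rho(l)})=\|\rho(l)\|$ and all lower-order terms lie in strictly lower filtration degree, the monomials with $\sum_{k=2}^r l_1^k+\sum_{i=2}^n\sum_{k=1}^r i\,l_i^k\leq t$ are triangularly related to $\{C_n^{\rho}:\|\rho\|\leq t,\ \rho(C_1)\text{ has no part }=1\}$, which spans $\mZ_{0,n}^t$ by Proposition~\ref{prop:basis for mZ(Gn)}. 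The main point to be careful about is that the lower-order terms produced by the lemma need not a priori be in normalized form; this is resolved by the normalization step using~(\ref{eqn:reln for ccs}), which is compatible with the degree filtration.
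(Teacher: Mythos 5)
Your proposal is correct and takes essentially the same approach as the paper: identify each admissible tuple $l$ with the partition-valued function $\rho(l)$ whose $C_1$-component has no part equal to $1$, then apply Lemma~\ref{lem:mult.for C} iteratively to get a unitriangular change of basis to the elements of Proposition~\ref{prop:basis for mZ(Gn)}(\ref{eln:conj.class.sum1}). You have in fact spelled out the normalization step via~(\ref{eqn:reln for ccs}) and the compatibility with the degree filtration more explicitly than the paper, which simply declares the corollary obvious from the lemma.
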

\subsection{ A basis of $\overline{\mZ}_0$.}
For any subset
$T\subseteq \mathbb{N}_n$, set
\begin{align}
\va_{T}&=\prod_{i\in T}\va_{i}\label{eqn:set-va1},\\
\overline{\va}_T&=\prod_{i\in T}(1-\va_{i})\label{eqn:set-va2}.
\end{align}
For any $\rho=(\rho(C_1),\ldots,\rho(C_r))\in\mathcal{P}(G_*)$ with $\|\rho\|\leq n$,
set
\begin{equation}\label{defn:Delta_nrho}
\Delta_n^{\rho}=\sum_{T}C^{\rho}_{n,T}\overline{\va}_{T},
\end{equation}
where the summation is over all subsets $T\subseteq\mathbb{N}_n$
of cardinality $\|\rho\|$. For $\rho\in\mathcal{P}(G_*)$ with
$\|\rho\|>n$, we set $\Delta_n^{\rho}=0$. In particular,
\begin{equation}
\Delta_n^{\rho^1_1}=\sum_{i=1}^n(1-\va_i).
\end{equation}

By~(\ref{eqn:semigprel}), we have
$C^{\rho}_{n,T}\overline{\va}_{T}=\overline{\va}_{T}C^{\rho}_{n,T}$, and hence
$$
\Delta_n^{\rho}=\sum_{T}\overline{\va}_{T}C^{\rho}_{n,T}=
\sum_{T}\overline{\va}_{T}C^{\rho}_{n,T}\overline{\va}_{T},
$$
where the summation is over all subsets $T\subseteq\mathbb{N}_n$
of cardinality $\|\rho\|$.
\begin{example} Retain the assumption in Example~\ref{eg1}, we
have
$$
\Delta_3^{\rho}=C_3^{\rho}(1-\va_1)(1-\va_2)(1-\va_3).
$$

\end{example}
\begin{lem}\label{lem:Del in ovmZ0n}
The element $\Delta_n^{\rho}$ lies in $\overline{\mZ}_{0,n}$ for
any $\rho\in\mathcal{P}(G_*)$.
\end{lem}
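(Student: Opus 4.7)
Since $\ovG_{n-0}^{\prime}=\ovG_n$ when $m=0$, the algebra $\overline{\mZ}_{0,n}$ coincides with the center of $\F\ovG_n$, so my plan is to verify that $\Delta_n^{\rho}$ commutes with each of the natural generators of $\ovG_n$, namely the group $G_n$ together with the idempotents $\va_1,\ldots,\va_n$. I will do the two classes of generators separately; in the first case the sum is invariant because conjugation permutes the index set, and in the second case each individual summand $C_{n,T}^{\rho}\overline{\va}_T$ is invariant.

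For $w=(g,\sigma)\in G_n$, I would first note that the relations $g'\va_i=\va_ig'$ and $\sigma\va_i=\va_{\sigma(i)}\sigma$ from Proposition~\ref{prop:pre. of ovGn} give $w\va_iw^{-1}=\va_{\sigma(i)}$ and hence $w\overline{\va}_Tw^{-1}=\overline{\va}_{\sigma(T)}$. On the other side, the conjugacy-class description in Section~2.1 shows that if $(g',\sigma')$ has cycle structure of type $\rho$ supported on $T$ with $g'_j=1$ for $j\notin T$, then $w(g',\sigma')w^{-1}$ has the same cycle-product type on the support $\sigma(T)$ and is trivial outside $\sigma(T)$; the map is a bijection between the terms of $C_{n,T}^{\rho}$ and those of $C_{n,\sigma(T)}^{\rho}$, so $wC_{n,T}^{\rho}w^{-1}=C_{n,\sigma(T)}^{\rho}$. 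Multiplying and summing over the $\|\rho\|$-subsets $T$, which are permuted bijectively by $T\mapsto\sigma(T)$, yields $w\Delta_n^{\rho}w^{-1}=\Delta_n^{\rho}$.

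For $\va_i$, I would show that $\va_i$ commutes with every summand $C_{n,T}^{\rho}\overline{\va}_T$ separately, by splitting on whether $i\in T$. If $i\notin T$, then each $\be=(g',\sigma')$ in $C_{n,T}^{\rho}$ satisfies $\sigma'(i)=i$ and $g'_i=1$, so the relations of Proposition~\ref{prop:pre. of ovGn} force $\va_i\be=\be\va_i$; combined with the trivial commutation $\va_i\overline{\va}_T=\overline{\va}_T\va_i$, this gives $\va_iC_{n,T}^{\rho}\overline{\va}_T=C_{n,T}^{\rho}\overline{\va}_T\va_i$. If $i\in T$, I would use the derived identity $\va_i\be=\be\va_{(\sigma')^{-1}(i)}$; since $\sigma'$ preserves $T$, one has $(\sigma')^{-1}(i)\in T$, and the idempotency $\va_j(1-\va_j)=0$ forces $\va_{(\sigma')^{-1}(i)}\overline{\va}_T=0$, so $\va_iC_{n,T}^{\rho}\overline{\va}_T=0$. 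Symmetrically, $\overline{\va}_T\va_i=0$ when $i\in T$, hence $C_{n,T}^{\rho}\overline{\va}_T\va_i=0$ as well. Summing over $T$ gives $\va_i\Delta_n^{\rho}=\Delta_n^{\rho}\va_i$.

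The main conceptual point, and the only step that is not routine bookkeeping with the presentation of $\F\ovG_n$ and the conjugacy classification of $G_n$, is that the factors $(1-\va_i)$ assembled into $\overline{\va}_T$ were put there precisely to annihilate the summands in which $\va_i$ would otherwise fail to pass through $C_{n,T}^{\rho}$; this is what makes the $i\in T$ case collapse to zero on both sides.
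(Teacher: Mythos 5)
Your proof is correct and follows essentially the same route as the paper's: check commutation with $G_n$ by the conjugacy-invariance of the summands, and check commutation with each $\va_i$ by splitting on whether $i\in T$, where the key observation is that $\va_i\overline{\va}_T=\overline{\va}_T\va_i=0$ when $i\in T$. The only cosmetic difference is that for $i\in T$ you push $\va_i$ through $\be$ using $\va_i\be=\be\va_{(\sigma')^{-1}(i)}$, whereas the paper uses the already-noted identity $C_{n,T}^{\rho}\overline{\va}_T=\overline{\va}_T C_{n,T}^{\rho}$ to make the same summand vanish; both are valid.
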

\begin{proof}
By~(\ref{eqn:semigprel}) and the fact that any two elements of
$G_n$ occurring in $C_n^{\rho}$ are conjugate to each other, it
can be easily checked that $\Delta_n^{\rho}$ commutes with $G_n$.
Since $\ovG_n$ is generated by the wreath product $G_n$ and the
pairwise commuting idempotents $\va_1,\ldots,\va_n$, it suffices
to show that $\Delta_n^{\rho}$ commutes with $\va_i$ for each $1\leq
i\leq n$. Note that for $\va_i$ and any subset
$T\subseteq\mathbb{N}_n$ with cardinality $\|\rho\|$, if $i\in T$,
then $\va_i\overline{\va}_{T}=0=\overline{\va}_{T}\va_i$ since
$\va_i(1-\va_i)=0$; otherwise,
$\va_iC^{\rho}_{n,T}=C^{\rho}_{n,T}\va_i$. Therefore
$\va_iC^{\rho}_{n,T}\overline{\va}_{T}=C^{\rho}_{n,T}\overline{\va}_{T}\va_i$.
This implies $\va_i\Delta_n^{\rho}=\Delta_n^{\rho}\va_i$ for each
$1\leq i\leq n$.
\end{proof}
\begin{prop}\label{prop:Im ofDel} For each $n$ and $\rho\in\mathcal{P}(G_*)$, we have
\begin{equation}
\theta_n(\Delta_n^{\rho})=\Delta_{n-1}^{\rho}.
\end{equation}
\end{prop}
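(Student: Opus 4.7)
The plan is to exploit the description of $\theta_n$ given in the proof of Lemma~\ref{lem: hom.theta}, namely $\theta_n(x)=\va_nx\va_n$ once we identify $\F\ovG_{n-1}$ with the subalgebra of $\F\ovG_n$ spanned by elements whose $n$th row and column vanish. With this identification in hand, I split the defining sum
$$
\Delta_n^{\rho}=\sum_{|T|=\|\rho\|}C^{\rho}_{n,T}\overline{\va}_{T}
$$
according to whether or not $n\in T$, and compute $\va_n(C^{\rho}_{n,T}\overline{\va}_T)\va_n$ in each case.

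First I would dispose of the case $n\in T$. Writing $\overline{\va}_T=(1-\va_n)\overline{\va}_{T\setminus\{n\}}$ and using that the $\va_i$'s are pairwise commuting idempotents, one can slide $\va_n$ past $\overline{\va}_{T\setminus\{n\}}$ to obtain
$$
\va_n C^{\rho}_{n,T}\overline{\va}_T\va_n
=\va_n C^{\rho}_{n,T}(1-\va_n)\va_n\,\overline{\va}_{T\setminus\{n\}}=0,
$$
since $(1-\va_n)\va_n=0$. So only the subsets $T\subseteq \mathbb{N}_{n-1}$ survive.

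Next, for $T\subseteq\mathbb{N}_{n-1}$ I would observe the following: every summand $(g,\sigma)$ of $C^{\rho}_{n,T}$ satisfies $g_n=1$ and $\sigma(n)=n$, and each $\va_i$ with $i\le n-1$ agrees with $\phi$ applied to its counterpart in $\ovG_{n-1}$; so both factors $C^{\rho}_{n,T}$ and $\overline{\va}_T$ lie in the image of the canonical embedding $\phi:\F\ovG_{n-1}\hookrightarrow \F\ovG_n$, and moreover
$$
C^{\rho}_{n,T}\overline{\va}_T=\phi\bigl(C^{\rho}_{n-1,T}\overline{\va}_T\bigr).
$$
Since any $\phi(\ga)$ commutes with $\va_n$ (as $\phi(\ga)$ has $(n,n)$-entry $1$ and zeros elsewhere in the $n$th row/column), one gets $\va_n\phi(\ga)\va_n=\phi(\ga)\va_n$, which is precisely the element identified with $\ga$ inside $\F\ovG_{n-1}$. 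Thus $\theta_n(C^{\rho}_{n,T}\overline{\va}_T)=C^{\rho}_{n-1,T}\overline{\va}_T$.

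Summing these contributions over all $T\subseteq\mathbb{N}_{n-1}$ of size $\|\rho\|$ yields $\Delta_{n-1}^{\rho}$, as desired. (The degenerate case $\|\rho\|=n$ is automatic: every $T$ contains $n$, so both sides are zero, matching the convention $\Delta_{n-1}^{\rho}=0$.) I do not anticipate any real obstacle here beyond keeping the identifications between $\F\ovG_{n-1}$, its image under $\phi$ in $\F\ovG_n$, and the subalgebra $\va_n\F\ovG_n\va_n$ straight; the two key algebraic ingredients are the idempotent identity $(1-\va_n)\va_n=0$ and the fact that elements of $\F\ovG_n$ fixing the $n$th coordinate lie in the image of $\phi$.
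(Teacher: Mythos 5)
Your proof is correct and follows the same approach as the paper's proof: identify $\theta_n$ with $x\mapsto\va_n x\va_n$, split the defining sum of $\Delta_n^{\rho}$ according to whether $n\in T$, kill the terms with $n\in T$ via $(1-\va_n)\va_n=0$, and identify the surviving terms with those defining $\Delta_{n-1}^{\rho}$. You simply spell out the intermediate claims (that $C^{\rho}_{n,T}\overline{\va}_T$ for $T\subseteq\mathbb{N}_{n-1}$ lies in the image of $\phi$ and is sent by $\theta_n$ to $C^{\rho}_{n-1,T}\overline{\va}_T$) that the paper leaves implicit.
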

\begin{proof}
Recall that $\theta_n(\Delta_n^{\rho})$ can be identified with
$\va_n\Delta_n^{\rho}\va_n$, which reduces to remove
from~(\ref{defn:Delta_nrho}) all summands which correspond to the
subsets $T$ containing $n$. If $\|\rho\|=n$, then all the summands
vanish. Hence we have
$$
\theta_n(\Delta_n^{\rho})=\sum_{T\subseteq\mathbb{N}_{n-1},|T|=\|\rho\|}
C^{\rho}_{n,T}\overline{\va}_{T}=\Delta_{n-1}^{\rho}.
$$
\end{proof}
For fixed $\rho\in\mathcal{P}(G_*)$, the degrees of  elements
$\Delta_n^{\rho}$ for all $n$ equal to $\|\rho\|$, hence we have
the following.
\begin{cor}\label{cor:Deltarho}
For any $\rho\in\mathcal{P}(G_*)$, there exists an element
$\Delta^{\rho}\in \overline{\mZ}_0$ such that
$$
\theta^n(\Delta^{\rho})=\Delta_n^{\rho}
$$
for any $n\geq 0$.
\end{cor}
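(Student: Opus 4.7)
The plan is to exhibit the element $\Delta^\rho$ directly as the sequence $(\Delta_n^\rho)_{n\geq 0}$ and verify the three conditions in Definition~\ref{defn:Am(G)} with $m=0$. Two of these conditions have already been supplied by the preceding results: membership in each centralizer, $\Delta_n^\rho\in\overline{\mZ}_{0,n}$, is exactly Lemma~\ref{lem:Del in ovmZ0n}, and compatibility with the projections, $\theta_n(\Delta_n^\rho)=\Delta_{n-1}^\rho$, is exactly Proposition~\ref{prop:Im ofDel}. Thus the only remaining task is to establish the uniform degree bound $\sup_{n\geq 0}\deg(\Delta_n^\rho)<\infty$.

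For the degree bound, I would expand $\overline{\va}_T=\sum_{S\subseteq T}(-1)^{|S|}\va_S$ and rewrite
$$
\Delta_n^\rho=\sum_{\substack{T\subseteq \mathbb{N}_n \\ |T|=\|\rho\|}}\sum_{S\subseteq T}(-1)^{|S|}C^\rho_{n,T}\va_S,
$$
so that every basis element appearing in $\Delta_n^\rho$ has the form $\be\va_S$ for some $\be\in G_n$ occurring in $C^\rho_{n,T}$ and some $S\subseteq T$. For such $\be$, the defining formula~(\ref{eqn:CTrho}) forces $\sigma$ to fix every index outside $T$ and $g_j=1$ for $j\notin T$, hence $J(\be)\subseteq T$. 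A short bookkeeping argument using the convention~(\ref{multcon}) shows that $J(\be\va_S)=S\cup J(\be)\subseteq T$, so $\deg(\be\va_S)\leq |T|=\|\rho\|$, uniformly in $n$. (Taking $S=T$ realises the bound, so in fact $\deg(\Delta_n^\rho)=\|\rho\|$ for all $n\geq\|\rho\|$, matching the assertion made just before the corollary.)

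The argument has no real obstacle: the only nontrivial point is the small matrix computation identifying $J(\be\va_S)$, which is a direct check from the formula $(\be\va_S)_{kk}=0$ if $k\in S$ and $(\be\va_S)_{kk}=\be_{kk}$ otherwise. Once the degree bound is in place, the three conditions of Definition~\ref{defn:Am(G)} are all satisfied, so $(\Delta_n^\rho)_{n\geq 0}$ defines an element $\Delta^\rho\in\overline{\mZ}_0$, and by construction $\theta^n(\Delta^\rho)=\Delta_n^\rho$ for every $n\geq 0$.
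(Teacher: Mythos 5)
Your proposal is correct and mirrors the paper's reasoning: the paper itself deduces the corollary directly from Lemma~\ref{lem:Del in ovmZ0n}, Proposition~\ref{prop:Im ofDel}, and the unproved remark immediately preceding it that $\deg(\Delta_n^\rho)=\|\rho\|$ for all $n$. Your only additional contribution is to justify the degree bound explicitly via the computation $J(\be\va_S)=S\cup J(\be)\subseteq T$, which is exactly the omitted bookkeeping; the parenthetical claim that $S=T$ realizes the bound is only needed as an upper bound here, so potential cancellations do not affect the argument.
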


Let $I(n,G)=\F\ovG_n(1-\va_n)$ be the left ideal of $\F\ovG_n$
generated by $1-\va_n$.
\begin{lem}\label{lem:In(G)and mZ0(Gn)}
For any positive integer $n$, we have
$$I(n,G)\cap \overline{\mZ}_{0,n}=\mZ_{0,n}(1-\va_1)\cdots(1-\va_n).$$
\end{lem}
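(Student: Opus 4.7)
The plan is to use the element $\eta := (1-\va_1)\cdots(1-\va_n)\in\F\ovG_n$ as a bridge between the centers $\mZ_{0,n}$ and $\overline{\mZ}_{0,n}$. My first task is to establish three preliminary facts about $\eta$. (a) $\eta$ itself lies in $\overline{\mZ}_{0,n}$: the identities $(1-\va_j)\va_j=0$ force $\eta\va_j=\va_j\eta=0$, and the computation $(g,\sigma)(1-\va_i)=(1-\va_{\sigma(i)})(g,\sigma)$ (using $\sigma\va_i=\va_{\sigma(i)}\sigma$ and $g\va_i=\va_i g$), followed by reindexing the product over $i$, shows $\eta$ commutes with every $(g,\sigma)\in G_n$; since $G_n$ and the $\va_j$ generate $\F\ovG_n$ by Proposition~\ref{prop:pre. of ovGn}, $\eta$ is central. (b) Any $\ga\in\ovG_n\setminus G_n$ has some column index $j$ with $\ga\va_j=\ga$, so $\ga(1-\va_j)=0$ and hence $\ga\eta=0$; this collapses $\F\ovG_n\eta$ onto $\F G_n\eta$. (c) The map $y\mapsto y\eta$ on $\F G_n$ is injective: expanding $y\eta=\sum_{T\subseteq\mathbb{N}_n}(-1)^{|T|}y\va_T$ in the basis $\ovG_n$, the $T=\emptyset$ summand contributes exactly $y$ to the $G_n$-component while every $T\neq\emptyset$ summand produces elements of rank $n-|T|<n$.

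For the inclusion $\supseteq$, the product $z\eta$ with $z\in\mZ_{0,n}$ lies in $I(n,G)$ via the rightmost factor $(1-\va_n)$. To see $z\eta\in\overline{\mZ}_{0,n}$, I would check commutation with $G_n$ (immediate from centrality of $z$ in $\F G_n$ and centrality of $\eta$) and with each $\va_j$: here $z\eta\va_j=0$ is clear, while $\va_j z\eta=0$ follows by writing $z=\sum c_{(g,\sigma)}(g,\sigma)$ and pushing $\va_j$ through each $(g,\sigma)$ via $\va_j(g,\sigma)=(g,\sigma)\va_{\sigma^{-1}(j)}$ so that the $\va$-factor is absorbed by $\eta$.

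For the inclusion $\subseteq$, let $x\in I(n,G)\cap\overline{\mZ}_{0,n}$. Writing $x=u(1-\va_n)$ yields $x\va_n=0$, and since $x$ is central, conjugation by the transposition $(i,n)\in G_n$ gives $x\va_i=0$ for every $i$, so $x(1-\va_i)=x$ for all $i$ and therefore $x\eta=x$. Using (b) I may write $x=y\eta$ for some $y\in\F G_n$. For any $z\in\F G_n$, centrality of both $x$ and $\eta$ yields $(yz-zy)\eta=0$, and the injectivity in (c) forces $yz=zy$. Hence $y\in\mZ_{0,n}$ and $x=y(1-\va_1)\cdots(1-\va_n)\in\mZ_{0,n}(1-\va_1)\cdots(1-\va_n)$, completing the inclusion.

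The main technical obstacle is the centrality of $\eta$ in $\F\ovG_n$ (despite its asymmetric appearance) together with the matching injectivity of $y\mapsto y\eta$ on $\F G_n$; both rely on careful bookkeeping with relations~(\ref{reln:symm.va.}) and~(\ref{reln:gp.va}) and on isolating the $G_n$-``leading component'' of an element of $\F\ovG_n$. Once these preliminaries are in place the rest of the argument reduces to short direct manipulations.
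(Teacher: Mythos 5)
Your proposal is correct and follows essentially the same strategy as the paper: show $\eta := (1-\va_1)\cdots(1-\va_n)$ is central, deduce $x\va_i = 0$ for all $i$ and hence $x = x\eta$, collapse $x$ onto its $\F G_n$-component $y$ with $x = y\eta$, and conclude $y\in\mZ_{0,n}$. The one refinement is that you package the final step as injectivity of $y\mapsto y\eta$ on $\F G_n$, giving $(yz-zy)\eta = xz-zx = 0 \Rightarrow yz = zy$ for all $z$; the paper instead conjugates directly by $\be\in G_n$ and compares $G_n$-components of $x = \be y\be^{-1}\eta$, which is the same mechanism presented less abstractly.
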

\begin{proof}
Suppose $x\in I(n,G)\cap \overline{\mZ}_{0,n}$, then $x\va_n=0$ as
$(1-\va_n)\va_n=0$. Since $x\in\overline{\mZ}_{0,n}$, $x$ is
invariant under the conjugation by the elements of $G_n$ and hence
$x\va_k=0$ for $1\leq k\leq n$ by (\ref{eqn:semigprel}). Thus
$x=x(1-\va_{1})\cdots(1-\va_n)$. Note that we can write $x$ as
\begin{equation}\label{eqn:exp.of x }
 x=y+y^{\prime},
\end{equation} where $y$ and $y^{\prime}$ are spanned by elements of
$G_n$ and $\ovG_n\setminus G_n$, respectively. Moreover $y$ and
$y^{\prime}$ are uniquely determined by $x$. Since for each
$\ga\in\ovG_n\setminus G_n$ there exists $1\leq i\leq n$ such that
$\ga\va_i=\ga$, each element in $\ovG_n\setminus G_n$ is
annihilated by $(1-\va_{1})\cdots(1-\va_n)$ and hence
$y^{\prime}(1-\va_{1})\cdots(1-\va_n)=0$. This implies
\begin{equation}\label{eqn:exp.2of x}
x=y(1-\va_{1})\cdots(1-\va_n).
 \end{equation} Now for any $\be\in
G_n$, $x=\be x\be^{-1}=\be y\be^{-1}(1-\va_{1})\cdots(1-\va_n)$
since
$\be(1-\va_{1})\cdots(1-\va_n)=(1-\va_{1})\cdots(1-\va_n)\be$
by~(\ref{reln:symm.va.}) and~(\ref{reln:gp.va}). Hence we have
$$x=\be y\be^{-1}+\sum_{\emptyset\neq
S\subseteq\mathbb{N}_n}(-1)^{|S|}\be y\be^{-1}\va_S.
$$
Clearly $\sum_{\emptyset\neq S\subseteq\mathbb{N}_n}(-1)^{|S|}\be
y\be^{-1}\va_S$ is a linear combination of elements of
$\ovG_n\setminus G_n$ while $\be y\be^{-1}\in\mathbb{F}G_n$.
Comparing the two expansions (\ref{eqn:exp.of x }) and
(\ref{eqn:exp.2of x}) of $x$ we obtain $\be y\be^{-1}=y$ for each
$\be\in G_n$. This means $y\in \mZ_{0,n}$, and hence $x\in
\mZ_{0,n}(1-\va_1)\cdots(1-\va_n)$.

Conversely, suppose $x=y(1-\va_{1})\cdots(1-\va_n)$ for some $y\in
\overline{\mZ}_{0,n}$. It is clear that $x\in I(n,G)$. On the
other hand, since $y\in \overline{\mZ}_{0,n}$, $x$ is invariant
under the conjugation by the elements of $G_n$. Observe that $x$
is both left and right annihilated by $\va_{1},\ldots,\va_n$. This
implies $x$ commutes with $\ovG_n$, and hence $x\in
\overline{\mZ}_{0,n}$.
\end{proof}
Recall that for any $\be\in G_n$, $\bar{J}(\be)=\{i|1\leq i\leq n,
\be_{ii}=1\}$ and for any subset $T\subseteq\mathbb{N}_n$,
$\va_{T}=\prod_{i\in T}\va_i$.
\begin{lem}\label{lem:bijection1}
The mapping
\begin{equation}\label{map:StoT}
\be\mapsto \be\va_{\bar{J}(\be)}
\end{equation}
defines a bijection of $G_n$ onto the set of all elements of
degree $n$ in $\ovG_n$ satisfying that the $i$th row and $i$th
column are zero or nonzero at the same time for any $1\leq i\leq
n$.
\end{lem}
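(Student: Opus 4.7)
The plan is to establish the bijection in three steps: well-definedness of the map, injectivity, and surjectivity, all by analyzing how right-multiplication by $\va_i$ interacts with the matrix structure of elements in $G_n$.

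First, I would check that $\be\va_{\bar{J}(\be)}$ really lies in the target set. The key point is that $\va_i$ acts on the right by zeroing out column $i$. Writing $\be=\Delta(g)M(\sigma)\in G_n$, the condition $i\in\bar{J}(\be)$ means $\be_{ii}=1$, which forces $\sigma(i)=i$ and $g_i=1$. In particular, the unique nonzero entries in both row $i$ and column $i$ of $\be$ lie at the diagonal position $(i,i)$, so killing column $i$ simultaneously kills row $i$. Conversely, for $i\in J(\be)$ the unique nonzero entry in row $i$ lies in column $\sigma^{-1}(i)$, and I would check that $\sigma^{-1}(i)\in J(\be)$ as well (otherwise $\sigma$ would fix $i$ with $g_i=1$, contradicting $i\in J(\be)$), so this entry survives the multiplication. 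This shows $\be\va_{\bar{J}(\be)}$ has zero rows and columns exactly at $\bar{J}(\be)$, and all surviving diagonal entries $(i,i)$ with $i\in J(\be)$ are either $0$ or $g_i\neq 1$, giving $\deg(\be\va_{\bar{J}(\be)})=n$.

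For injectivity, suppose $\be\va_{\bar{J}(\be)}=\be'\va_{\bar{J}(\be')}$. The previous analysis shows that the set $\bar{J}(\be)$ can be read off from the image as the indices of zero rows (equivalently, zero columns). Hence $\bar{J}(\be)=\bar{J}(\be')$, and on the complementary indices $J(\be)=J(\be')$ the matrices $\be$ and $\be'$ literally agree with the image. Since $\be_{ii}=1=\be'_{ii}$ for $i\in\bar{J}(\be)=\bar{J}(\be')$ with zero off-diagonal entries in such rows, we conclude $\be=\be'$.

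For surjectivity, given $\gamma\in\ovG_n$ with $\deg(\gamma)=n$ and the row/column simultaneity property, let $S$ be the set of indices $i$ for which row $i$ (equivalently column $i$) of $\gamma$ is zero. I would define $\be\in G_n$ by $\be_{ij}=\gamma_{ij}$ for $i\notin S$, together with $\be_{ii}=1$ and $\be_{ij}=0$ for $i\in S$, $j\neq i$. The simultaneity condition guarantees that rows and columns of $\gamma$ indexed by $\mathbb{N}_n\setminus S$ already have exactly one nonzero entry lying in $\mathbb{N}_n\setminus S$, so $\be$ has exactly one nonzero entry in each row and column and indeed belongs to $G_n$. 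The hypothesis $\deg(\gamma)=n$ ensures $\gamma_{ii}\neq 1$ for $i\notin S$, so $\bar{J}(\be)=S$. A final case check gives $\be\va_S=\gamma$, completing surjectivity.

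The main obstacle is the bookkeeping in the first step: one must carefully verify that for $i\in\bar{J}(\be)$, the unique nonzero entries in row and column $i$ of $\be$ coincide at $(i,i)$, and that for $i\in J(\be)$ the supporting column index $\sigma^{-1}(i)$ also lies outside $\bar{J}(\be)$, so that no inadvertent cancellation occurs. Once this matrix-level bookkeeping is settled, injectivity and surjectivity are essentially forced.
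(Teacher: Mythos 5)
Your proof is correct and follows essentially the same route as the paper's: observe that $\be\va_{\bar{J}(\be)}$ is just $\be$ with the diagonal $1$'s replaced by $0$'s (which is what your case analysis of rows and columns amounts to), and for surjectivity reconstruct $\be$ from $\gamma$ by putting $1$'s back on the diagonal of the zero rows. The only difference is expository: you spell out the well-definedness check entry by entry and treat injectivity separately, whereas the paper compresses well-definedness into one observation and gets injectivity for free from the explicit inverse.
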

\begin{proof}
Note that $\be\va_{\bar{J}(\be)}$ is the matrix obtained from
$\be$ by replacing all the $1$'s on the diagonal by zeros. This
implies that for any $1\leq i\leq n$, the $i$th row and the $i$th
column of $\be\va_{\bar{J}(\be)}$ are zero or nonzero at the same
time and moreover $\text{deg}(\be\va_{\bar{J}(\be)})=n.$

Conversely, suppose $\ga\in\ovG_n$ is of degree $n$ satisfying
that the $i$th row and $i$th column are zero or nonzero at the
same time for each $1\leq i\leq n$. Let $\be\in G_n$ be defined as
follows:
\begin{eqnarray}
\be_{ij}=\left\{
\begin{array}{ll}
1,\quad \text{if } i=j \text{ and the }i\text{-th row is zero}\\
\gamma_{ij},\quad \text{otherwise}
\end{array}\right.
\end{eqnarray}
It is easy to see that $\ga$ is the image of $\be$ under the
mapping~(\ref{map:StoT}).
\end{proof}
\begin{lem}\label{lem:injective theta}
The restriction of the projection $\theta_n:
\overline{\mZ}_{0,n}\rightarrow \overline{\mZ}_{0,n-1}$ to the
subspace $\overline{\mZ}_{0,n}^{n-1}$ is injective.
\end{lem}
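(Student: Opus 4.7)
My plan is to first pin $x$ down to a canonical form using Lemma~\ref{lem:In(G)and mZ0(Gn)}, and then extract top-degree coefficients to show that the canonical form must vanish.

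\emph{Step 1 (reduction).} Let $x\in\overline{\mZ}_{0,n}^{n-1}$ with $\theta_n(x)=0$. Under the identification of $\F\ovG_{n-1}$ as a subalgebra of $\F\ovG_n$, the hypothesis reads $\va_nx\va_n=0$. Centrality of $x$ gives $\va_nx=x\va_n$, so combined with $\va_n^2=\va_n$ one obtains $\va_nx=0$. Conjugating by the transposition $(i,n)\in S_n\subseteq G_n$ (which sends $\va_n$ to $\va_i$) and using $G_n$-invariance of $x$ yields $\va_ix=0=x\va_i$ for every $i$. In particular $x=x(1-\va_n)\in I(n,G)$, so Lemma~\ref{lem:In(G)and mZ0(Gn)} produces an element $y\in\mZ_{0,n}$ with $x=y\prod_{i=1}^n(1-\va_i)$. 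It remains to prove $y=0$.

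\emph{Step 2 (extracting a top-degree coefficient).} Expand $y=\sum_{\be\in G_n}c_\be\,\be$, so
$$x=\sum_{\be\in G_n}\sum_{S\subseteq\mathbb{N}_n}(-1)^{|S|}c_\be\,\be\va_S,$$
and note $\deg(\be\va_S)=|J(\be)\cup S|$. Fix $\be_0\in G_n$ and set $\widetilde{\be}_0:=\be_0\va_{\bar J(\be_0)}$; Lemma~\ref{lem:bijection1} shows that $\widetilde{\be}_0$ has degree $n$ and that $\be\mapsto\widetilde{\be}$ is injective on $G_n$. A direct column-by-column analysis of the identity $\be\va_S=\widetilde{\be}_0$ shows that the zero columns of both sides must coincide (forcing $S=\bar J(\be_0)$) and the remaining columns must match (forcing $\be$ to agree with $\be_0$ on the $J(\be_0)$-columns), which leaves $\be|_{\bar J(\be_0)}$ free to range over $G_{\bar J(\be_0)}\cong G_{|\bar J(\be_0)|}$. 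Denoting such $\be$ by $\be_0\ast\ga$, the coefficient of $\widetilde{\be}_0$ in $x$ equals
$$(-1)^{|\bar J(\be_0)|}\sum_{\ga\in G_{\bar J(\be_0)}}c_{\be_0\ast\ga},$$
and this must vanish for every $\be_0\in G_n$, since $\deg(x)\le n-1$.

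\emph{Step 3 (triangular induction).} Write $y=\sum_\pi c_\pi C_n^\pi$ in the basis of Proposition~\ref{prop:basis for mZ(Gn)}(2). Then $c_\be$ depends only on the reduced type of $\be$ (its cycle type with all length-$1$ cycles carrying the identity removed). Setting $\tau_0:=$ reduced type of $\be_0|_{J(\be_0)}$ and $k:=|\bar J(\be_0)|$, and grouping the sum over $\ga$ by its reduced type, the vanishing condition becomes
$$\sum_\tau N(\tau,k)\,M(\tau_0\cup\tau)\,c_{\tau_0\cup\tau}=0,$$
where $N(\tau,k)$ is the size of the conjugacy class in $G_k$ of reduced type $\tau$ and $M(\cdot)$ is the strictly positive multiplicity with which any element of the corresponding class appears in $C_n^{\tau_0\cup\tau}$. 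The term $\tau=\emptyset$ contributes $M(\tau_0)\,c_{\tau_0}$, while every other term carries $c_{\tau_0\cup\tau}$ with $\|\tau_0\cup\tau\|>\|\tau_0\|$. Downward induction on $\|\tau_0\|$—the base case $\|\tau_0\|=n$ giving $c_{\tau_0}=0$ immediately—forces $c_\pi=0$ for every admissible $\pi$, so $y=0$ and therefore $x=0$.

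\emph{Main obstacle.} The technical heart of the argument is the column-bookkeeping in Step~2: accurately enumerating the preimages of $\widetilde{\be}_0$ under $(\be,S)\mapsto\be\va_S$ and confirming that each such preimage indeed produces a degree-$n$ summand. Once this matching via Lemma~\ref{lem:bijection1} is secured, the linear system in Step~3 is triangular with nonzero diagonal coefficients $M(\tau_0)>0$, and the induction is mechanical.
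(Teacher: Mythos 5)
Your proof is correct, and Step~1 coincides with the paper's reduction via Lemma~\ref{lem:In(G)and mZ0(Gn)}, but after that the arguments genuinely diverge. The paper expands $x=\sum_\be d_\be\,\be(1-\va_1)\cdots(1-\va_n)$ and proves outright that the top-degree parts $\Phi(\be)=\sum_{Q\supseteq\bar J(\be)}(-1)^{|Q|}\be\va_Q$ are linearly independent over all $\be\in G_n$; the mechanism is a filtration by $\text{rank}$, with Lemma~\ref{lem:bijection1} guaranteeing the rank-$k$ ``leading terms'' $\be\va_{\bar J(\be)}$ are pairwise distinct, and a descending sweep through $\Psi_n,\Psi_{n-1},\ldots$. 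Crucially, the paper never uses that $y\in\mZ_{0,n}$ in this step. You instead read off only the coefficient of the single element $\widetilde\be_0=\be_0\va_{\bar J(\be_0)}$ — a much weaker piece of the degree-$n$ information, yielding $\sum_{\ga\in G_{\bar J(\be_0)}}c_{\be_0\ast\ga}=0$ rather than $c_\be=0$ directly — and then you close the gap using the centrality of $y$ to convert these constraints into a triangular system indexed by reduced types (with the diagonal coefficients $M(\tau_0)>0$), running a descending induction on $\|\tau_0\|$. What the paper's rank argument buys is a stronger statement (unconditional linear independence of the $\Phi(\be)$) proved without invoking centrality; what your argument buys is a shorter bookkeeping of the top-degree part (one column-matching analysis at the distinguished elements $\widetilde\be_0$ rather than a rank filtration) by putting the conjugacy-invariance of $y$ to explicit use. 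Both are sound; your Step~2 column analysis, including the implicit fact that $\sigma_0$ permutes $J(\be_0)$ so that the $\bar J(\be_0)\times\bar J(\be_0)$ block is free, checks out.
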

\begin{proof}
Let $x\in \overline{\mZ}_{0,n}$ and $\theta_n(x)=0$. It suffices
to show that $\text{deg}(x)=n$ unless $x=0$. Note that
$\theta_n(x)=0$ implies that $x\va_n=0$, and hence
$x=x(1-\va_n)\in I(n,G)$, which means $x\in I(n,G)\cap
\overline{\mZ}_{0,n}$. By Lemma~\ref{lem:In(G)and mZ0(Gn)}, $x$
can be written as a linear combination of elements of the form
$\be(1-\va_1)\cdots(1-\va_n)$ for some $\be\in G_n$. Let us
rewrite as
\begin{align}
\be(1-\va_1)\cdots(1-\va_n)&=\sum_{Q\subseteq\mathbb{N}_n}(-1)^{|Q|}\be\va_{Q}\notag\\
&=\sum_{Q\supseteq
\bar{J}(\be)}(-1)^{|Q|}\be\va_{Q}+\sum_{Q\nsupseteq
\bar{J}(\be)}(-1)^{|Q|}\be\va_{Q}\label{decomp. for sum}.
\end{align}
Note that all elements appearing in the first sum in~(\ref{decomp.
for sum}) are of degree $n$ while those appearing in the second
sum are of degree strictly less than $n$. Now it reduces to show
that the elements $\Phi(\be):=\sum_{Q\supseteq
\bar{J}(\be)}(-1)^{|Q|}\be\va_{Q}$ are linearly independent.
Suppose $\sum_{\be\in G_n}d_{\be}\Phi(\be)=0$ for some
$d_{\be}\in\mathbb{F}$. We shall show that $d_{\be}=0$ for all
$\be\in G_n.$ Recall from~(\ref{defn:rank}) that for each
$\ga\in\ovG_n$, $\text{rank}(\ga)=\{l|~1\leq l\leq n, \ga_{kl}\neq
0 \text{ for some }1\leq k\leq n\}$, that is, the number of
nonzero columns in $\ga$. Then we have
$\text{rank}(\be\va_Q)=n-|Q|$, and hence
\begin{align}
\text{rank}(\be\va_{Q})&=n-|\bar{J}(\be)|, \quad \text{if }Q=\bar{J}(\be)\label{eqn:rank1}\\
\text{rank}(\be\va_{Q})&<n-|\bar{J}(\be)|, \quad \text{if
}Q\supset \bar{J}(\be)\label{eqn:rank2}.
\end{align}
This implies that the rank of the elements appearing in
$\Phi(\be)$ is no bigger than $n-|\bar{J}(\be)|$ For each $0\leq
k\leq n$, set
$$
\Psi_k=\{\be\in G_n|~ n-|\bar{J}(\be)|=k\}.
$$
Then $G_n$ is the disjoint union as:
$$
G_n=\cup_{k=0}^n\Psi_k.
$$
Hence we can rewrite $\sum_{\be\in G_n}d_{\be}\Phi(\be)=0$ as
\begin{equation}\label{eqn:n+1 summands}
\sum_{\be\in\Psi_0}d_{\be}\Phi(\be)+\cdots+
\sum_{\be\in\Psi_{n-1}}d_{\be}\Phi(\be)+
\sum_{\be\in\Psi_n}d_{\be}\Phi(\be)=0.
\end{equation}
Note that the elements of rank $n$ in~(\ref{eqn:n+1 summands}) are
of the form $\be\va_{\bar{J}(\be)}$ with $\be\in\Psi_n$
by~(\ref{eqn:rank1}) and~(\ref{eqn:rank2}) and hence
$$
\sum_{\be\in\Psi_n}d_{\be}\be\va_{\bar{J}(\be)}=0.
$$ But Lemma~\ref{lem:bijection1} says that $\be\va_{\bar{J}(\be)}$ with
$\be\in G_n$ are pairwise distinct elements in $\ovG_n$, and hence
$$d_{\be}=0 \text{ for all }\be\in\Psi_{n}.
$$  Then~(\ref{eqn:n+1
summands}) becomes to
$\sum_{\be\in\Psi_0}d_{\be}\Phi(\be)+\cdots+\sum_{\be\in\Psi_{n-1}}d_{\be}\Phi(\be)=0$.
Apply the same argument to the elements of rank $n-1$ and we get
$d_{\be}=0$ for all $\be\in\Psi_{n-1}$. Continue this way and
finally we get  $d_{\be}=0$ for any $\be\in G_n$.
\end{proof}

\begin{prop}\label{prop:basis for ovmZ0}
For any positive integer $n$, the elements $\Delta_n^{\rho}$,
where $\rho\in\mathcal{P}(G_*)$ with $\|\rho\|\leq n$, form a
basis of $\overline{\mZ}_{0,n}$. Furthermore, for $0\leq k\leq n$,
the elements $\Delta_n^{\rho}$ with $\|\rho\|\leq k$ form a basis
of $\overline{\mZ}^k_{0,n}$.
\end{prop}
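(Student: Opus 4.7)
The plan is to induct on $n$, proving the filtered statement (the unfiltered one is the special case $k=n$); the base $n=0$ is immediate since $\overline{\mZ}_{0,0}=\F$ and $\Delta_0^{\emptyset}=1$. Two preliminary observations drive the argument: first, each $\Delta_n^\rho$ has degree at most $\|\rho\|$, so $\Delta_n^\rho \in \overline{\mZ}_{0,n}^{\|\rho\|}$ (immediate from $J(\be\va_S)=J(\be)\cup S\subseteq T$ for any $\be$ in $C_{n,T}^\rho$ and $S\subseteq T$); second, when $\|\rho\|=n$ the only index set in the definition is $T=\mathbb{N}_n$, giving the identity $\Delta_n^\rho = C_n^\rho\,\overline{\va}_{\mathbb{N}_n}$.

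For the spanning step, given $x\in\overline{\mZ}_{0,n}^k$, I would use the induction hypothesis and the compatibility of $\theta_n$ with the filtration to expand $\theta_n(x)=\sum_{\|\rho\|\leq k} c_\rho\Delta_{n-1}^\rho$. Setting $y=x-\sum_{\|\rho\|\leq k}c_\rho\Delta_n^\rho$, Proposition \ref{prop:Im ofDel} yields $\theta_n(y)=0$ while $\text{deg}(y)\leq k$. Split on $k$: if $k\leq n-1$, Lemma \ref{lem:injective theta} immediately forces $y=0$; if $k=n$, use that $y$ commutes with $\va_n$ to rewrite $\theta_n(y)=\va_n y=0$, so $y=y(1-\va_n)\in I(n,G)$. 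Lemma \ref{lem:In(G)and mZ0(Gn)} then writes $y=z\,\overline{\va}_{\mathbb{N}_n}$ for some $z\in\mZ_{0,n}$, and expanding $z$ in basis (1) of Proposition \ref{prop:basis for mZ(Gn)} together with the second preliminary observation gives $y=\sum_{\|\rho\|=n}d_\rho\Delta_n^\rho$.

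For linear independence, apply $\theta_n$ to a vanishing relation $\sum_{\|\rho\|\leq n}c_\rho\Delta_n^\rho=0$. The terms with $\|\rho\|=n$ project to $\Delta_{n-1}^\rho=0$, so by induction all coefficients with $\|\rho\|\leq n-1$ vanish. What remains is $\sum_{\|\rho\|=n}c_\rho C_n^\rho\,\overline{\va}_{\mathbb{N}_n}=0$, that is, $z\,\overline{\va}_{\mathbb{N}_n}=0$ for $z=\sum c_\rho C_n^\rho\in\mZ_{0,n}$. The crucial observation is that the expansion $\overline{\va}_{\mathbb{N}_n}=1+\sum_{\emptyset\neq S\subseteq\mathbb{N}_n}(-1)^{|S|}\va_S$ exhibits $z\,\overline{\va}_{\mathbb{N}_n}$ as $z$ plus a sum in $\F(\ovG_n\setminus G_n)$ (since each $z\va_S$ with $S\neq\emptyset$ has a zero column and therefore lies outside $\F G_n$); projecting to the $\F G_n$-component forces $z=0$, and the linear independence of the conjugacy class sums $C_n^\rho$ with $\|\rho\|=n$ in $\mZ_{0,n}$ then gives $c_\rho=0$ for all $\rho$.

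The main obstacle is the spanning case $k=n$, where Lemma \ref{lem:injective theta} no longer applies and one must actually produce the new basis elements indexed by $\|\rho\|=n$. This relies on Lemma \ref{lem:In(G)and mZ0(Gn)} to identify the kernel of $\theta_n$ on $\overline{\mZ}_{0,n}$ with $\mZ_{0,n}\,\overline{\va}_{\mathbb{N}_n}$, combined with the fortunate coincidence that $\Delta_n^\rho$ agrees with $C_n^\rho\,\overline{\va}_{\mathbb{N}_n}$ precisely when $\|\rho\|=n$, so the "new" generators at the top of the filtration are exactly the missing basis elements. Once this dovetailing is recognized, the remainder is routine bookkeeping.
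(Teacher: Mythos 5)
Your argument is correct and matches the paper's proof in all essentials: induction on $n$, with the injectivity of $\theta_n$ on $\overline{\mZ}_{0,n}^{n-1}$ (Lemma~\ref{lem:injective theta}) handling $k\leq n-1$, and Lemma~\ref{lem:In(G)and mZ0(Gn)} together with the $\F G_n$-versus-$\F(\ovG_n\setminus G_n)$ splitting handling the top degree $\|\rho\|=n$. The only cosmetic difference is that you inline the direct sum decomposition $\overline{\mZ}_{0,n}=\overline{\mZ}^{n-1}_{0,n}\oplus(I(n,G)\cap\overline{\mZ}_{0,n})$ which the paper states explicitly as an intermediate step.
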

\begin{proof}
It suffices to show the second claim of the proposition. We shall
prove this using the induction on $n$. In the case $n=1$, note
that $\ovG_n$ is generated by $G$ and $\va_1$ subject to the
relations $h\va_1=\va_1h=\va_1$ and $\va_1^2=\va_1$ for any $h\in
G$. Hence it can be easily checked that the center
$\overline{\mZ}_{0,1}$ of $\mathbb{F}\ovG_1$ is the subalgebra
generated by the center of $\mathbb{F}G$ and $\va_1$, and then the
proposition follows.

Assume that $n\geq 2$ and $k\leq n-1$. By the induction
hypothesis, the elements $\Delta_{n-1}^{\rho}$ with $\|\rho\|\leq
k$ form a basis of $\overline{\mZ}^k_{0,n-1}$.
Proposition~\ref{prop:Im ofDel} says that the image of
$\Delta_n^{\rho}$ is $\Delta_{n-1}^{\rho}$ for all
$\rho\in\mathcal{P}(G_*)$ with $\|\rho\|<n$. But
Lemma~\ref{lem:injective theta} implies that the restriction of
$\theta_n$ to $\overline{\mZ}^k_{0,n}$ is injective. Therefore
$\Delta_n^{\rho}$ with $\|\rho\|\leq k$ form a basis of
 $\overline{\mZ}^k_{0,n}$.

Next, we shall first show that
\begin{equation}\label{eqn:decomp.of A0(n,G)}
\overline{\mZ}_{0,n}=\overline{\mZ}^{n-1}_{0,n}\oplus (I(n,G)\cap
\overline{\mZ}_{0,n})
\end{equation}
and then the proposition will follow from the claim that the
elements $\Delta_n^{\rho}$ with $\|\rho\|=n$ form a basis of
$I(n,G)\cap \overline{\mZ}_{0,n}$.

Consider the restriction
$\theta_n\downarrow_{\overline{\mZ}_{0,n}}$ of $\theta_n$ to
$\overline{\mZ}_{0,n}$. It follows from the proof of
Lemma~\ref{lem:In(G)and mZ0(Gn)} that the kernel of
$\theta_n\downarrow_{\overline{\mZ}_{0,n}}$ is $I(n,G)\cap
\overline{\mZ}_{0,n}$ and the image is contained in
$\overline{\mZ}_{0,n-1}$. But clearly $\theta_n$ maps
$\overline{\mZ}^{n-1}_{0,n}$ onto
$\overline{\mZ}^{n-1}_{0,n-1}=\overline{\mZ}_{0,n-1}$ as shown
above, and hence we obtain
$$
\overline{\mZ}_{0,n}=\overline{\mZ}^{n-1}_{0,n}+ (I(n,G)\cap
\overline{\mZ}_{0,n})
$$
But Lemma~\ref{lem:injective theta} implies that
$\overline{\mZ}^{n-1}_{0,n}\cap (I(n,G)\cap
\overline{\mZ}_{0,n})=\{0\}$, and then ~(\ref{eqn:decomp.of
A0(n,G)}) follows.

Finally, let us show that the elements $\Delta_n^{\rho}$ with
$\|\rho\|=n$ form a basis of $I(n,G)\cap \overline{\mZ}_{0,n}$.
Note that for each $\rho\in\mathcal{P}(G_*)$ with $\|\rho\|=n$
$$
\Delta_n^{\rho}=C_n^{\rho}(1-\va_1)\cdots(1-\va_n).
$$
Proposition~\ref{prop:basis for mZ(Gn)} says that $C_n^{\rho}$
with $\|\rho\|=n$ form a basis of $\overline{\mZ}_{0,n}$, and
hence $I(n,G)\cap \overline{\mZ}_{0,n}$ is spanned by
$\Delta_n^{\rho}$ with $\|\rho\|=n$ by Lemma~\ref{lem:In(G)and
mZ0(Gn)}. It remains to show that these elements are linearly
independent. Now suppose $\sum_{\rho,
\|\rho\|=n}d_{\rho}\Delta_n^{\rho}=0$ for some
$d_{\rho}\in\mathbb{F}$. Note that the difference
$\Delta_n^{\rho}-C_n^{\rho}$ is spanned by elements from
$\ovG_n\setminus G_n$ while $C_n^{\rho}\in\mathbb{F}G_n$. This
implies $\sum_{\rho, \|\rho\|=n}d_{\rho}C_n^{\rho}=0$, and hence
$d_{\rho}=0$ for all $\rho\in\mathcal{P}(G_*)$ with $\|\rho\|=n$.
\end{proof}
Recall from Corollary~\ref{cor:Deltarho} the definition of
elements $\Delta^{\rho}$ for each $\rho\in\mathcal{P}(G_*)$. The
next theorem follows from Proposition~\ref{prop:basis for ovmZ0}.
\begin{thm}\label{thm:basis for vmZ0}
The elements $\Delta^{\rho},\rho\in\mathcal{P}(G_*)$ form a basis
of the algebra $\overline{\mZ}_0$. Moreover, for any $k\geq 0$,
the elements $\Delta^{\rho}$ with $\|\rho\|\leq k$ form a basis of the
$k$th subspace $\overline{\mZ}_0^k$.
\end{thm}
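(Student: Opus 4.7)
The plan is to deduce the theorem from Proposition~\ref{prop:basis for ovmZ0} and Corollary~\ref{cor:Deltarho} by a standard projective-limit argument, first establishing linear independence of the $\Delta^{\rho}$ and then spanning. Throughout, I will use that $\Delta^\rho \in \overline{\mZ}_0^{\|\rho\|}$, which is clear since $\theta^n(\Delta^\rho)=\Delta_n^\rho$ has degree $\|\rho\|$ for all $n\geq \|\rho\|$.

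For linear independence, I will suppose a finite linear relation $\sum_{\rho} c_\rho \Delta^\rho = 0$ with $c_\rho\in\mathbb{F}$ and let $N=\max\{\|\rho\|:c_\rho\ne 0\}$. Applying the projection $\theta^n:\overline{\mZ}_0\to\overline{\mZ}_{0,n}$ for any $n\geq N$ yields $\sum_\rho c_\rho \Delta_n^\rho=0$ in $\overline{\mZ}_{0,n}^{N}$; by Proposition~\ref{prop:basis for ovmZ0} the elements $\{\Delta_n^\rho:\|\rho\|\leq N\}$ form a basis of $\overline{\mZ}_{0,n}^N$, forcing $c_\rho=0$ for all $\rho$.

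For spanning, I will take an arbitrary $\alpha=(\alpha_n)\in\overline{\mZ}_0^k$, so that $\alpha_n\in\overline{\mZ}_{0,n}^k$ for every $n\geq m$. For each $n\geq k$, Proposition~\ref{prop:basis for ovmZ0} gives unique coefficients $c_\rho^{(n)}\in\mathbb{F}$ (indexed by $\|\rho\|\leq k$) with $\alpha_n=\sum_{\|\rho\|\leq k} c_\rho^{(n)}\Delta_n^\rho$. Applying $\theta_n$ and using Proposition~\ref{prop:Im ofDel} together with the compatibility $\theta_n(\alpha_n)=\alpha_{n-1}$ gives
\[
\sum_{\|\rho\|\leq k} c_\rho^{(n)}\Delta_{n-1}^\rho = \alpha_{n-1}=\sum_{\|\rho\|\leq k} c_\rho^{(n-1)}\Delta_{n-1}^\rho,
\]
and for $n-1\geq k$ the basis property again forces $c_\rho^{(n)}=c_\rho^{(n-1)}$. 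Setting $c_\rho$ to be this common value, the element $\beta:=\sum_{\|\rho\|\leq k} c_\rho\Delta^\rho\in\overline{\mZ}_0^k$ satisfies $\theta^n(\beta)=\alpha_n$ for all $n\geq k$, and also for $m\leq n<k$ by applying further projections $\theta_{k}\circ\cdots\circ\theta_{n+1}$; hence $\beta=\alpha$.

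Combining the two halves shows that $\{\Delta^\rho:\|\rho\|\leq k\}$ is a basis of $\overline{\mZ}_0^k$, and taking the union over $k$ yields the full basis statement. I do not anticipate any real obstacle: the only subtlety is the requirement $n\geq k$ so that $\Delta_n^\rho$ with $\|\rho\|\leq k$ is still a basis of $\overline{\mZ}_{0,n}^k$, which is exactly where Lemma~\ref{lem:injective theta} and Proposition~\ref{prop:basis for ovmZ0} interact in the inductive proof of the latter; once one is above this threshold, the limit procedure is mechanical.
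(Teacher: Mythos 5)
Your argument is correct and is exactly the standard projective-limit bookkeeping that the paper suppresses by simply asserting the theorem ``follows from Proposition~\ref{prop:basis for ovmZ0}.'' Both halves are sound: linear independence by projecting with $\theta^n$ for $n\geq N$ and invoking the basis of $\overline{\mZ}_{0,n}^N$, and spanning by using the compatibility $\theta_n(\alpha_n)=\alpha_{n-1}$ together with uniqueness of coordinates in $\overline{\mZ}_{0,n}^k$ (for $n\geq k$) to see the coefficients stabilize, then checking the low indices by pushing down with further $\theta_j$ (noting $\Delta_n^\rho=0$ when $\|\rho\|>n$, so these equalities are automatic). One cosmetic point: since $m=0$ here, the phrase ``for every $n\geq m$'' should read ``for every $n\geq 0$.''
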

\begin{lem}\label{lem:mult.for Delta}
For any partition-valued functions $\rho, \pi\in\mathcal{P}(G_*)$
with $\|\rho\|+\|\pi\|\leq n$, the following holds in
$\overline{\mZ}_{0,n}$:
$$
\Delta_n^{\rho}\Delta_n^{\pi}=\Delta_n^{\rho\cup\pi}+(\ldots),
$$
where $(\ldots)$ is a linear combination of elements
$\Delta_n^{\varrho}$ with $\|\varrho\|<\|\rho\|+\|\pi\|$.
\end{lem}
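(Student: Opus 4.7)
The plan is to expand $\Delta_n^\rho\Delta_n^\pi$ directly from the definition, identify its top-degree part with $\Delta_n^{\rho\cup\pi}$, and use Proposition~\ref{prop:basis for ovmZ0} to absorb the remainder into a combination of lower-degree $\Delta_n^\varrho$'s. Writing
\[
\Delta_n^\rho\Delta_n^\pi=\sum_{T,S}C^\rho_{n,T}\,\overline{\va}_T\,C^\pi_{n,S}\,\overline{\va}_S
\]
with $|T|=\|\rho\|,\,|S|=\|\pi\|$, the relation $\va_i\beta=\beta\va_{\sigma^{-1}(i)}$ (coming from $\sigma\va_i=\va_{\sigma(i)}\sigma$ in~(\ref{eqn:semigprel})) lets me push $\overline{\va}_T$ past every $\beta=(g,\sigma)\in C^\pi_{n,S}$. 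Since $\sigma$ fixes $\mathbb{N}_n\setminus S$ pointwise one has $\sigma^{-1}(T)\cup S=T\cup S$, and combined with $(1-\va_i)^2=1-\va_i$ this produces $\alpha\,\overline{\va}_T\,\beta\,\overline{\va}_S=\alpha\beta\,\overline{\va}_{T\cup S}$ for every $\alpha\in C^\rho_{n,T}$. I then split the resulting sum according to whether $T\cap S=\emptyset$ or not.

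For $T\cap S=\emptyset$, reindexing by $U=T\cup S$ gives
\[
\sum_{T\cap S=\emptyset}C^\rho_{n,T}C^\pi_{n,S}\,\overline{\va}_{T\cup S}
=\sum_{|U|=\|\rho\|+\|\pi\|}\overline{\va}_U\Bigl(\sum_{\substack{T\subseteq U\\|T|=\|\rho\|}}C^\rho_{n,T}C^\pi_{n,U\setminus T}\Bigr).
\]
I claim the inner sum equals $C^{\rho\cup\pi}_{n,U}$: every element of $C^{\rho\cup\pi}_{n,U}$ is realized as $\alpha\beta$ with $\alpha,\beta$ supported on complementary subsets of $U$ by choosing how to partition its cycles into a $\rho$-block and a $\pi$-block, and the multiplicities built into~(\ref{eqn:CTrho}) — each element of type $\lambda$ appearing in $C^\lambda_{n,T}$ with the uniform weight $\nu_\lambda:=\prod_{i,k}i^{m_i(C_k,\lambda)}\,m_i(C_k,\lambda)!$ — match the multinomial splitting count, since $\nu_\rho\nu_\pi\cdot\prod_{i,k}\binom{m_i(C_k,\rho\cup\pi)}{m_i(C_k,\rho)}=\nu_{\rho\cup\pi}$. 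This is exactly the identity silently invoked in the proof of Lemma~\ref{lem:mult.for C}. Hence the $T\cap S=\emptyset$ contribution is $\sum_UC^{\rho\cup\pi}_{n,U}\overline{\va}_U=\Delta_n^{\rho\cup\pi}$.

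For $T\cap S\neq\emptyset$, each summand $\alpha\beta\,\overline{\va}_{T\cup S}$ expands into terms of the form $\pm\alpha\beta\,\va_Q$ with $Q\subseteq T\cup S$, whose $\ovG_n$-degree is $|J(\alpha\beta)\cup Q|\le|T\cup S|<\|\rho\|+\|\pi\|$ by Lemma~\ref{lem:degree} together with $J(\alpha)\cup J(\beta)\subseteq T\cup S$. Thus the total $T\cap S\neq\emptyset$ contribution lies in $(\F\ovG_n)^{\|\rho\|+\|\pi\|-1}$; being equal to $\Delta_n^\rho\Delta_n^\pi-\Delta_n^{\rho\cup\pi}\in\overline{\mZ}_{0,n}$, Proposition~\ref{prop:basis for ovmZ0} writes it as a combination of $\Delta_n^\varrho$ with $\|\varrho\|<\|\rho\|+\|\pi\|$, completing the proof. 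The main delicate step is the combinatorial identity for the inner sum, which is the direct analog of the counting already made in the proof of Lemma~\ref{lem:mult.for C}.
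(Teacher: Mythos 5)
Your proof is correct and follows the same decomposition as the paper's (split the double sum into $T\cap S=\emptyset$ and $T\cap S\neq\emptyset$, identify the first piece with $\Delta_n^{\rho\cup\pi}$, degree-bound the second). You simply fill in the details — pushing $\overline{\va}_T$ across $C^\pi_{n,S}$, the weight count $\nu_\rho\nu_\pi\prod\binom{m_i(C_k,\rho\cup\pi)}{m_i(C_k,\rho)}=\nu_{\rho\cup\pi}$, and the explicit appeal to Proposition~\ref{prop:basis for ovmZ0} — that the paper's two-sentence proof leaves implicit.
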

\begin{proof}
By~(\ref{defn:Delta_nrho}), we have
\begin{equation}\label{eqn: decom.of mult for Delta}
\Delta_n^{\rho}\Delta_n^{\pi}=\sum_{T\cap S=\emptyset}
C^{\rho}_{n,T}\overline{\va}_TC^{\pi}_{n,S}\overline{\va}_S+
\sum_{T\cap
S\neq\emptyset}C^{\rho}_{n,T}\overline{\va}_TC^{\pi}_{n,S}\overline{\va}_S,
\end{equation}
where $T$ and $S$ are subsets of $\mathbb{N}_n$ of order
$\|\rho\|$ and $\|\pi\|$, respectively. Note that the first sum on
the right hand side of (\ref{eqn: decom.of mult for Delta}) is
$\Delta_n^{\rho\cup\pi}$ whereas the second sum is of degree
strictly less than $\|\rho\|+\|\pi\|$.
\end{proof}

Clearly Theorem~\ref{thm:basis for vmZ0} and
Lemma~\ref{lem:mult.for Delta} give rise to the following.
\begin{cor}
Let
$l=(l^1_1,\ldots,l^1_n,l^2_1,\ldots,l^2_n,\ldots,l^r_1,\ldots,l^r_n)$
run over the $nr$-tuples of nonnegative integers such that
$\sum_{k=1}^rl_1^k+\sum_{k=1}^r2l_2^k+\cdots+\sum_{k=1}^rnl_n^k\leq
n$. Then the monomials
\begin{equation}\label{monomials2}
\left((\Delta_n^{\rho^1_1})^{l^1_1}(\Delta_n^{\rho^2_1})^{l^2_1}\cdots(\Delta_n^{\rho^r_1})^{l^r_1}\right)
\cdots
\left((\Delta_n^{\rho^1_n})^{l^1_n}(\Delta_n^{\rho^2_n})^{l^2_n}\cdots(\Delta_n^{\rho^r_n})^{l^r_n}\right)
\end{equation}
form a basis of $\overline{\mZ}_{0,n}$. Moreover, for any $0\leq
t\leq n$, the monomials~(\ref{monomials2}) with
$\sum_{k=1}^rl_1^k+\sum_{k=1}^r2l_2^k+\cdots+\sum_{k=1}^rnl_n^k\leq
t$ form a basis of $\overline{\mZ}_{0,n}^t$.
\end{cor}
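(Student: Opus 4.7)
The plan is to deduce this corollary from Theorem~\ref{thm:basis for vmZ0} (more precisely, its finite-$n$ version, Proposition~\ref{prop:basis for ovmZ0}) together with the multiplicativity Lemma~\ref{lem:mult.for Delta}, via a standard ``leading term'' / triangularity argument. The key observation is that every $\rho\in\mathcal{P}(G_*)$ is determined by the multiplicities $l^k_i$ of the part $i$ in $\rho(C_k)$, so that $\rho$ decomposes canonically as the union
\[
\rho \;=\; \bigcup_{k=1}^{r}\bigcup_{i=1}^{n}\underbrace{\rho^k_i\cup\cdots\cup\rho^k_i}_{l^k_i\ \text{copies}},
\qquad \|\rho\|=\sum_{k,i} i\,l^k_i .
\]
Hence the index set of tuples $l$ appearing in the corollary is in natural bijection with $\{\rho\in\mathcal{P}(G_*):\|\rho\|\le n\}$, and under this bijection the filtration condition $\sum_{k,i} i\,l^k_i\le t$ corresponds to $\|\rho\|\le t$.

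First I would apply Lemma~\ref{lem:mult.for Delta} iteratively to the product in~(\ref{monomials2}). An induction on the number of factors shows that
\[
M_l \;:=\;\prod_{i=1}^{n}\prod_{k=1}^{r}(\Delta_n^{\rho^k_i})^{l^k_i}
\;=\;\Delta_n^{\rho_l}\;+\;\sum_{\|\varrho\|<\|\rho_l\|} c_{\varrho}^{l}\,\Delta_n^{\varrho},
\]
where $\rho_l\in\mathcal{P}(G_*)$ is the partition-valued function with $\rho_l(C_k)=(1^{l^k_1}2^{l^k_2}\cdots n^{l^k_n})$; in particular $\text{deg}(M_l)=\|\rho_l\|=\sum_{k,i} i\,l^k_i$, and $M_l\in\overline{\mZ}_{0,n}^{\,t}$ whenever $\sum_{k,i} i\,l^k_i\le t$.

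Next, order the index set $\{l:\sum_{k,i} i\,l^k_i\le n\}$ by total degree $\|\rho_l\|$. The formula above shows that the transition matrix expressing the monomials $M_l$ in the basis $\{\Delta_n^{\rho}:\|\rho\|\le n\}$ of $\overline{\mZ}_{0,n}$ (Proposition~\ref{prop:basis for ovmZ0}) is block upper-triangular with respect to this degree filtration, and the diagonal block at total degree $t$ is the identity matrix under the bijection $l\leftrightarrow \rho_l$. Consequently the $M_l$ are linearly independent and span $\overline{\mZ}_{0,n}$, so they form a basis. Restricting to tuples with $\sum_{k,i} i\,l^k_i\le t$ and using that the $\Delta_n^{\rho}$ with $\|\rho\|\le t$ form a basis of $\overline{\mZ}_{0,n}^{\,t}$, the same triangularity yields the graded statement.

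I do not expect a real obstacle: the only point to be careful about is bookkeeping of the bijection $l\leftrightarrow \rho_l$ and making explicit that ``lower order terms'' in Lemma~\ref{lem:mult.for Delta} means strictly lower $\|\cdot\|$-degree, so that the triangularity is genuine. Everything else is formal linear algebra once Lemma~\ref{lem:mult.for Delta} and Proposition~\ref{prop:basis for ovmZ0} are in hand.
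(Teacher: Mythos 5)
Your proof is correct and uses exactly the ingredients the paper invokes: the paper simply asserts ``Clearly Theorem~\ref{thm:basis for vmZ0} and Lemma~\ref{lem:mult.for Delta} give rise to the following,'' leaving the triangularity argument implicit, while you have spelled out the bijection $l\leftrightarrow\rho_l$ and the unitriangular (in the $\|\cdot\|$-filtration) transition matrix that makes it go. Your choice to cite the finite-$n$ statement Proposition~\ref{prop:basis for ovmZ0} rather than Theorem~\ref{thm:basis for vmZ0} is if anything slightly more precise, since the corollary concerns $\overline{\mZ}_{0,n}$ at a fixed $n$.
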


\begin{cor}
The monomials
\begin{equation}\label{eqn:monomials2}
\left((\Delta^{\rho^1_1})^{l^1_1}(\Delta^{\rho^2_1})^{l^2_1}\cdots(\Delta^{\rho^r_1})^{l^r_1}\right)
\left((\Delta^{\rho^1_2})^{l^1_2}(\Delta^{\rho^2_2})^{l^2_2}\cdots(\Delta^{\rho^r_2})^{l^r_2}\right)
\cdots
\end{equation} for $l_i^k\in\mathbb{Z}_+$ with
$\sum_{i\geq 1}\sum_{k=1}^rl_i^k<\infty$ form a basis of the
algebra $\overline{\mZ}_0$. Moreover for each $t\geq 0$, the
monomials~(\ref{eqn:monomials2}) with $\sum_{i\geq
1}\sum_{k=1}^rl_i^k\leq t$ form a basis of the subspace
$\overline{\mZ}_0^t$.
\end{cor}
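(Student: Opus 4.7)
The plan is to transfer the $\Delta^\rho$-basis of $\overline{\mZ}_0$ from Theorem~\ref{thm:basis for vmZ0} to the proposed monomial basis via a unitriangular change-of-basis argument, using the multiplication formula of Lemma~\ref{lem:mult.for Delta}. This is the projective-limit analog of the preceding corollary for the finite-level algebras $\overline{\mZ}_{0,n}$, and given the finite case the passage to the limit is essentially automatic.

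First I would set up the combinatorial bookkeeping. Given a finitely-supported tuple $l=(l_i^k)_{i\geq 1,\,1\leq k\leq r}$ of nonnegative integers, let $\rho(l)\in\mathcal{P}(G_*)$ be the partition-valued function with $\rho(l)(C_k)=(1^{l_1^k}2^{l_2^k}\cdots)$; equivalently, $\rho(l)=\bigcup_{i,k}(\rho_i^k)^{l_i^k}$. This yields a bijection between such tuples and $\mathcal{P}(G_*)$ under which $\|\rho(l)\|=\sum_{i,k}i\,l_i^k$.

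The key step is to establish, by induction on $\sum_{i,k}l_i^k$ and iterated application of Lemma~\ref{lem:mult.for Delta}, the identity
\[
\prod_{k=1}^{r}\prod_{i\geq 1}(\Delta^{\rho_i^k})^{l_i^k}
= \Delta^{\rho(l)} + \sum_{\varrho:\ \|\varrho\|<\|\rho(l)\|} c_{\varrho}\,\Delta^{\varrho}
\]
in $\overline{\mZ}_0$, for some $c_{\varrho}\in\F$. Lemma~\ref{lem:mult.for Delta} is stated at each level $n$, but multiplication in $\overline{\mZ}_0$ is componentwise and $\theta^n(\Delta^\rho)=\Delta_n^\rho$ by Corollary~\ref{cor:Deltarho}, so the finite-level expansion lifts directly to the limit; appending a new factor $\Delta^{\rho_i^k}$ to a partially-computed product contributes the leading term $\Delta^{\varrho\cup\rho_i^k}$ plus correction terms of strictly smaller $\|\cdot\|$.

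Finally, order $\mathcal{P}(G_*)$ by any linear refinement of the partial order induced by $\|\cdot\|$. The transition matrix from the monomial family (indexed by tuples $l$) to the basis $\{\Delta^\rho\}$ of Theorem~\ref{thm:basis for vmZ0} (indexed by $\rho\in\mathcal{P}(G_*)$, via $l\leftrightarrow\rho(l)$) is then unitriangular, hence invertible; so the monomials form a basis of $\overline{\mZ}_0$. For the filtration statement, every monomial indexed by $l$ lies in $\overline{\mZ}_0^{\|\rho(l)\|}$ by Lemma~\ref{lem:degree}, while $\overline{\mZ}_0^t$ has basis $\{\Delta^\rho:\|\rho\|\leq t\}$ by Theorem~\ref{thm:basis for vmZ0}, so the same triangularity restricts to produce a basis of $\overline{\mZ}_0^t$ indexed by the tuples $l$ with $\sum_{i,k}i\,l_i^k\leq t$. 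The only minor point needing care is the compatibility of the multiplication formula across levels, which is immediate from $\theta_n$ being an algebra homomorphism on each $\overline{\mZ}_{0,n}$; no substantive obstacle arises.
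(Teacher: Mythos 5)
Your argument is correct and is precisely the argument the paper intends: the paper's own proof is the one-liner ``Clearly Theorem~\ref{thm:basis for vmZ0} and Lemma~\ref{lem:mult.for Delta} give rise to the following,'' and your filling-in via the bijection $l\leftrightarrow\rho(l)$, the level-independence of the structure constants (which you correctly justify by the compatibility of $\theta_n$ with multiplication), and the resulting unitriangularity of the transition matrix against the $\Delta^\rho$-basis is exactly how to make that precise.

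One small but real point you have implicitly uncovered: you derive the filtration condition $\sum_{i,k}i\,l_i^k\leq t$, which equals $\|\rho(l)\|\leq t$ and matches the preceding finite-level corollary for $\overline{\mZ}_{0,n}^t$. The corollary as printed instead states $\sum_{i\geq 1}\sum_{k=1}^{r}l_i^k\leq t$, which appears to be a typo: already for $t=1$ that condition would admit infinitely many monomials $\Delta^{\rho^k_i}$ with $i\geq 1$, all supposedly lying in the finite-dimensional space $\overline{\mZ}_0^1$ whose $\Delta$-basis has only $r+1$ elements. Your version is the correct one.
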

\section{The structure of the algebras $\overline{\mZ}_m, m>0$}\label{mZm}
In this section we shall study the algebraic structure of the
algebra $\overline{\mZ}_m$ for $m>0$. Let us first introduce a
subalgebra of $\F\ovG_n$ which plays the similar role as the
center $\mZ_{0,n}$ of $\F G_n$ in the case $m=0$. Set
\begin{equation}\label{defn;Ga(m,n,G)}
\ovG_{m,n}=\{\ga\in\ovG_n|~\text{for }m+1\leq i\leq n, \text{
there exist } 1\leq j, k\leq n \text{ such that }\ga_{ij}\neq 0,
\ga_{ki}\neq 0\}
\end{equation}
That is, $\ovG_{m,n}$ consists of all elements whose $i$th row and
$i$th column are nonzero for each $m+1\leq i\leq n$. Denote by
$\mathbb{F}\ovG_{m,n}$ the subspace of $\F\ovG_n$ spanned by
$\ovG_{m,n}$. Recall that $G^{\prime}_{n-m}$ is the subgroup of
$G_n$ consisting of matrices whose first $m$ diagonal elements are
$1$. Set $\mZ^*_{m,n}$ to be the subspace of
$\mathbb{F}\ovG_{m,n}$ formed by the elements invariant under the
conjugation by the elements of the group $G^{\prime}_{n-m}$, that
is, we have
$$
\mZ^*_{m,n}=(\F\ovG_{m,n})^{G^{\prime}_{n-m}}.
$$

\subsection{Bases of $\mZ^*_{m,n}$}
Throughout the section we assume $0< m\leq n$. Let us begin with
introducing another filtration on the algebra $\F\ovG_n$. For
$\ga\in\ovG_n$, set
\begin{align}
J_m(\ga)&=\{k|~m+1\leq k\leq n, \ga_{kk}\neq 1\},\notag\\
\bar{J}_m(\ga)&=\{k|~m+1\leq k\leq n, \ga_{kk}=1\}=\{m+1,\cdots,n\}\setminus J_m(\ga),\label{defn:barJ-m}\\
\text{deg}_m(\ga)&=|J_m(\ga)|,\label{defn: degree-m}
\end{align}
We will call $\text{deg}_m(\ga)$ the $m$-degree of $\ga$.

\begin{lem}
For $\ga,\ga^{\prime}\in\ovG_n$, we have
\begin{equation}\label{eqn:mult.formula for degree 2}
\text{deg}_m(\ga\ga^{\prime})\leq
\text{deg}_m(\ga)+\text{deg}_m(\ga^{\prime}).
\end{equation}
\end{lem}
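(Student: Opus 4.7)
The plan is to mimic the proof of Lemma~\ref{lem:degree} essentially verbatim, restricting attention to diagonal indices in the range $\{m+1,\ldots,n\}$ rather than all of $\mathbb{N}_n$. The $m$-degree is simply the number of ``bad'' diagonal positions in this restricted range, so the same combinatorial identity on complements should carry through.

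The key observation to reuse is the one from Lemma~\ref{lem:degree}: if $\gamma_{kk}=1=\gamma'_{kk}$, then $(\gamma\gamma')_{kk}=1$. This holds because each row and column of an element of $\ovG_n$ carries at most one nonzero entry; the equalities $\gamma_{kk}=1$ and $\gamma'_{kk}=1$ force every other entry of row $k$ and column $k$ (in $\gamma$ and $\gamma'$ respectively) to vanish, so the matrix product collapses to $(\gamma\gamma')_{kk}=\gamma_{kk}\gamma'_{kk}=1$.

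Restricting this observation to $k \in \{m+1,\ldots,n\}$ gives the containment
$$\bar{J}_m(\gamma)\cap \bar{J}_m(\gamma')\subseteq \bar{J}_m(\gamma\gamma').$$
Taking complements inside $\{m+1,\ldots,n\}$ converts this to $J_m(\gamma\gamma')\subseteq J_m(\gamma)\cup J_m(\gamma')$, and then the inequality
$$\text{deg}_m(\gamma\gamma')=|J_m(\gamma\gamma')|\leq |J_m(\gamma)|+|J_m(\gamma')|=\text{deg}_m(\gamma)+\text{deg}_m(\gamma')$$
follows immediately from the subadditivity of cardinality under unions. There is no serious obstacle: the argument is entirely parallel to Lemma~\ref{lem:degree}, the only change being the range of the index $k$, and the matrix-product calculation does not interact with the cutoff $m$.
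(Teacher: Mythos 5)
Your proof is correct and takes essentially the same approach as the paper: both establish the containment $\bar{J}_m(\gamma)\cap \bar{J}_m(\gamma')\subseteq \bar{J}_m(\gamma\gamma')$, pass to complements in $\{m+1,\ldots,n\}$ to get $J_m(\gamma\gamma')\subseteq J_m(\gamma)\cup J_m(\gamma')$, and conclude by subadditivity of cardinality. You supply slightly more detail on why $\gamma_{kk}=1=\gamma'_{kk}$ forces $(\gamma\gamma')_{kk}=1$, but the argument is the same.
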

\begin{proof}
For any $m+1\leq k\leq n$, if $k\in \bar{J}_m(\ga)\cap
\bar{J}_m(\ga^{\prime})$ then $k\in \bar{J}_m(\ga\ga^{\prime})$.
Hence $J_m(\ga\ga^{\prime})\subseteq J_m(\ga)\cup
J_m(\ga^{\prime})$ and (\ref{eqn:mult.formula for degree 2})
follows.
\end{proof}

For each $0\leq k\leq n-m$, set $\Ga_m^k(\F\ovG_n)$ to be the
subspace of $\F\ovG_n$ spanned by the elements $\ga\in\ovG_n$ of
$m$-degree less than or equal to $k$. Then we get a new filtration
of $\F\ovG_n$:
\begin{equation}\label{m-filtration}
\F\ovG_m=\Ga_m^0(\F\ovG_n)\subseteq
\Ga_m^1(\F\ovG_n)\subseteq\cdots\subseteq
\Ga_m^{n-m}(\F\ovG_n)=\F\ovG_n
\end{equation}
For any subspace $S$ of $\F\ovG_n$ we will use the symbol
$\Ga_m^k(S)$ to indicate the $k$th term of the induced filtration.

Next we shall classify the $G^{\prime}_{n-m}$-orbits by
conjugation in $\ovG_{m,n}$ in order to get bases of
$\mZ^*_{m,n}$. Note that the additive semigroup $\mathbb{Z}_+$ is
isomorphic to the free abelian semigroup $\{1,z,z^2,\ldots\}$ with
unity $1$ and one generator $z$. Consider the semigroup product
$G\times \mathbb{Z}_+=\{(g,z^k)|~g\in G,k=0,1,2,\ldots\}$ and set
$\text{ord}(g,z^k)=k$ for $(g,z^k)\in G\times \mathbb{Z}_+$.
Clearly $G$ can be regarded as a subgroup of $G\times
\mathbb{Z}_+$.  Denote by $\Ga(n,G\times
\mathbb{Z}_+)~(\text{resp.} S(n,G\times \mathbb{Z}_+))$ the set
consisting of the $n\times n$ matrices
$\Omega=(\Omega_{ij})_{n\times n}$ with entries in $(G\times
\mathbb{Z}_+)\cup\{0\}$ such that any row and column contain at
most~(resp. exactly) one nonzero entry. For any
$\Omega\in\Ga(n,G\times \mathbb{Z}_+)$, set
$$
\text{ord}(\Omega)=\sum_{i,j;\Omega_{ij}\neq
0}\text{ord}(\Omega_{ij}).
$$

\begin{prop}\label{prop:desc. of conjugate orbits}
There is a natural parametrization of the
$G_{n-m}^{\prime}$-orbits in $\ovG_{m,n}$ by the pair
$(\Omega,\rho)$ with $\Omega\in \Ga(m, G\times \mathbb{Z}_+)$ and
$\rho\in\mathcal{P}(G_*)$ satisfying
$\text{ord}(\Omega)+||\rho||=n-m$.

\end{prop}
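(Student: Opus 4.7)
The plan is to view each $\gamma \in \ovG_{m,n}$ as a labelled directed graph on $\{1,\ldots,n\}$: there is an edge $j \to i$ with label $\gamma_{ij} \in G$ whenever $\gamma_{ij} \ne 0$. By the definition of $\ovG_{m,n}$, every index $k > m$ has both in-degree and out-degree equal to $1$, while every index in $\{1,\ldots,m\}$ has in- and out-degree at most $1$. Hence the graph decomposes as a disjoint union of cycles and of paths, and every path endpoint necessarily lies in $\{1,\ldots,m\}$.

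I would attach the pair $(\Omega_\gamma, \rho_\gamma)$ to $\gamma$ as follows. For each cycle contained entirely in $\{m+1,\ldots,n\}$ of length $\ell$, record the conjugacy class $C \in G_*$ of its cycle-product, which is well defined since cyclic reindexing of the basepoint conjugates the product; letting $m_i(C)$ be the number of length-$i$ cycles with cycle-product in $C$, this defines $\rho_\gamma(C) = (1^{m_1(C)} 2^{m_2(C)}\cdots)$. For the remaining components I start at each $a \in \{1,\ldots,m\}$ with an outgoing edge and trace the unique maximal sub-arc $a = v_0 \to v_1 \to \cdots \to v_{k+1} = b$ with $v_1,\ldots,v_k \in \{m+1,\ldots,n\}$ and $b \in \{1,\ldots,m\}$; writing $g_s$ for the label on $v_{s-1}\to v_s$, set $\Omega_{b,a} := (g_{k+1}g_k\cdots g_1,\, z^k)$, and leave column $a$ of $\Omega$ zero when $a$ has no outgoing edge. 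The at-most-one-nonzero condition on rows and columns of $\Omega$ is automatic, and the identity $\mathrm{ord}(\Omega_\gamma) + \|\rho_\gamma\| = n-m$ reflects that every index $> m$ belongs to exactly one cycle or one intermediate position of a macro-arc.

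Three claims remain: (i) $(\Omega_\gamma, \rho_\gamma)$ is invariant under $G^{\prime}_{n-m}$-conjugation; (ii) every allowed pair is realised; (iii) the pair determines the orbit. Invariance follows directly from (\ref{eqn:semigprel}): conjugation by $\sigma \in S_{n-m}$ just relabels intermediate $>m$ vertices, while conjugation by $h^{(v)}$ for $v > m$ replaces a consecutive pair of labels $(g_s, g_{s+1})$ at $v = v_s$ by $(h g_s, g_{s+1} h^{-1})$, preserving macro-arc products and, up to conjugation, cycle-products. Surjectivity is handled by an explicit construction: arrange each macro-arc and each cycle on a disjoint block of indices in $\{m+1,\ldots,n\}$, assigning label $1$ to every edge except the last one, which absorbs the prescribed group element; the counting condition on $(\Omega,\rho)$ guarantees that exactly $n-m$ indices are used.

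The main obstacle is (iii). Given $\gamma, \gamma'$ with $(\Omega_\gamma, \rho_\gamma) = (\Omega_{\gamma'}, \rho_{\gamma'})$, I first apply a permutation $\sigma \in S_{n-m}$ to move the intermediate indices occupied by each macro-arc and each cycle of $\gamma$ onto the corresponding block for $\gamma'$ with matching cyclic order; this is possible because the pair fixes the combinatorial shape (source, target, length) of every macro-arc and the multiset of lengths of cycles within each conjugacy class. After this reduction, $\gamma$ and $\gamma'$ share the same underlying unlabelled graph and differ only in $G$-labels. On a macro-arc with intermediate vertices $v_1,\ldots,v_k$ and common label-product $g$, successive conjugations by $h^{(v_1)},h^{(v_2)},\ldots,h^{(v_k)}$ with appropriately chosen $h$'s can match the first $k$ labels one by one (each slide modifies only two consecutive labels), after which the last label is forced by the product condition. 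On a pure cycle, the same slide argument matches all labels except at one basepoint vertex, and a single extra conjugation by some $h^{(v)}$ at that basepoint realises the $G$-conjugation needed to equate the two cycle-products; this last move works precisely because $\rho$ records only the conjugacy classes of cycle-products. Composing these moves produces the required element of $G^{\prime}_{n-m}$.
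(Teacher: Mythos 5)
Your proof is correct and takes essentially the same approach as the paper: your $(\Omega_\gamma,\rho_\gamma)$, read off macro-arcs and interior cycles of the labelled digraph, coincides with the paper's $(\Omega(\gamma),\rho(\gamma))$, and your permutation-plus-slide argument is exactly what the paper's terse final sentence (``the above description of conjugation \ldots implies that two elements corresponding to the same $\Omega$ and $\rho$ will be in the same orbit'') is invoking. One minor ordering slip in the cycle case: after sliding to match all labels except at the basepoint $v$, conjugating by $h^{(v)}$ also alters the label on the edge leaving $v$, so one should first conjugate at $v$ to make the two cycle-products literally equal and only then slide.
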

\begin{proof}
We shall first define a map
$$
\mathcal{F}: \ovG_{m,n}\rightarrow \Ga(m, G\times
\mathbb{Z}_+)\times \mathcal{P}(G_*),\qquad \ga\mapsto
(\Omega(\ga),\rho(\ga)).
$$
For $\ga\in \ovG_{m,n}$, the $m\times m$-matrix $\Omega(\ga)$ is
given as follows.  Fix $1\leq j\leq m$, then for each $1\leq i\leq
m$ set
\begin{eqnarray}
\Omega(\gamma)_{ij}=\left\{
\begin{array}{lll}
\gamma_{ij}, \quad\text{ if } \gamma_{ij}\neq 0,\\
(\gamma_{ip_k}\gamma_{p_kp_{k-1}}\ldots \gamma_{p_1j},z^k),
\quad\text{ if there exist } m+1\leq
p_1,\ldots,p_k\leq n\notag\\
\text{ such that }\gamma_{p_1j}\neq 0, \gamma_{p_2p_1}\neq
0,\ldots,\gamma_{p_kp_{k-1}}\neq 0, \gamma_{ip_k}\neq 0,\label{P-subsets}\\
0, \quad\text{otherwise}.
\end{array}\right.
\end{eqnarray}

Observe that to any $j$ with $\gamma_{pj}\neq 0$ for some $p\in
\{m+1,\ldots,n\}$, we can assign a subset $\{p_1,p_2,\ldots,p_k\}$
of $\{m+1,\ldots,n\}$ as described above. It is clear that these
subsets are pairwise disjoint. Let $P(\gamma)$ denote their union,
then $\text{ord}(\Omega(\gamma))=|P(\gamma)|$. Further, let
$P^*(\gamma)$ be the complement of $P(\gamma)$ in
$\{m+1,\ldots,n\}$. Suppose $P^*(\gamma)=\{i_1,i_2\ldots,i_l\}$
with $i_1<i_2<\ldots <i_l$. Define
$\sigma(\ga)=(\sigma_{st})_{l\times l}$ as
$\sigma_{st}=\gamma_{i_s i_t}$ for $1\leq s,t\leq l$. Then
$\sigma\in G_l$ since the $i$th row and $i$th column of $\ga$ are
nonzero for $m+1\leq i\leq n$. Define $\rho=\rho(\gamma)$ to be
the type of $\sigma$ in $G_l$.

It can be verified that if $\ga$ and $\ga^{\prime}$ belong to the
same $G^{\prime}_{n-m}$-orbit in $\ovG_{m,n}$ then
$(\Omega(\ga),\rho(\ga))=(\Omega(\ga^{\prime}),\rho(\ga^{\prime}))$.
 Indeed, suppose $\ga^{\prime}=\be\ga\be^{-1}$ for some $\be\in
G^{\prime}_{n-m}$. Clearly for $1\leq i,j\leq m$,
$\ga_{ij}=(\be\ga\be^{-1})_{ij}$. Hence it suffices to show that
$\Omega(\ga)_{ij}=\Omega(\be\ga\be^{-1})_{ij}$ in the case when
there exist $p_1, p_2,\ldots,p_k\in \{m+1,\ldots,n\}$ such that
$\gamma_{p_1j}\neq 0, \gamma_{p_2p_1}\neq
0,\ldots,\gamma_{p_kp_{k-1}}\neq 0, \gamma_{ip_k}\neq 0$. Since
$\be\in G_{n-m}^{\prime}$, there exist $m+1\leq q_l\leq n$ such
that $\be_{q_lp_l}\neq 0$ for $1\leq l\leq k$. Then it can be easily checked that
\begin{align*}
(\be\ga\be^{-1})_{q_1j}&=\be_{q_1p_1}\ga_{p_1j}\be_{jj}=\be_{q_1p_1}\ga_{p_1j}
,\\
(\be\ga\be^{-1})_{q_lq_{l-1}}&=\be_{q_lp_l}\ga_{p_lp_{l-1}}\be_{q_{l-1}p_{l-1}}^{-1},\quad 2\leq l\leq k,\\
(\be\ga\be^{-1})_{iq_k}&=\be_{ii}\ga_{ip_k}\be_{q_kp_k}^{-1}=\ga_{ip_k}\be_{q_kp_k}^{-1}.
\end{align*}
This means
$(\Omega(\be\ga\be^{-1}))_{ij}=(\gamma_{ip_k}\gamma_{p_kp_{k-1}}\ldots
\gamma_{p_1j},z^k)=\Omega(\ga)_{ij}$. On the other hand, the above
description of conjugation $\be\ga\be^{-1}$ of $\ga\in\ovG_{m,n}$
by $\be\in G^{\prime}_{n-m}$ implies that two elements
corresponding to the same $\Omega$ and $\rho$ will be in the same
orbit.
\end{proof}


\begin{rem}\label{Gn-mP Or Gn}
The pair $(\Omega,\rho)$ in Proposition~\ref{prop:desc. of
conjugate orbits} corresponds to an element of
$G_n\subseteq\ovG_{m,n}$ if and only if $\Omega\in S(m, G\times
\mathbb{Z}_+)$. Hence by the same argument of the proof of
Proposition~\ref{prop:desc. of conjugate orbits}, we conclude that
there is a natural parametrization of the
$G_{n-m}^{\prime}$-orbits in $G_n$ by the pair $(\Omega,\rho)$
with $\Omega\in S(m, G\times \mathbb{Z}_+)$ and
$\rho\in\mathcal{P}(G_*)$ satisfying
$\text{ord}(\Omega)+||\rho||=n-m$.
\end{rem}

We shall now define analogs of the elements $C_n^{\rho}$ in
$\mZ^*_{m,n}$. Firstly, for any
$\Omega\in\Ga(m,G\times\mathbb{Z}_+)$ and any subset
$P\subseteq\{m+1,\ldots,n\}$ with $\text{ord}(\Omega)=|P|$, we set
$$
\Ga(\Omega,P)=\{\ga\in\ovG_{m,n}|~\Omega(\ga)=\Omega, P(\ga)=P,
\rho(\ga)=\underbrace{\rho^1_1\cup\ldots\cup\rho^1_1}_{n-m-|P|}\}.
$$
Then for any pair $(\Omega,\rho)$ with
$\Omega\in\Ga(m,G\times\mathbb{Z}_+)$ and
$\rho\in\mathcal{P}(G_*)$ such that
$\text{ord}(\Omega)+\|\rho\|\leq n-m$, we set
\begin{equation}\label{defn:elementC}
C_n^{\Omega,\rho}=\sum_{P,T}\sum_{\ga\in\Ga(\Omega,P)}\ga
C^{\rho}_{n,T},
\end{equation}
where $P,T$ are disjoint subsets in $\{m+1,\ldots,n\}$ such that
$$|P|=\text{ord}(\Omega),\qquad |T|=\|\rho\|.$$
\begin{lem}\label{basis for Zm(n,G)}
The centralizer $\mZ^*_{m,n}$ of $G_{n-m}^{\prime}$ in
$\mathbb{F}\ovG_{m,n}$ has two bases:
\begin{enumerate}

\item $C_n^{\Omega,\rho}, \text{ where }
\Omega\in\Ga(m,G\times\mathbb{Z}_+),\rho\in\mathcal{P}(G_*) \text{
and }\text{ord}(\Omega)+\|\rho\|=n-m;$

\item $C_n^{\Omega,\rho}, \text{ where }
\Omega\in\Ga(m,G\times\mathbb{Z}_+),\rho\in\mathcal{P}(G_*),
\text{ord}(\Omega)+\|\rho\|\leq n-m \text{ and } \rho(C_1)$ $\text{has
no part equal to }1.$
\end{enumerate}
\end{lem}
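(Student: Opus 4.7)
The plan is to match both collections with the basis of $\mZ^*_{m,n}$ formed by $G^{\prime}_{n-m}$-orbit sums on $\ovG_{m,n}$, whose orbits are enumerated by Proposition~\ref{prop:desc. of conjugate orbits}. First I would verify that every $C_n^{\Omega,\rho}$ defined by~(\ref{defn:elementC}) lies in $\mZ^*_{m,n}$: conjugation by $\be\in G^{\prime}_{n-m}$ simply permutes the data $(P,T,\ga,\text{sequence})$ indexing the sum, sending $\Gamma(\Omega,P)$ bijectively onto $\Gamma(\Omega,\be(P))$ and transporting $C^\rho_{n,T}$ to $C^\rho_{n,\be(T)}$. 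Since $\ovG_{m,n}$ is stable under this conjugation, the orbit sums form a basis of $\mZ^*_{m,n}$.

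For the first basis, fix $(\Omega,\rho)$ with $\text{ord}(\Omega)+\|\rho\|=n-m$, so that $P$ and $T$ in~(\ref{defn:elementC}) must partition $\{m+1,\ldots,n\}$. The heart of the argument is that each $\ga'$ in the corresponding orbit admits a unique block factorization $\ga'=\ga\cdot\sigma$, where $\ga\in\Gamma(\Omega,P(\ga'))$ encodes the paths of~(\ref{P-subsets}) and acts as the identity on $T=P^*(\ga')$, while $\sigma$ is supported on $T$ with type $\rho$. What makes this a genuine matrix product is that for $i\in T$ and $j\notin T$ one has $\ga'_{ij}=\ga'_{ji}=0$; otherwise $i$ would lie on a path and thus in $P(\ga')$. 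Each type-$\rho$ cycle decomposition on $T$ is realized by a common nonzero integer number $z_\rho$ of sequences in~(\ref{eqn:CTrho}), so expanding~(\ref{defn:elementC}) yields
\[
C_n^{\Omega,\rho}=z_\rho\cdot(\text{orbit sum for }(\Omega,\rho)),
\]
and basis~(1) follows.

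For the second basis, I would establish the wreath analogue of~(\ref{eqn:reln for ccs}),
\[
C_n^{\Omega,\rho\cup\rho^1_1}=\bigl(n-m-\text{ord}(\Omega)-\|\rho\|\bigr)\,C_n^{\Omega,\rho}.
\]
This reduces to substituting the elementary identity $C^{\rho\cup\rho^1_1}_{n,T'}=\sum_{i\in T'}C^\rho_{n,T'\setminus\{i\}}$ into~(\ref{defn:elementC}), relabelling $T=T'\setminus\{i\}$, and counting the $n-m-\text{ord}(\Omega)-\|\rho\|$ admissible indices $i\in\{m+1,\ldots,n\}\setminus(P\cup T)$. Iterating the recurrence, each basis-(1) element $C_n^{\Omega,\rho}$ is a scalar multiple of the basis-(2) element $C_n^{\Omega,\rho_0}$ obtained by stripping every $(1)$-part from $\rho(C_1)$. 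Since this stripping defines a bijection between the two index sets, collection~(2) has the right cardinality and spans $\mZ^*_{m,n}$, so it is also a basis.

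The main obstacle is the orbit-sum identification in step~(1): verifying that the factorization $\ga'=\ga\cdot\sigma$ is well-defined and reproduces $\ga'$ exactly under matrix multiplication in $\ovG_{m,n}$, and that the multiplicity $z_\rho$ is independent of the particular cycle decomposition of type $\rho$ on $T$. Once this alignment with Proposition~\ref{prop:desc. of conjugate orbits} is in place, the recurrence step and the bijection argument for~(2) are routine combinatorial computations.
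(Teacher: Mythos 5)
Your proof is correct and follows the same strategy as the paper's: identify the family (1) with $G'_{n-m}$-orbit sums on $\ovG_{m,n}$ via Proposition~\ref{prop:desc. of conjugate orbits}, then reduce (2) to (1) through the recurrence obtained by stripping a $\rho^1_1$-part. The paper dispatches step (1) with a bare ``clearly''; your block-factorization argument $\ga'=\ga\cdot\sigma$ (together with the observation that $\ga'_{ij}=\ga'_{ji}=0$ for $i\in T$, $j\notin T$) and the explicit coefficient $n-m-\text{ord}(\Omega)-\|\rho\|$ are exactly the details the paper elides, so this is the same proof made explicit rather than a different route.
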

\begin{proof}
Clearly the elements $C_n^{\Omega,\rho} \text{ with }
\Omega\in\Ga(m,G\times\mathbb{Z}_+),\rho\in\mathcal{P}(G_*) \text{
and }\text{ord}(\Omega)+\|\rho\|=n-m$ are proportional to the
characteristic functions of the $G_{n-m}^{\prime}$-orbit in
$\ovG_{m,n}$ corresponding to the pairs $(\Omega,\rho)$ by
Proposition~\ref{prop:desc. of conjugate orbits}. Hence these
elements form a basis of $\mZ^*_{m,n}$. Note that
$C_n^{\Omega,\rho\cup\rho^1_1}$ is proportional to
$C_n^{\Omega,\rho}$, therefore the second type of elements also
forms a basis of $\mZ^*_{m,n}$.
\end{proof}
\subsection{The basis of $\overline{\mZ}_m$}
We now introduce analogs of the elements $\Delta_n^{\rho}$ in
$\overline{\mZ}_{m,n}$. For any pair $(\Omega,\rho)$ with
$\Omega\in\Ga(m,G\times\mathbb{Z}_+)$ and
$\rho\in\mathcal{P}(G_*)$ such that
$\text{ord}(\Omega)+\|\rho\|\leq n-m$, we set
\begin{equation}\label{defn:elementC}
\Delta_n^{\Omega,\rho}=\sum_{P,T}\sum_{\ga\in\Ga(\Omega,P)}
\overline{\va}_P\ga
C^{\rho}_{n,T}\overline{\va}_T\overline{\va}_P,
\end{equation}
where $P,T$ are disjoint subsets in $\{m+1,\ldots,n\}$ such that
$$|P|=\text{ord}(\Omega),\qquad |T|=\|\rho\|.$$
\begin{rem}\label{rem:special elements}
Note that $G$  can be identified with the subgroup $\{(g,1)|g\in
G\}\subset G\times\mathbb{Z}_+$, and hence
$\overline{G}_m\subseteq \Ga(m,G\times\mathbb{Z}_+)$. Then we
have:
\begin{enumerate}
\item If  $\Omega\in
\overline{G}_m\subseteq\Ga(m,G\times\mathbb{Z}_+)$, then
$\text{ord}(\Omega)=0$ and hence it can be checked that
$\Delta_n^{\Omega,\emptyset}=\Omega.$

\item Let $\mathbb{I}\in\Ga(m,G\times\mathbb{Z}_+)$ be the
$m\times m$ identity matrix. Then
$$
\Delta_n^{\mathbb{I},\rho}=\sum_{T\subseteq\{m+1,\ldots,n\},
|T|=\|\rho\|}C_{n,T}^{\rho}\overline{\va}_T.
$$
\end{enumerate}
\end{rem}
\begin{lem}\label{lem: image of DnOr}
For any pair $(\Omega,\rho)$ with
$\Omega\in\Ga(m,G\times\mathbb{Z}_+)$ and
$\rho\in\mathcal{P}(G_*)$ such that
$\text{ord}(\Omega)+\|\rho\|\leq n-m$, the element
$\Delta_n^{\Omega,\rho}$ lies in the algebra
$\overline{\mZ}_{m,n}$. Moreover,
$\theta_n(\Delta_n^{\Omega,\rho})=\Delta_{n-1}^{\Omega,\rho}$,
where we adopt the the convention that $\Delta_k^{\Omega,\rho}=0$
if $\text{ord}(\Omega)+\|\rho\|>k-m$.
\end{lem}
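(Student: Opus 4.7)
The plan is to parallel the proofs of Lemma~\ref{lem:Del in ovmZ0n} and Proposition~\ref{prop:Im ofDel} from the $m=0$ case, carrying along the extra bookkeeping for $\Omega$ and the sandwiching $\overline{\va}_P\cdots\overline{\va}_P$.

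For the first assertion, since $\ovG_{n-m}^{\prime}$ is generated by the subgroup $G_{n-m}^{\prime}$ together with the commuting idempotents $\va_{m+1},\ldots,\va_n$, it suffices to verify that $\Delta_n^{\Omega,\rho}$ commutes with each of these generators. For the $G_{n-m}^{\prime}$-invariance I will invoke the orbit parametrization of Proposition~\ref{prop:desc. of conjugate orbits}: for $\be\in G_{n-m}^{\prime}$, conjugation sends $\Ga(\Omega,P)$ bijectively to $\Ga(\Omega,\be(P))$, sends $C^{\rho}_{n,T}$ to $C^{\rho}_{n,\be(T)}$, and, via $\sigma\va_j=\va_{\sigma(j)}\sigma$ and $\va_j g=g\va_j$, sends $\overline{\va}_P$ to $\overline{\va}_{\be(P)}$; the outer double sum over disjoint $P,T\subseteq\{m+1,\ldots,n\}$ is therefore unchanged. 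For commutation with a single $\va_i$, $m+1\leq i\leq n$, I argue term by term for fixed $(P,T,\ga)$. If $i\in P$, then $\va_i\overline{\va}_P=\overline{\va}_P\va_i=0$ kills the term on both sides. If $i\notin P\cup T$, then $\ga_{ii}=1$ and each $(g,\sigma)\in C^{\rho}_{n,T}$ has $\sigma(i)=i$ and $g_i=1$, so $\va_i$ commutes with every factor. The delicate case is $i\in T$: here $\va_i$ slides past $\overline{\va}_P$ (disjointness) and past $\ga$ (since $\ga_{ii}=1$), after which $\va_i(g,\sigma)=(g,\sigma)\va_{\sigma^{-1}(i)}$ yields $\va_iC^{\rho}_{n,T}\overline{\va}_T=0$ because $\sigma^{-1}(i)\in T$; a symmetric computation handles right-multiplication by $\va_i$.

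For the second assertion I apply $\theta_n(x)=\va_n x\va_n$ to each summand of the defining expression for $\Delta_n^{\Omega,\rho}$. If $n\in P$, then $\va_n\overline{\va}_P=0$ annihilates the term; if $n\in T$, the identity $\va_nC^{\rho}_{n,T}\overline{\va}_T=0$ from the previous paragraph does the same. Only the summands with $P,T\subseteq\{m+1,\ldots,n-1\}$ survive, and on these $\va_n$ commutes with every factor, leaving $\overline{\va}_P\ga C^{\rho}_{n,T}\overline{\va}_T\overline{\va}_P\va_n$. For such $P$, taking the upper-left $(n-1)\times(n-1)$ block is a bijection between $\Ga(\Omega,P)\subseteq\ovG_{m,n}$ and its analogue inside $\ovG_{m,n-1}$, since removing the diagonal fixed point at $n$ merely decreases the exponent in $(\rho^1_1)^{n-m-|P|}$ by one; moreover, for $T\subseteq\{m+1,\ldots,n-1\}$ one has $C^{\rho}_{n,T}=\phi(C^{\rho}_{n-1,T})$. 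Under the identification of $\F\ovG_{n-1}$ with the subalgebra of $\F\ovG_n$ spanned by elements whose $n$th row and column vanish (in which the identity element is $\va_n$), the surviving sum matches $\Delta_{n-1}^{\Omega,\rho}$ term by term. Finally, if $\text{ord}(\Omega)+\|\rho\|=n-m$, then every admissible $(P,T)$ meets $\{n\}$, all terms vanish, and $\theta_n(\Delta_n^{\Omega,\rho})=0$, consistent with the convention $\Delta_{n-1}^{\Omega,\rho}=0$.

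The main obstacle is really the careful bookkeeping of matching elements across the identification of $\F\ovG_{n-1}$ inside $\F\ovG_n$; the single algebraic identity $\va_iC^{\rho}_{n,T}\overline{\va}_T=0$ for $i\in T$, which drives both parts, is a routine application of $\sigma\va_j=\va_{\sigma(j)}\sigma$ combined with the orthogonality $\va_j(1-\va_j)=0$.
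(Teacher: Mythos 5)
Your proposal is correct and follows essentially the same approach as the paper's proof. The paper likewise reduces commutation with $\ovG_{n-m}^\prime$ to commutation with $G_{n-m}^\prime$ and with each $\va_k$, $m+1\leq k\leq n$, handling the cases $k\in P$, $k\in T$, and $k\notin P\cup T$ (and then invokes the argument of Proposition~\ref{prop:Im ofDel} for the projection $\theta_n$); you have merely spelled out the $k\in T$ annihilation $\va_k C^{\rho}_{n,T}\overline{\va}_T=0$ and the identification of the surviving terms with $\Delta_{n-1}^{\Omega,\rho}$ in more explicit detail.
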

\begin{proof}
Note that $\ovG_{n-m}^{\prime}$ is generated by the subgroup
$G_{n-m}^{\prime}$ and $\va_{m+1},\ldots,\va_n$. It is easy to
check that $\Delta_n^{\Omega,\rho}$ commutes with
$G_{n-m}^{\prime}$ by Proposition~\ref{prop:desc. of conjugate
orbits}. Moreover for each $m+1\leq k\leq n$, if $k$ lies in $P$ or
$T$, then $\va_k\overline{\va}_P\ga
C^{\rho}_{n,T}\overline{\va}_T\overline{\va}_P=0=\overline{\va}_P\ga
C^{\rho}_{n,T}\overline{\va}_T\overline{\va}_P\va_k$; otherwise,
$\va_k$ commutes with $\ga$ and $C^{\rho}_{n,T}$. Therefore
$\va_k\Delta_n^{\Omega,\rho}=\Delta_n^{\Omega,\rho}\va_k$ for each
$m+1\leq k\leq n$. This implies
$\Delta_n^{\Omega,\rho}\in\overline{\mZ}_{m,n}$. By applying the
similar argument of the proof of Proposition~\ref{prop:Im ofDel},
we have
$$
\theta_n(\Delta_n^{\Omega,\rho})=\sum_{P,T}\sum_{\ga\in\Ga(\Omega,P)}\overline{\va}_P\ga
C^{\rho}_{n,T}\overline{\va}_T\overline{\va}_P,
$$
where $P,T$ are disjoint subsets in $\{m+1,\ldots,n-1\}$ such that
$|P|=\text{ord}(\Omega),|T|=\|\rho\|.$ Hence
$\theta_n(\Delta_n^{\Omega,\rho})=\Delta_{n-1}^{\Omega,\rho}$.
\end{proof}
Fix $m\in\mathbb{Z}_+$, then for any $n\geq m$ and
$\Omega\in\Ga(m, G\times\mathbb{Z}_+), \rho\in\mathcal{P}(G_*)$
with $\text{ord}(\Omega)+\|\rho\|\leq n-m$, we have
$$
\text{deg}(\Delta_n^{\Omega,\rho})\leq
\text{ord}~\Omega+\|\rho\|+m.
$$ Hence by Lemma~\ref{lem: image of DnOr} there exist elements
$\Delta^{\Omega,\rho}\in \overline{\mZ}_m$ as sequences
$$
\Delta^{\Omega,\rho}:=(\Delta_n^{\Omega,\rho}|n\geq m).
$$

\begin{lem}\label{lem:Im(n,G)}
Recall that $I(n,G)=\F\ovG_n(1-\va_n)$ is the left ideal of
$\F\ovG_n$ generated by $1-\va_n$. Then we have for $0< m\leq n$,
$$
I(n,G)\cap
\overline{\mZ}_{m,n}=(1-\va_{m+1})\cdots(1-\va_n)\mZ^*_{m,n}(1-\va_{m+1})\cdots(1-\va_n).
$$
\end{lem}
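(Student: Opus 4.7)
The plan is to imitate the proof of the $m = 0$ case (Lemma~\ref{lem:In(G)and mZ0(Gn)}), but in the two-sided symmetric form forced by the shape of the right-hand side for $m > 0$. Set $Q = \{m+1,\ldots,n\}$ and $\overline{\va}_Q = (1-\va_{m+1})\cdots(1-\va_n)$; since the $\va_k$ pairwise commute and are idempotent, so is $\overline{\va}_Q$. For the inclusion $\supseteq$, pick $y \in \mZ^*_{m,n}$ and set $x = \overline{\va}_Q y \overline{\va}_Q$. The factorization $\overline{\va}_Q = \overline{\va}_{Q\setminus\{n\}}(1-\va_n)$ places $x$ in $\F\ovG_n(1-\va_n) = I(n,G)$. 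For the membership $x \in \overline{\mZ}_{m,n}$ it suffices to check commutativity with the generators $G^{\prime}_{n-m}$ and $\va_k$ ($k \in Q$) of $\ovG^{\prime}_{n-m}$: the identity $\va_k \overline{\va}_Q = 0 = \overline{\va}_Q \va_k$ yields $\va_k x = 0 = x \va_k$, while for $\be \in G^{\prime}_{n-m}$ the fact that $\be$ permutes the indices in $Q$ gives $\be \overline{\va}_Q \be^{-1} = \overline{\va}_Q$, so $\be x \be^{-1} = \overline{\va}_Q (\be y \be^{-1}) \overline{\va}_Q = \overline{\va}_Q y \overline{\va}_Q = x$.

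For the reverse inclusion, fix $x \in I(n,G) \cap \overline{\mZ}_{m,n}$. Writing $x = z(1-\va_n)$ gives $x \va_n = 0$; since $x$ commutes with $\va_n \in \ovG^{\prime}_{n-m}$ we also have $\va_n x = 0$, and conjugation by $G^{\prime}_{n-m}$ (which preserves $x$) carries $\va_n$ to every $\va_k$ with $k \in Q$, yielding $\va_k x = x \va_k = 0$ for all such $k$. Hence $(1-\va_k) x = x = x(1-\va_k)$ for every $k \in Q$, and so $x = \overline{\va}_Q x \overline{\va}_Q$. The key combinatorial input is the \emph{killing lemma}: for $\ga \in \ovG_n \setminus \ovG_{m,n}$ one has $\overline{\va}_Q \ga \overline{\va}_Q = 0$. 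Indeed, a zero $k$-th row of $\ga$ forces $\va_k \ga = \ga$, hence $(1-\va_k)\ga = 0$, and commuting the factor $(1-\va_k)$ out of $\overline{\va}_Q$ on the left gives the vanishing; a symmetric argument treats a zero $k$-th column via $\ga(1-\va_k) = 0$. Now expand $x = \sum_{\ga \in \ovG_n} c_\ga \ga$ and define $y := \sum_{\ga \in \ovG_{m,n}} c_\ga \ga \in \F\ovG_{m,n}$. The killing lemma then gives $\overline{\va}_Q y \overline{\va}_Q = \overline{\va}_Q x \overline{\va}_Q = x$. Finally, any $\be \in G^{\prime}_{n-m}$ permutes indices only within $Q$, so it preserves the defining condition of $\ovG_{m,n}$; combined with the $G^{\prime}_{n-m}$-invariance of the coefficients $c_\ga$ inherited from $x$, this gives $\be y \be^{-1} = y$, so $y \in \mZ^*_{m,n}$.

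The main technical point is the killing lemma together with the observation that the truncation $y$ still sandwiches back to $x$; both ingredients rely only on idempotence of the $\va_k$ and on the elementary identities $\va_k \ga = \ga$ (resp.\ $\ga \va_k = \ga$) when the $k$-th row (resp.\ column) of $\ga \in \ovG_n$ is zero. Everything else is bookkeeping directly parallel to the proof of Lemma~\ref{lem:In(G)and mZ0(Gn)}.
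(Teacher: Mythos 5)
Your proof is correct and follows essentially the same route as the paper's: the $\supseteq$ inclusion is handled identically, and the $\subseteq$ direction uses the same key steps (deduce $\va_k x = x\va_k = 0$ for $k\in Q$, write $x=\overline{\va}_Q x\overline{\va}_Q$, split $x$ into its $\ovG_{m,n}$-component $y$ and complement, invoke the killing lemma to discard the complement). The only cosmetic difference is in the last step: where the paper expands $x=\overline{\va}_Q\be y\be^{-1}\overline{\va}_Q$ and compares the two unique decompositions to conclude $\be y\be^{-1}=y$, you observe directly that conjugation by $\be\in G^{\prime}_{n-m}$ is a bijection of $\ovG_n$ preserving $\ovG_{m,n}$, so the truncation of the $G^{\prime}_{n-m}$-invariant element $x$ to $\F\ovG_{m,n}$ is itself invariant; both arguments rest on exactly the same observation, so this is not a genuinely different approach.
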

\begin{proof}
Suppose $x\in I(n,G)\cap \overline{\mZ}_{m,n}$. Since $\va_n\in
\ovG_{n-m}^{\prime}$ and $(1-\va_n)\va_n=0$, we have
$\va_nx=x\va_n=0$. By (\ref{eqn:semigprel}) and the fact that $x$
is invariant under the
 conjugation by the elements of $G_{n-m}^{\prime}$, we have
 $\va_kx=x\va_k=0$ for $m+1\leq k\leq n$.
 Thus
 $x=(1-\va_{m+1})\cdots(1-\va_n)x=x(1-\va_{m+1})\cdots(1-\va_n)$.
 Note that we can write $x=y+y^{\prime}$, where $y$ and $y^{\prime}$
are spanned by elements of $\ovG_{m,n}$ and
$\ovG_n\setminus\ovG_{m,n}$, respectively. Moreover $y$ and
$y^{\prime}$ are uniquely determined by $x$. However,
$(1-\va_{m+1})\cdots(1-\va_n)y^{\prime}(1-\va_{m+1})\cdots(1-\va_n)=0$
due to the fact that for each $\ga\in\ovG_n\setminus\ovG_{m,n}$
there exists $k\in\{m+1,\ldots, n\}$ such that $\ga\va_k=\ga$ or
$\va_k\ga=\ga$. Hence, we have
$x=(1-\va_{m+1})\cdots(1-\va_n)y(1-\va_{m+1})\cdots(1-\va_n)$. Now
for any $\be\in G_{n-m}^{\prime}$, $x=\be
x\be^{-1}=(1-\va_{m+1})\cdots(1-\va_n)\be
y\be^{-1}(1-\va_{m+1})\cdots(1-\va_n)$ since
$\be(1-\va_{m+1})\cdots(1-\va_n)=(1-\va_{m+1})\cdots(1-\va_n)\be$.
Then we can write $x$ as
$$
x=\be y\be^{-1}+\sum_{I,J\subseteq\{m+1,\ldots,n\}, I\cup J\neq
\emptyset}(-1)^{|I|+|J|}\va_{I}\be y\be^{-1}\va_{J}.
$$ Clearly the first term belongs to $\mathbb{F}\ovG_{m,n}$ while
the second term is spanned by the elements from
$\ovG_n\setminus\ovG_{m,n}$. Hence $y=\be y\be^{-1}$ for all
$\be\in G_{n-m}^{\prime}$, which implies $y\in\mZ^*_{m,n}$.

Conversely, suppose
$x=(1-\va_{m+1})\cdots(1-\va_n)y(1-\va_{m+1})\cdots(1-\va_n)$ for
some $y\in \mZ^*_{m,n}$. Clearly, we have $x\in I(n,G)$. On the
other hand, since $y\in \mZ^*_{m,n}$, $x$ is invariant under the
conjugation by the elements of $G_{n-m}^{\prime}$. Observe that
$x$ is both left and right annihilated by
$\va_{m+1},\ldots,\va_n$. This implies that $x$ commutes with
$\ovG_{n-m}^{\prime}$, and hence $x\in \overline{\mZ}_{m,n}$.
\end{proof}
 Recall from~(\ref{defn:barJ-m}) that for $\ga\in\ovG_n$ and $1\leq m\leq n$,
 $\bar{J}_m(\ga)=\{i|~m+1\leq i\leq n, \ga_{ii}=1\}$.
\begin{lem}\label{lem:bijection2}
The mapping
$$
\be\mapsto \be\va_{\bar{J}_m(\be)}
$$
defines a bijection of $\ovG_{m,n}$ onto the set of all
$\ga\in\ovG_n$ satisfying the conditions:
\begin{align}
\{j|~m+1\leq j\leq n, \exists~ i\in\mathbb{N}_n \text{ with
}\ga_{ij}\neq 0\}&=\{k|~m+1\leq k\leq n, \exists~ i\in\mathbb{N}_n
\text{ with }\ga_{ki}\neq 0\}\label{cond:rowcolumnnonzero},\\
\text{deg}_m(\ga)&=n-m\label{cond:m-degree}.
\end{align}
\end{lem}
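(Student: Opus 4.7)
My plan is to exhibit an explicit two-sided inverse to the map $\be\mapsto \be\va_{\bar J_m(\be)}$. The main delicate point, which I would flag up front, is the first well-definedness step: one must use the semigroup axiom of $\ovG_n$ to ensure that right multiplication by $\va_{\bar J_m(\be)}$ does not accidentally kill off-diagonal entries of rows indexed outside $\bar J_m(\be)$.

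First I would unpack what the map does. Since each row and column of $\be\in\ovG_n$ contains at most one nonzero entry, whenever $\be_{ii}=1$ the $i$th row and $i$th column of $\be$ consist solely of this single entry. Right multiplication by $\va_{\bar J_m(\be)}$ zeros out precisely the columns indexed by $\bar J_m(\be)$; by the semigroup property a column $j\in\bar J_m(\be)$ has no nonzero entry off the diagonal, so the only effect of zeroing column $j$ is to erase $\be_{jj}=1$, which simultaneously kills row $j$. For indices $i\notin\bar J_m(\be)$ neither the row nor the column of $\be$ is disturbed. Consequently, $\ga:=\be\va_{\bar J_m(\be)}$ is obtained from $\be$ by replacing each diagonal $1$ whose index lies in $\{m+1,\ldots,n\}$ by a $0$. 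From this description, both target conditions follow: (\ref{cond:m-degree}) because every $\ga_{ii}$ with $m+1\leq i\leq n$ is either $0$ or equals $\be_{ii}\neq 1$, and (\ref{cond:rowcolumnnonzero}) because for such $i$ the $i$th row of $\ga$ vanishes iff $i\in\bar J_m(\be)$ iff the $i$th column of $\ga$ vanishes.

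Then I would construct the candidate inverse, mimicking the recipe of Lemma~\ref{lem:bijection1} but restricted to the range $\{m+1,\ldots,n\}$: given $\ga$ in the target set, set $\be_{ii}=1$ for each $i\in\{m+1,\ldots,n\}$ whose $i$th row of $\ga$ vanishes, and $\be_{ij}=\ga_{ij}$ otherwise. Condition (\ref{cond:rowcolumnnonzero}) guarantees that the positions where we insert a $1$ previously had both the row and the column of $\ga$ zero, so no at-most-one-nonzero-entry condition is violated and $\be\in\ovG_n$; by construction every row and column of $\be$ with index in $\{m+1,\ldots,n\}$ is nonzero, so $\be\in\ovG_{m,n}$. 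Condition (\ref{cond:m-degree}) then forces $\bar J_m(\be)$ to coincide exactly with the set of inserted $1$'s, so applying the forward map recovers $\ga$; the reverse composition is immediate from the first paragraph, completing the bijection.
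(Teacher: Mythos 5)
Your proof is correct and follows essentially the same approach as the paper's: observe that right multiplication by $\va_{\bar J_m(\be)}$ simply deletes the diagonal $1$'s of $\be$ at indices in $\{m+1,\ldots,n\}$, and recover $\be$ from $\ga$ by reinserting a $1$ on the diagonal at each index $i\in\{m+1,\ldots,n\}$ whose $i$th row of $\ga$ vanishes. Your write-up is somewhat more careful than the paper's (which simply asserts the forward map lands in the target set) in spelling out why zeroing a column $j\in\bar J_m(\be)$ cannot disturb any other row.
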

\begin{proof}
Clearly for each $\be\in\ovG_{m,n}$, $\be\va_{\bar{J}_m(\be)}$
satisfy~(\ref{cond:rowcolumnnonzero}) and (\ref{cond:m-degree}).
Conversely, suppose $\ga\in\ovG_n$
satisfy~(\ref{cond:rowcolumnnonzero}) and (\ref{cond:m-degree}).
Then for each $m+1\leq i\leq n$, the $i$th row and column of $\ga$
are zero or nonzero at the same time, and moreover all the
diagonal entries $\ga_{ii}$ are not equal to $1$. Define
$\be\in\ovG_{m,n}$ as follows:
\begin{eqnarray}
\be_{ij}=\left\{
 \begin{array}{ll}
 1, &\text{ if }m+1\leq i=j\leq n \text{ and the }i\text{th row of }\ga \text{ is
 zero},\\
\gamma_{ij},&\text{ otherwise }\notag.
 \end{array}
 \right.
\end{eqnarray}
Then it is easy to see that $\be$ is mapped to $\ga$.
\end{proof}
\begin{lem}\label{lem:injective theta 2}
For $0<m<n$, the restriction of the projection $\theta_n:
\overline{\mZ}_{m,n}\rightarrow \overline{\mZ}_{m,n-1}$ to the
subspace $\Ga_m^{n-m-1}(\overline{\mZ}_{m,n})$ is injective.
\end{lem}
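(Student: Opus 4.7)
The plan is to mirror the proof of Lemma~\ref{lem:injective theta}, replacing the degree filtration with the $m$-degree one. Suppose $x\in\overline{\mZ}_{m,n}$ satisfies $\theta_n(x)=0$; the goal is to show $x=0$ unless $\text{deg}_m(x)=n-m$. Since $\va_n\in \ovG_{n-m}^{\prime}$ and $x$ commutes with $\ovG_{n-m}^{\prime}$, the identification $\theta_n(x)=\va_n x\va_n$ from Lemma~\ref{lem: hom.theta} reduces to $\theta_n(x)=x\va_n$, so that $\theta_n(x)=0$ forces $x\va_n=0$. Combining this with the $G_{n-m}^{\prime}$-invariance of $x$ and using~(\ref{eqn:semigprel}) to conjugate $\va_n$ to $\va_k$ for any $m+1\leq k\leq n$, we obtain $x\va_k=0$ for all such $k$. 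Hence $x=x(1-\va_{m+1})\cdots(1-\va_n)\in I(n,G)\cap \overline{\mZ}_{m,n}$, and Lemma~\ref{lem:Im(n,G)} writes $x=(1-\va_{m+1})\cdots(1-\va_n)\,y\,(1-\va_{m+1})\cdots(1-\va_n)$ for some $y\in\mZ^*_{m,n}$.

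Expanding $y=\sum_{\be\in\ovG_{m,n}}d_\be\be$ and distributing both idempotent products gives
$$
x=\sum_{\be}d_\be\sum_{I,J\subseteq\{m+1,\ldots,n\}}(-1)^{|I|+|J|}\va_I\be\va_J.
$$
A direct diagonal computation yields $\bar{J}_m(\va_I\be\va_J)=\bar{J}_m(\be)\setminus(I\cup J)$, so that $\text{deg}_m(\va_I\be\va_J)=n-m$ precisely when $\bar{J}_m(\be)\subseteq I\cup J$, and is strictly smaller otherwise. Denoting the top $m$-degree contribution of each $\be$ by $\Phi_m(\be):=\sum_{I\cup J\supseteq\bar{J}_m(\be)}(-1)^{|I|+|J|}\va_I\be\va_J$, the hypothesis $\text{deg}_m(x)<n-m$ forces $\sum_{\be}d_\be\Phi_m(\be)=0$.

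The main obstacle is to deduce from this identity that every $d_\be=0$, which is the analog of the linear independence step at the end of the proof of Lemma~\ref{lem:injective theta}. The key is to stratify $\ovG_{m,n}$ by the rank of $\be\va_{\bar{J}_m(\be)}$: within each $\Phi_m(\be)$ the top-rank contribution is (up to a nonzero scalar obtained from inclusion--exclusion) the single matrix $\be\va_{\bar{J}_m(\be)}$, while larger choices of $I\cup J$ strictly reduce the number of surviving nonzero columns. By Lemma~\ref{lem:bijection2} the assignment $\be\mapsto\be\va_{\bar{J}_m(\be)}$ is injective on $\ovG_{m,n}$, so a rank-descending induction peels off the coefficients $d_\be$ one stratum at a time, forcing all $d_\be$ to vanish. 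This yields $x=0$ and establishes the injectivity of $\theta_n$ on $\Ga_m^{n-m-1}(\overline{\mZ}_{m,n})$.
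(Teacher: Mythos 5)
Your proof follows the same strategy as the paper's: reduce to $x\in I(n,G)\cap\overline{\mZ}_{m,n}$, expand via Lemma~\ref{lem:Im(n,G)}, isolate the top $m$-degree part $\Phi_m(\be)$ of each $(1-\va_{m+1})\cdots(1-\va_n)\be(1-\va_{m+1})\cdots(1-\va_n)$, and then stratify by $\text{rank}(\be\va_{\bar{J}_m(\be)})$ and invoke Lemma~\ref{lem:bijection2} to peel off coefficients. The only point you state without justification --- that the inclusion-exclusion scalar $\sum_{I\cup J=\bar{J}_m(\be)}(-1)^{|I|+|J|}$ is nonzero --- is exactly what the paper defers to~\cite[Lemma~5.13]{MO}; it is in any case easily seen to equal $(-1)^{|\bar{J}_m(\be)|}$, so the argument is complete.
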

\begin{proof}
Suppose $x\in \overline{\mZ}_{m,n}$ and $\theta_n(x)=0$. We shall
show that $\text{deg}_m(x)=n-m$ unless $x=0$. Since
$x\in\overline{\mZ}_{m,n}$, we have $x\va_n=\va_nx$. Moreover
$\theta_n(x)=0$ implies $\va_nx\va_n=0$, and hence
$x=x-x\va_n=x(1-\va_n)\in I(n,G)\cap \overline{\mZ}_{m,n}$. By
Lemma~\ref{lem:Im(n,G)}, we can write $x$  as a linear combination
of the elements of the type $
(1-\va_{m+1})\cdots(1-\va_n)\be(1-\va_{m+1})\cdots(1-\va_n)$ for
$\be\in\ovG_{m,n}$. Note that
\begin{align}
(1-&\va_{m+1})\cdots(1-\va_n)\be(1-\va_{m+1})\cdots(1-\va_n)\notag\\
&=\sum_{R\cup S\supseteq
\bar{J}_m(\be)}(-1)^{|R|+|S|}\va_R\be\va_S+ \sum_{R\cup
S\nsupseteq
\bar{J}_m(\be)}(-1)^{|R|+|S|}\va_R\be\va_S,\label{division1}
\end{align}
where the summation is over the subsets $R, S$ of
$\{m+1,\ldots,n\}$. Moreover any term in the first sum
of~(\ref{division1}) is of $m$-degree $n-m$ while those in the
second one are of $m$-degree less than $n-m$. Hence it suffices to
show that the elements
$$
\sum_{R\cup S\supseteq \bar{J}_m(\be)}(-1)^{|R|+|S|}\va_R\be\va_S,
\be\in\ovG_{m,n}
$$ are linearly independent. Note that if $R\cup S=
\bar{J}_m(\be)$, then $\va_R\be\va_S=\be\va_{\bar{J}_m(\be)}$ and
$\text{rank}(\be\va_{\bar{J}_m(\be)})=\text{rank}(\be)-|\bar{J}_m(\be)|$;
otherwise if $R\cup S \supset \bar{J}_m(\be)$, then
$\text{rank}(\va_R\be\va_S)<\text{rank}(\be)-|\bar{J}_m(\be)|$.
Hence by the similar argument to the proof of
Lemma~\ref{lem:injective theta}, it suffices to show that for any
fixed $k$ the elements
$$
\sum_{R\cup
S=\bar{J}_m(\be)}(-1)^{|R|+|S|}\va_R\be\va_S=\sum_{R\cup
S=\bar{J}_m(\be)}(-1)^{|R|+|S|}\be\va_{\bar{J}_m(\be)},
 $$
where $ \be\in\overline{G}_{m,n}$ and
$\text{rank}(\be)-|\bar{J}_m(\be)|=k$, are linearly independent.
Lemma~\ref{lem:bijection2} implies that the elements
$\be\va_{\bar{J}_m(\be)}$ are pairwise distinct. Hence it remains
to prove that all the coefficients $\sum_{R\cup
S=\bar{J}_m(\be)}(-1)^{|R|+|S|}$ are nonvanishing, which we refer
to the proof of \cite[Lemma 5.13]{ MO}.
\end{proof}
\begin{prop}\label{prop:basis for ovmZmn}
The elements $\Delta_n^{\Omega,\rho}$ with $\Omega\in\Ga(m,
G\times\mathbb{Z}_+), \rho\in\mathcal{P}(G_*)$ and
$\text{ord}(\Omega)+\|\rho\|\leq n-m$ form a basis of
$\overline{\mZ}_{m,n}$. Moreover, for any $k$ with $0\leq k\leq
n-m$ the elements $\Delta_n^{\Omega,\rho}$ satisfying
$\text{ord}(\Omega)+\|\rho\|\leq k$ form a basis of
$\Ga_m^k(\overline{\mZ}_{m,n})$.
\end{prop}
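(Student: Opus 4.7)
The plan is to mirror the proof of Proposition~\ref{prop:basis for ovmZ0} and induct on $n\geq m$. For the base case $n=m$ we have $\overline{\mZ}_{m,m}=\F\ovG_m$, and the only admissible pairs satisfy $\text{ord}(\Omega)+\|\rho\|=0$, namely $\Omega\in\ovG_m$ (viewed inside $\Ga(m,G\times\mathbb{Z}_+)$) and $\rho=\emptyset$. By Remark~\ref{rem:special elements}(1), $\Delta_m^{\Omega,\emptyset}=\Omega$, and these form a basis of $\F\ovG_m$.

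For the inductive step at a low filtration level $0\leq k\leq n-m-1$, I would apply $\theta_n$: by Lemma~\ref{lem: image of DnOr} it sends each $\Delta_n^{\Omega,\rho}$ to $\Delta_{n-1}^{\Omega,\rho}$, and by the inductive hypothesis these latter elements form a basis of $\Ga_m^k(\overline{\mZ}_{m,n-1})$. Lemma~\ref{lem:injective theta 2} asserts that $\theta_n$ is injective on $\Ga_m^{n-m-1}(\overline{\mZ}_{m,n})$, hence on $\Ga_m^k(\overline{\mZ}_{m,n})$. Linear independence of $\{\Delta_n^{\Omega,\rho}:\text{ord}(\Omega)+\|\rho\|\leq k\}$ is then immediate; for spanning, given $x\in\Ga_m^k(\overline{\mZ}_{m,n})$, write $\theta_n(x)=\sum c_{\Omega,\rho}\Delta_{n-1}^{\Omega,\rho}$ via the inductive basis and observe that $x-\sum c_{\Omega,\rho}\Delta_n^{\Omega,\rho}$ lies in the kernel of the injective restriction of $\theta_n$, hence vanishes.

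For the top filtration level $k=n-m$, I would establish the direct-sum decomposition
$$\overline{\mZ}_{m,n}=\Ga_m^{n-m-1}(\overline{\mZ}_{m,n})\oplus\bigl(I(n,G)\cap\overline{\mZ}_{m,n}\bigr).$$
Since $\va_n\in\ovG_{n-m}^{\prime}$, for $x\in\overline{\mZ}_{m,n}$ one has $\theta_n(x)=\va_n x\va_n=x\va_n$, so $\ker(\theta_n\downarrow_{\overline{\mZ}_{m,n}})=I(n,G)\cap\overline{\mZ}_{m,n}$. The case $k=n-m-1$ just proven shows $\theta_n$ maps $\Ga_m^{n-m-1}(\overline{\mZ}_{m,n})$ onto all of $\overline{\mZ}_{m,n-1}$, and Lemma~\ref{lem:injective theta 2} makes this restriction a bijection; these combine to give the decomposition above. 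The elements $\Delta_n^{\Omega,\rho}$ with $\text{ord}(\Omega)+\|\rho\|=n-m$ lie in $I(n,G)\cap\overline{\mZ}_{m,n}$ because in that case $P$ and $T$ partition $\{m+1,\dots,n\}$, so $\overline{\va}_T\overline{\va}_P=\prod_{i=m+1}^n(1-\va_i)$ occurs as a right factor.

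The main obstacle, and the point where the argument departs from the $m=0$ case, is the linear independence of these top-level $\Delta_n^{\Omega,\rho}$. The key observation I plan to use is that $\Delta_n^{\Omega,\rho}-C_n^{\Omega,\rho}$ lies in $\F(\ovG_n\setminus\ovG_{m,n})$ while $C_n^{\Omega,\rho}\in\F\ovG_{m,n}$. Expanding each $\overline{\va}_P$ and $\overline{\va}_T$ as $1$ plus terms containing at least one $\va_i$ with $m+1\leq i\leq n$, the $1\cdot 1$ contribution to $\Delta_n^{\Omega,\rho}$ equals $\sum_{P,T}\sum_{\ga}\ga C^{\rho}_{n,T}=C_n^{\Omega,\rho}$, while every other monomial contains a $\va_i$ which annihilates row or column $i$ of the flanking matrix, placing the result in $\ovG_n\setminus\ovG_{m,n}$. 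A vanishing linear combination $\sum d_{\Omega,\rho}\Delta_n^{\Omega,\rho}=0$ then splits into its $\F\ovG_{m,n}$- and $\F(\ovG_n\setminus\ovG_{m,n})$-components, forcing $\sum d_{\Omega,\rho}C_n^{\Omega,\rho}=0$; Lemma~\ref{basis for Zm(n,G)} delivers $d_{\Omega,\rho}=0$. Finally, Lemma~\ref{lem:Im(n,G)} together with Lemma~\ref{basis for Zm(n,G)} yields $\dim(I(n,G)\cap\overline{\mZ}_{m,n})\leq$ the cardinality of this set of pairs, and the established linear independence upgrades this to equality, proving that the $\Delta_n^{\Omega,\rho}$ span as well.
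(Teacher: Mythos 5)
Your proposal is correct and follows essentially the same inductive strategy as the paper's proof: base case $n=m$ via Remark~\ref{rem:special elements}(1), lower filtration levels via injectivity of $\theta_n$ (Lemma~\ref{lem:injective theta 2}) together with the inductive hypothesis, the direct-sum decomposition $\overline{\mZ}_{m,n}=\Ga_m^{n-m-1}(\overline{\mZ}_{m,n})\oplus(I(n,G)\cap\overline{\mZ}_{m,n})$, and the top level handled by Lemmas~\ref{lem:Im(n,G)} and \ref{basis for Zm(n,G)} combined with the observation that $\Delta_n^{\Omega,\rho}-C_n^{\Omega,\rho}$ lies in $\F(\ovG_n\setminus\ovG_{m,n})$. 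Where the paper merely writes ``by applying the argument of proof of Proposition~\ref{prop:basis for ovmZ0}'', you have usefully spelled out the mechanism — that expanding the $\overline{\va}$ factors leaves $C_n^{\Omega,\rho}$ as the $\F\ovG_{m,n}$-component since every other monomial carries a $\va_i$ ($m+1\leq i\leq n$) that kills row or column $i$ — but the underlying argument is the one the paper intends.
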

\begin{proof}
It suffices to prove the second claim and for each fixed $m$, we
shall use the induction on $n$. Note that in the case $n=m$,
$\text{ord}(\Omega)+\|\rho\|\leq n-m$ implies $\Omega\in
\overline{G}_m$ and $\rho=\emptyset$, and hence
$\Delta_n^{\Omega,\rho}=\Omega$ by Remark~\ref{rem:special
elements}. So the proposition holds in the case $n=m$. 
Using Lemma~\ref{lem:injective theta 2} and the argument of
proof of Proposition~\ref{prop:basis for ovmZ0}, we obtain that
for each $0\leq k\leq n-m-1$, the elements
$\Delta_n^{\Omega,\rho}$ with $\text{ord}(\Omega)+\|\rho\|\leq k$
form a basis of $\Ga_m^k(\overline{\mZ}_{m,n})$. Furthermore
following the argument of proof of Proposition~\ref{prop:basis for
ovmZ0}, we can show that
$$
\overline{\mZ}_{m,n}=\Ga_m^{n-m-1}(\overline{\mZ}_{m,n})\oplus
(I(n,G)\cap \overline{\mZ}_{m,n}).
$$
Then it remains to show that the elements $\Delta_n^{\Omega,\rho}$
with $\text{ord}(\Omega)+\|\rho\|= n-m$ form a basis of
$I(n,G)\cap \overline{\mZ}_{m,n}$. By Lemma~\ref{lem:Im(n,G)}, it
is enough to show that these elements are linearly independent.
Since $\text{ord}~\Omega+\|\rho\|= n-m$, we have
$$
\Delta_n^{\Omega,\rho}=(1-\va_{m+1})\cdots(1-\va_n)C_n^{\Omega,\rho}(1-\va_{m+1})\cdots(1-\va_n).
$$
By applying the argument of proof of Proposition~\ref{prop:basis
for ovmZ0}, we can show  that $\Delta_n^{\Omega,\rho}$ are also
linearly independent.
\end{proof}
Recall that for each  $\Omega\in\Ga(m, G\times\mathbb{Z}_+),
\rho\in\mathcal{P}(G_*)$, there exists $\Delta^{\Omega,\rho}\in
\overline{\mZ}_m$ as
$$
\Delta^{\Omega,\rho}=(\Delta_n^{\Omega,\rho}|n\geq m).
$$
By Proposition~\ref{prop:basis for ovmZmn} we have reached the
following.
\begin{thm}\label{thm:basis of ovmZm}
The elements $\Delta^{\Omega,\rho}$  with $\Omega\in\Ga(m,
G\times\mathbb{Z}_+), \rho\in\mathcal{P}(G_*)$ form a basis of the
algebra $\overline{\mZ}_m$. Moreover, for any $k\geq 0$, the
elements $\Delta^{\Omega,\rho}$ with
$\text{ord}(\Omega)+\|\rho\|\leq k$ form a basis of
$\Ga_m^k(\overline{\mZ}_m)$.
\end{thm}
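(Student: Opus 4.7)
The plan is to derive the theorem from Proposition~\ref{prop:basis for ovmZmn} by a standard projective-limit argument, using the compatibility of $\theta_n$ with the basis elements established in Lemma~\ref{lem: image of DnOr}. A useful preliminary observation is that for each fixed $k$ the index set $\{(\Omega,\rho) : \text{ord}(\Omega)+\|\rho\|\leq k\}$ is finite: the partition-valued functions $\rho$ with $\|\rho\|\leq k$ form a finite set, and $\text{ord}(\Omega)\leq k$ uniformly bounds the $z$-exponents appearing in the at most $m$ nonzero entries of $\Omega\in\Ga(m,G\times\mathbb{Z}_+)$, leaving only finitely many possibilities.

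For linear independence, I would start with a finite relation $\sum c_{(\Omega,\rho)}\Delta^{\Omega,\rho}=0$ in $\overline{\mZ}_m$ and choose $N$ large enough that $\text{ord}(\Omega)+\|\rho\|\leq N-m$ for every index appearing with nonzero coefficient. Applying the projection $\theta^N\colon\overline{\mZ}_m\to\overline{\mZ}_{m,N}$ and invoking Lemma~\ref{lem: image of DnOr} turns this into a relation $\sum c_{(\Omega,\rho)}\Delta_N^{\Omega,\rho}=0$ inside $\overline{\mZ}_{m,N}$, where Proposition~\ref{prop:basis for ovmZmn} immediately forces all $c_{(\Omega,\rho)}=0$.

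For spanning, I would take $\alpha=(\alpha_n)_{n\geq m}\in\Ga_m^k(\overline{\mZ}_m)$ and, for each $n\geq m+k$, expand
$$\alpha_n=\sum_{\text{ord}(\Omega)+\|\rho\|\leq k} c_n(\Omega,\rho)\,\Delta_n^{\Omega,\rho}$$
uniquely by Proposition~\ref{prop:basis for ovmZmn}. The compatibility $\theta_n(\alpha_n)=\alpha_{n-1}$, combined with $\theta_n(\Delta_n^{\Omega,\rho})=\Delta_{n-1}^{\Omega,\rho}$ and uniqueness of the expansion at level $n-1\geq m+k$, forces $c_n(\Omega,\rho)=c_{n-1}(\Omega,\rho)$. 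Hence the coefficients stabilize to scalars $c(\Omega,\rho)$, and $\alpha$ agrees with the finite sum $\sum c(\Omega,\rho)\Delta^{\Omega,\rho}$ in all components with $n\geq m+k+1$; the projective-limit compatibility then propagates this equality to every $n$. The unfiltered claim follows from $\overline{\mZ}_m=\bigcup_{k\geq 0}\Ga_m^k(\overline{\mZ}_m)$.

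The principal subtlety I expect is the bookkeeping at small $n$: when $n-m<\text{ord}(\Omega)+\|\rho\|$ the basis element $\Delta_n^{\Omega,\rho}$ degenerates to zero, so the stabilization of coefficients is only meaningful from some threshold onward, and one must check that no information is lost at the initial segment of the sequence. This, however, is controlled automatically by the projective system: the components $\alpha_n$ for small $n$ are determined by $\alpha_{n+1}$ via $\theta_{n+1}$, so agreement above the threshold forces agreement everywhere. Apart from this, the argument is a formal consequence of the finite-level basis result and the behavior of $\theta_n$ on the $\Delta_n^{\Omega,\rho}$.
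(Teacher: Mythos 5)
Your proof is correct and is precisely the projective-limit argument the paper leaves implicit (the paper's ``proof'' is the single sentence ``By Proposition~\ref{prop:basis for ovmZmn} we have reached the following''). You correctly identify the two ingredients needed to make this rigorous---the finite-level basis of Proposition~\ref{prop:basis for ovmZmn} and the compatibility $\theta_n(\Delta_n^{\Omega,\rho})=\Delta_{n-1}^{\Omega,\rho}$ from Lemma~\ref{lem: image of DnOr}---and you handle the small-$n$ degeneration and coefficient stabilization cleanly.
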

\subsection{The algebraic structure of $\overline{\mZ}_m$}
Recall that for each pair $1\leq k<l\leq n$, $ t_{kl}=\sum_{g\in
G}g^{(k)}(g^{-1})^{(l)}\in\mathbb{F}G^n\subseteq \F\ovG_n$. For
each $k=1,\ldots,m$, we set
\begin{equation}\label{eqn:JM}
u_{k|n}=\sum_{i=k+1}^n t_{ki}(ki)(1-\va_k)(1-\va_i)=\sum_{i=k+1}^n
(1-\va_i)t_{ki}(ki)(1-\va_i).
\end{equation}
Note that the images of $u_{k|n}(1\leq k\leq m)$ under
homomorphism $\Phi$ defined in~(\ref{map:retraction}) are
$\xi_k(1\leq k\leq m)$ the Jucys-Murphy elements for $G_n$; see
(\ref{eqn:jucys}). It is easy to check that $u_{k|n}$ commutes
with $\ovG_{n-m}^{\prime}$ for any $n\geq m$, and hence
$u_{k|n}\in \overline{\mZ}_{m,n}$. Furthermore since
$\theta_n(u_{k|n})=u_{k|n-1}$ and $\text{deg}(u_{k|n})=2$,  we can
define the elements $u_k\in \overline{\mZ}_m$ for $1\leq k\leq m$
as the sequence
$$
u_k=(u_{k|n}|n\geq m).
$$

Recall that the algebra  $\F\ovG_m$ is generated by $g\in G^m,
s_1,\ldots,s_{m-1}$ and $\va_1,\ldots,\va_m$. Lemma~\ref{lem:ovmZm
to FovGm} says that the algebra  $\F\ovG_m$ can be naturally
embedded in $\overline{\mZ}_m$.  Then we have the following lemma.
\begin{lem}\label{lem:relations of Htilde}
The following relations hold in the algebra $\overline{\mZ}_m$:
\begin{align}
gu_k&=u_k,\quad \forall g\in G^m, 1\leq k\leq m\label{eqn:gandu}\\
s_ku_k&=u_{k+1}s_k+t_{k,k+1}(1-\va_k)(1-\va_{k+1}),
s_ku_l=u_ls_k, l\neq k,k+1, 1\leq k\leq m-1\label{eqn:skand u}\\
u_ku_l&=u_lu_k,\quad \va_ku_k=u_k\va_k=0,\quad,
\va_lu_k=u_k\va_l,\quad 1\leq l\neq k\leq m,\label{eqn:uandu}
\end{align}
\end{lem}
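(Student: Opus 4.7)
The strategy is to reduce everything to finite level: by Definition~\ref{defn:Am(G)} the algebra $\overline{\mZ}_m$ has componentwise operations, and each $u_k = (u_{k|n})_{n \geq m}$, so it suffices to verify each identity inside $\overline{\mZ}_{m,n} \subseteq \F\ovG_n$ for every $n \geq m$. The proof then proceeds by substituting the explicit formula (\ref{eqn:JM}) and simplifying via the semigroup presentation (\ref{reln:symm.gp})--(\ref{reln:gp.va}). The recurring technical inputs are the intertwining identities, which are immediate by re-indexing the group sum in $t_{ki} = \sum_{h \in G} h^{(k)}(h^{-1})^{(i)}$: for any $h \in G$,
\begin{equation*}
h^{(k)} t_{ki} = t_{ki} h^{(i)}, \quad h^{(i)} t_{ki} = t_{ki} h^{(k)}, \quad \va_i t_{ki} = t_{ki} \va_i,
\end{equation*}
together with $(k,i) h^{(k)} = h^{(i)}(k,i)$, $s_k \va_k = \va_{k+1} s_k$, and $s_k t_{ki} = t_{k+1,i} s_k$ for $i \geq k+2$ (while $s_k$ commutes with $t_{k,k+1}$).

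The $\va$-identities in (\ref{eqn:uandu}) are essentially immediate: $\va_k u_k = u_k \va_k = 0$ because every summand of $u_{k|n}$ carries the factor $(1-\va_k)$; and for $\va_l u_k = u_k \va_l$ with $l \neq k$ the $i = l$ summand vanishes on both sides while for $i \neq l$ the element $\va_l$ commutes with every factor of $t_{ki}(k,i)(1-\va_k)(1-\va_i)$. The commutation of $u_k$ with $g \in G^m$ in (\ref{eqn:gandu}) and with $s_j$ for $l \neq k, k+1$ in (\ref{eqn:skand u}) reduces to moving $g$ or $s_j$ past each summand using the intertwining identities; the only nontrivial case is when an index touched by $g$ or $s_j$ coincides with the summation variable $i$, and is handled by $h^{(i)} t_{ki}(k,i) = t_{ki}(k,i) h^{(i)}$ and its analogs.

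The Hecke-type relation $s_k u_k = u_{k+1} s_k + t_{k,k+1}(1-\va_k)(1-\va_{k+1})$ is obtained by isolating the $i = k+1$ summand: using $s_k t_{k,k+1} = t_{k,k+1} s_k$ and $s_k^2 = 1$, that summand contributes exactly the inhomogeneous term $t_{k,k+1}(1-\va_k)(1-\va_{k+1})$, while for $i \geq k+2$ one pushes $s_k$ through the summand $t_{ki}(k,i)(1-\va_k)(1-\va_i)$ via $s_k t_{ki}(k,i)(1-\va_k) = t_{k+1,i}(k+1,i)(1-\va_{k+1}) s_k$ to produce the matching summand of $u_{k+1|n} s_k$.

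The main obstacle is the commutativity $u_k u_l = u_l u_k$ for $k < l$. My plan is to expand both products as double sums indexed by pairs $(i, j)$ with $i > k$ and $j > l$, and match summands according to the intersection pattern of $\{k, i\}$ with $\{l, j\}$. Disjoint pairs commute outright. The overlapping cases --- $i = l$, $i = j$, and their mirror images in the opposite product --- produce compositions of transpositions with shared support (typically three-cycles); after applying the intertwining identities above to rearrange the $t$-factors and collapsing repeated factors via $(1-\va_a)^2 = 1 - \va_a$, each such summand reaches a form that is symmetric in the roles of $k$ and $l$. The delicate book-keeping in these overlap cases --- tracking the interplay of the $t$-factors, transpositions, and $\va$-idempotents --- is the heart of the proof, and parallels the classical commutation of Jucys--Murphy elements for symmetric groups.
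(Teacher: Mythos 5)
Your framework (reduce to the finite level $\overline{\mZ}_{m,n}$ componentwise, then manipulate using the presentation of $\F\ovG_n$ and the definition \eqref{eqn:JM} of $u_{k|n}$) is the same as the paper's, and your treatment of \eqref{eqn:gandu}, \eqref{eqn:skand u}, and the $\va$-relations in \eqref{eqn:uandu} is sound and matches the paper's remark that these "can be easily checked by applying Proposition~\ref{prop:pre. of ovGn}."

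The problem is the commutativity $u_k u_l = u_l u_k$, which you correctly identify as the hard part but then only sketch: you say "my plan is to expand both products\ldots" and that the overlap terms "reach a form that is symmetric in the roles of $k$ and $l$," while conceding that "the delicate book-keeping\ldots is the heart of the proof." That is a gap, not a proof --- the crux is asserted rather than established. The double-sum approach is not wrong in principle (it is the classical way to verify JM-commutativity directly), but the wreath-product version requires tracking not just the triple-cycle permutations but also the $t$-factors, which do \emph{not} stay put: in the overlap case $i=l$ you must push $t_{kl}$ past $t_{lj}(l,j)$, picking up $t_{kl}t_{lj}(l,j) = t_{lj}(l,j)t_{kj}$, and similar non-obvious identities occur in the $i=j$ case and in the reverse product. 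Absorbing the $(1-\va)$-idempotents correctly in these shuffles also needs an explicit argument (e.g.\ verifying that no $\va_a$ gets stranded on the wrong side of a permutation that moves $a$). None of this is carried out.

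The paper avoids all of this with a one-line structural argument that you should know: observe that
\[
u_{k|n}=\Delta_{n-k+1}^{\prime\,((2),\emptyset,\ldots,\emptyset)}-\Delta_{n-k}^{\prime\,((2),\emptyset,\ldots,\emptyset)},
\]
where $\Delta_l^{\prime\rho}$ denotes the image of $\Delta_l^\rho$ under the embedding $\F\ovG_l\hookrightarrow\F\ovG_n$ with image $\F\ovG'_l$. Each $\Delta_l^{\prime\rho}$ is central in $\F\ovG'_l$, and for $l_1\leq l_2$ one has $\F\ovG'_{l_1}\subseteq\F\ovG'_{l_2}$; hence all the elements $\Delta_l^{\prime\rho}$ commute pairwise, and so do the $u_{k|n}$'s. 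This replaces the entire double-sum bookkeeping with the standard "commutes with a subalgebra containing the other element" principle, and is the step your proposal should be replaced by or supplemented with.
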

\begin{proof}
Since $u_k=(u_{k|n}|n\geq m)$ for each $1\leq k\leq m$, it
suffices to check these relations with $u_k$ replaced by $u_{k|n}$
for each $1\leq k\leq m$ in the algebra $\overline{\mZ}_{m,n}$ for
any $n\geq m$. Note that there is a canonical embedding of
$\F\ovG_l$ in $\F\ovG_n$ with image $\mathbb{F}\ovG_l^{\prime}$
for each $1\leq l\leq n$ and let us denote by
$\Delta_l^{\prime\rho}$ the corresponding image of the element
$\Delta_l^{\rho}$ for each $\rho\in\mathcal{P}(G_*)$. Then we have
$u_{k|n}=\Delta_{n-k+1}^{\prime((2),\emptyset,\ldots,\emptyset)}-
\Delta_{n-k}^{\prime((2),\emptyset,\ldots,\emptyset)}$ for each
$1\leq k\leq m$, and hence $u_{1|n},\ldots,u_{m|n}$ pairwise
commute. The remaining relations can be easily checked by applying
Proposition~\ref{prop:pre. of ovGn}.
\end{proof}

Let us denote by $\overline{\mathcal{H}}_m(G)$ the subalgebra of
$\overline{\mZ}_m$ generated by $\F\ovG_m$ and the elements
$u_1,\ldots,u_m$. The following theorem describes the structure of
the algebra $\overline{\mZ}_m$.
\begin{thm}\label{thm:tensor1} We have an algebra isomorphism
$$
\overline{\mZ}_m\cong
\overline{\mZ}_0\otimes\overline{\mathcal{H}}_m(G).
$$
Moreover, $\overline{\mathcal{H}}_m(G)$ is isomorphic to the
algebra with generators $g\in G^m, s_1,\ldots,s_{m-1}$ and
$\va_1,\ldots,\va_m$ subject to the
relations~(\ref{reln:braid})-(\ref{reln:gp.va}) and
(\ref{eqn:gandu})-(\ref{eqn:uandu}).
\end{thm}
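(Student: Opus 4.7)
The strategy is to exhibit the multiplication map $\mu:\overline{\mZ}_0\otimes\overline{\mathcal{H}}_m(G)\to\overline{\mZ}_m$, $a\otimes b\mapsto ab$, as an algebra isomorphism, and then to recover the presentation of $\overline{\mathcal{H}}_m(G)$ by a PBW-style basis argument. That $\mu$ is a well-defined algebra homomorphism is immediate from the earlier result that $\overline{\mZ}_0$ is the center of $\overline{\mZ}$: since $\overline{\mathcal{H}}_m(G)\subseteq\overline{\mZ}_m\subseteq\overline{\mZ}$, the two factors commute, so multiplication respects both products.

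The core of the proof is a PBW-type spanning argument for $\overline{\mathcal{H}}_m(G)$. Using the relations in Lemma~\ref{lem:relations of Htilde} together with the presentation of $\F\ovG_m$ from Proposition~\ref{prop:pre. of ovGn}, I would show that every element of $\overline{\mathcal{H}}_m(G)$ can be rewritten as a linear combination of \emph{ordered} monomials $\tau\,u_1^{a_1}\cdots u_m^{a_m}$ with $\tau\in\F\ovG_m$ and $a_i\in\mathbb{Z}_+$: the commutativity $u_ku_l=u_lu_k$ orders the $u$'s, the identities $\va_lu_k=u_k\va_l$ ($k\neq l$) and $s_ku_l=u_ls_k$ ($l\neq k,k+1$) commute them past the $\F\ovG_m$-generators that do not interfere, and the identity $s_ku_k=u_{k+1}s_k+t_{k,k+1}(1-\va_k)(1-\va_{k+1})$ pushes the remaining $s_k$'s across at the price of strictly smaller total $u$-degree. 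Pairing these monomials with the basis $\{\Delta^\rho\}$ of $\overline{\mZ}_0$ from Theorem~\ref{thm:basis for vmZ0}, I would then prove by induction on the filtration degree $\mathrm{ord}(\Omega)+\|\rho\|$ that every basis element $\Delta^{\Omega,\rho}$ of Theorem~\ref{thm:basis of ovmZm} is congruent, modulo strictly smaller filtration degree, to a product $\Delta^\rho\cdot\tau\,u_1^{a_1}\cdots u_m^{a_m}$ for data $(\tau,a_1,\dots,a_m)$ determined by $\Omega$. This leading-term identification yields surjectivity of $\mu$; injectivity is then forced, because the spanning monomials on the right map onto a basis and so cannot collapse.

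Once $\mu$ is shown to be an isomorphism, the presentation of $\overline{\mathcal{H}}_m(G)$ follows with little extra work. Let $\widetilde{\mathcal{H}}$ denote the abstract algebra defined by the stated generators and relations; Lemma~\ref{lem:relations of Htilde} supplies a surjection $\widetilde{\mathcal{H}}\twoheadrightarrow\overline{\mathcal{H}}_m(G)$, and the exact same relation-based reduction shows that $\widetilde{\mathcal{H}}$ itself is spanned by the ordered monomials $\tau\,u_1^{a_1}\cdots u_m^{a_m}$. Since their images are linearly independent in $\overline{\mathcal{H}}_m(G)$ by the previous step, the surjection must be an isomorphism. The main obstacle, where I expect the real combinatorial work to lie, is the inductive identification of leading terms in the $\Ga_m^\bullet$-filtration: one must verify explicitly how a product $\tau\,u_1^{a_1}\cdots u_m^{a_m}\cdot\Delta^\rho$ assembles, through the cycle-like summands $t_{ki}(ki)(1-\va_k)(1-\va_i)$ defining each $u_k$, into an element whose leading part equals $\Delta^{\Omega,\rho}$ for a prescribed $\Omega\in\Ga(m,G\times\mathbb{Z}_+)$, with the $z$-power entries of $\Omega$ recording the lengths of the paths built up by successive $u_k$-multiplications. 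This is essentially the new wreath-product content beyond the symmetric-group case of \cite{MO}.
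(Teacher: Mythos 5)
Your proposal is correct and follows essentially the same route as the paper's proof: both arguments hinge on the triangularity, in the $m$-degree filtration $\Ga_m^\bullet$, of the monomials $\Delta^\rho\cdot\ga\,u_1^{a_1}\cdots u_m^{a_m}$ against the basis $\{\Delta^{\Omega,\rho}\}$ from Theorem~\ref{thm:basis of ovmZm}, and then read off the tensor decomposition and the presentation of $\overline{\mathcal{H}}_m(G)$ from the resulting free $\overline{\mZ}_0$-module basis. The only organizational difference is one of framing: you package the conclusion as injectivity/surjectivity of the multiplication map $\mu$, whereas the paper constructs the basis directly and records the isomorphism at the end; and where you defer the ``inductive identification of leading terms'' as the expected hard step, the paper carries it out concretely by factoring $\Omega=\va_{i_1}\cdots\va_{i_s}\,\be\,\alpha_1^{k_1}\cdots\alpha_m^{k_m}$ and verifying $\Delta_n^{\alpha_k,\emptyset}=u_{k|n}+(\text{terms of $m$-degree }0)$, which is precisely the leading-term matching you anticipate. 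So you have correctly located the new combinatorial content and the overall strategy matches.

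Two small remarks. First, in the injectivity step you should make explicit that the assignment $(\ga,a_1,\ldots,a_m)\leftrightarrow\Omega$ coming from the leading-term comparison is a bijection onto $\Ga(m,G\times\mathbb{Z}_+)$; this is what lets ``spanning set maps onto a basis'' force linear independence of the domain monomials. Second, relation~(\ref{eqn:gandu}) as printed, $gu_k=u_k$, appears to be a typographical slip for $gu_k=u_kg$; this is what the definition~(\ref{eqn:JM}) of $u_{k|n}$ actually yields, and it is what is needed both for your commutation step moving $u$'s past elements of $G^m$ and for compatibility with the epimorphism $\overline{\Phi}$ onto $\mathcal{H}_m(G)$, whose relation is $gx_k=x_kg$.
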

\begin{proof}
Recall from~(\ref{defn:elementC}) the definition of
$\Delta_n^{\Omega,\rho}$ for any
$\Omega\in\Ga(m,G\times\mathbb{Z}_+)$ and
$\rho\in\mathcal{P}(G_*)$ with $\text{ord}(\Omega)+\|\rho\|\leq
n-m$. Note that the $m$-degree of $\Delta_n^{\Omega,\rho}$ is
$\text{ord}(\Omega)+\|\rho\|$. It is easy to see that
$$
\Delta_n^{\Omega,\emptyset}\Delta_n^{\mathbb{I},\rho}=\Delta_n^{\Omega,\rho}+(\cdots),
$$
where $(\cdots)$ is a linear combination of elements
$\Delta_n^{\Omega^{\prime},\rho^{\prime}}$ with
$\text{ord}(\Omega^{\prime})+\|\rho^{\prime}\|<\text{ord}(\Omega)+\|\rho\|$.
Furthermore, by Remark~\ref{rem:special elements}(2), we have:
\begin{equation}\label{difference 1}
\Delta_n^{\rho}-\Delta_n^{\mathbb{I},\rho} =\sum_{
|T|=\|\rho\|}C^{\rho}_{n,T}\overline{\va}_T,
\end{equation}
where the summation is over the subset $T$ of $\mathbb{N}_n$ not
contained in $\{m+1,\ldots,n\}$. Since each element appearing on
the right hand side of~(\ref{difference 1}) is of $m$-degree less
than $\|\rho\|$, we obtain that
$$
\Delta_n^{\Omega,\emptyset}\Delta_n^{\rho}=\Delta_n^{\Omega,\rho}+(\cdots),
$$
where $(\cdots)$ is a linear combination of elements
$\Delta_n^{\Omega^{\prime},\rho^{\prime}}$ with
$\text{ord}(\Omega^{\prime})+\|\rho^{\prime}\|<\text{ord}(\Omega)+\|\rho\|$.
Then Proposition~\ref{prop:basis for ovmZmn} implies that the
elements $\Delta_n^{\Omega,\emptyset}\Delta_n^{\rho}$ with
$\Omega\in\Ga(m,G\times\mathbb{Z}_+)$ and
$\rho\in\mathcal{P}(G_*)$ such that
$\text{ord}(\Omega)+\|\rho\|\leq n-m$ form a basis of
$\overline{\mZ}_{m,n}$. Hence
$\Delta^{\Omega,\emptyset}\Delta^{\rho}$ with
$\Omega\in\Ga(m,G\times\mathbb{Z}_+)$ and
$\rho\in\mathcal{P}(G_*)$ form a basis of the algebra
$\overline{\mZ}_m$. This means $\overline{\mZ}_m$ is a free
$\overline{\mZ}_0$-module with the basis
$\{\Delta^{\Omega,\emptyset}|~\Omega\in\Ga(m,G\times\mathbb{Z}_+)\}$
by Theorem~\ref{thm:basis for vmZ0}.

If $\Omega\in\Ga(m,G\times\mathbb{Z}_+)$ has zero rows
$i_1,\ldots,i_s,$ then we can properly replace some $0$'s in these
rows by $1$ to get an element $\Omega^{\prime}\in
S(m,G\times\mathbb{Z}_+)$ such that
$\Omega=\va_{i_1}\cdots\va_{i_s}\Omega^{\prime}$. Then
$\text{ord}(\Omega)=\text{ord}(\Omega^{\prime})$ and for each
subset $P\subseteq\{m+1,\ldots,n\}$ with $|P|=\text{ord}(\Omega)$
we have $\Ga(\Omega,
P)=\{\va_{i_1}\cdots\va_{i_s}\ga|~\ga\in\Ga(\Omega^{\prime},P)\}$.
Hence
$$
\Delta_n^{\Omega,\emptyset}=\va_{i_1}\cdots\va_{i_s}\Delta_n^{\Omega^{\prime},\emptyset}.
$$
On the other hand, each $\Omega^{\prime}\in
S(m,G\times\mathbb{Z}_+)$ can be written as:
$$
\Omega^{\prime}=\be\alpha_1^{k_1}\cdots\alpha_m^{k_m},
$$
where $\be\in G_m$ and $\alpha_i\in \Ga(m,G\times\mathbb{Z}_+)$ is
the diagonal matrix whose $(i,i)$th entry is $z$ and all other
diagonal entries are equal to $1$ and $k_i\in\mathbb{Z}_+$ for
each $1\leq i\leq m$. Then we have
$$
\Delta_n^{\Omega^{\prime},\emptyset}=\Delta_n^{\be,\emptyset}
(\Delta_n^{\alpha_1,\emptyset})^{k_1}\cdots(\Delta_n^{\alpha_m,\emptyset})^{k_m}+(\cdots),
$$
where $(\cdots)$ is a linear combination of elements
$\Delta_n^{\Omega^{''},\rho^{''}}$ with
$\text{ord}(\Omega^{''})+\|\rho^{''}\|<\text{ord}(\Omega^{'})$. By
Remark~\ref{rem:special elements}(1), for $\be\in G_m$ we have
$$
\Delta_n^{\be,\emptyset}=\be.
$$
For each $1\leq k\leq m$, $\text{ord}(\alpha_k)=1$, then it can be
checked that
\begin{equation}\label{eq:Deltak}
\Delta_n^{\alpha_k,\emptyset}=\sum_{l=m+1}^n(1-\va_l)t_{kl}(kl)(1-\va_l).
\end{equation}
Indeed, in the case of $\alpha_1$ and $P=\{l\}$ with $m+1\leq
l\leq n$, we have
$$\Ga(\alpha_1,P)=\{\ga\in\ovG_{m,n}|\ga_{1l}\ga_{l1}=1,\ga_{ij}=\delta_{ij},
i,j\neq 1,l\}\subseteq G_n,
$$ and it is easy to see
$$
\sum_{\ga\in\Ga(\alpha_1,P)}\overline{\va}_{P}\ga\overline{\va}_P=(1-\va_{l})t_{1l}(1l)(1-\va_{l}).
$$
Hence
$$
\Delta_n^{\alpha_1,\emptyset}=\sum_{l=m+1}^n(1-\va_{l})t_{1l}(1l)(1-\va_{l}).
$$
Then by~(\ref{eq:Deltak}) and definition of $u_{k|n}$
in~(\ref{eqn:JM}) we have for each $1\leq k\leq m$
$$
\Delta_n^{\alpha_k,\emptyset}= u_{k|n}+\text{elements of
}m\text{-degree zero}.
$$
By now we can conclude that $\be u_{1|n}^{k_1}\cdots
u_{m|n}^{k_m}$ coincides with $\Delta_n^{\be,\emptyset}
(\Delta_n^{\alpha_1,\emptyset})^{k_1}\cdots(\Delta_n^{\alpha_m,\emptyset})^{k_m}$
modulo lower $m$-degree terms and hence $\overline{\mZ}_m$ as
$\overline{\mZ}_0$-module has a basis
$$
\{\ga u_1^{k_1}\cdots u_m^{k_m}|~\ga\in \ovG_m,
k_l\in\mathbb{Z}_+, 1\leq l\leq m\}.
$$
Note that the subalgebra $\overline{\mathcal{H}}_m(G)$ is spanned
by $\ga u_1^{k_1}\cdots u_m^{k_m}$ with $\ga\in \ovG_m,
k_l\in\mathbb{Z}_+, 1\leq l\leq m$. Hence $\overline{\mZ}_m\cong
\overline{\mZ}_0\otimes \overline{\mathcal{H}}_m(G)$. Moreover the
subset
$$
\{\ga u_1^{k_1}\cdots u_m^{k_m}|~\ga\in \ovG_m,
k_l\in\mathbb{Z}_+, 1\leq l\leq m\}
$$
form a basis of the subalgebra $\overline{\mathcal{H}}_m(G)$ and
hence the second claim follows since there exists a canonical
epimorphism from the abstract algebra with generators $g\in G^m,
s_1,\ldots,s_{m-1}$ and $\va_1,\ldots,\va_m$ subject to the
relations~(\ref{reln:braid})-(\ref{reln:gp.va}) and
(\ref{eqn:gandu})-(\ref{eqn:uandu}) to the algebra
$\overline{\mathcal{H}}_m(G)$ by Lemma~\ref{lem:relations of
Htilde}.
\end{proof}
\section{The centralizers $\mZ_{m,n}$ and wreath Hecke
algebras}\label{CwRh}

In this section we shall formulate the connection between the
centralizer $\mZ_{m,n}$ of $G_{n-m}^{\prime}$ in $\F G_n$ and the
notion of wreath Hecke algebras $\mathcal{H}_m(G)$ introduced
in~\cite{WW}.
%
%
%
\subsection{The connection between $\mZ_{m,n}$ and $\mathcal{H}_m(G)$}
%

The following lemma will be used later on.
\begin{lem}\label{map:ovH to mZ}
The mapping
\begin{align}
\overline{\Psi}:\overline{\mathcal{H}}_m(G)&\longrightarrow \overline{\mZ}_{m,n}\\
g\mapsto g, s_l\mapsto s_l, \va_k&\mapsto\va_k,u_k\mapsto u_{k|n},
1\leq l\leq m-1, 1\leq k\leq m
\end{align}
defines an algebra homomorphism. Moreover, the algebra
$\overline{\mZ}_{m,n}$ is generated by the image of $\varphi$ and
the center of the subalgebra $\mathbb{F}\ovG_{n-m}^{\prime}$.
\end{lem}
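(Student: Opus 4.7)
For the first claim, the most economical route is to observe that $\overline{\mathcal{H}}_m(G)$ is already defined (before this lemma) as a subalgebra of $\overline{\mZ}_m$, and that the canonical projection $\theta^n\colon \overline{\mZ}_m\to \overline{\mZ}_{m,n}$ is an algebra homomorphism. Under $\theta^n$, the constant sequences coming from $\F\ovG_m$ (via Lemma~\ref{lem:ovmZm to FovGm}) go to themselves, and by construction $u_k=(u_{k|n})_{n\ge m}$ goes to $u_{k|n}$. Hence $\overline{\Psi}$ coincides with $\theta^n|_{\overline{\mathcal{H}}_m(G)}$ and is automatically a homomorphism. (Equivalently, one may invoke the presentation of $\overline{\mathcal{H}}_m(G)$ in Theorem~\ref{thm:tensor1} and check directly: relations~(\ref{reln:braid})--(\ref{reln:gp.va}) hold in $\F\ovG_n\supseteq \overline{\mZ}_{m,n}$ by Proposition~\ref{prop:pre. of ovGn}, while relations~(\ref{eqn:gandu})--(\ref{eqn:uandu}) for the $u_{k|n}$ can be verified directly from~(\ref{eqn:JM}).)

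For the generation claim, the plan is to use the basis $\{\Delta_n^{\Omega,\rho}\}$ of $\overline{\mZ}_{m,n}$ from Proposition~\ref{prop:basis for ovmZmn} and argue by induction on the $m$-degree $k=\text{ord}(\Omega)+\|\rho\|$. The base case $k=0$ gives $\Delta_n^{\Omega,\emptyset}=\Omega\in\F\ovG_m\subseteq \mathrm{Im}(\overline{\Psi})$ by Remark~\ref{rem:special elements}(1). For the inductive step, I would use three ingredients, all visible from the proof of Theorem~\ref{thm:tensor1}: (i) by Remark~\ref{rem:special elements}(2), $\Delta_n^{\mathbb{I},\rho}=\sum_{T\subseteq\{m+1,\ldots,n\},|T|=\|\rho\|}C^{\rho}_{n,T}\overline{\va}_T$ is supported on the indices $\{m+1,\ldots,n\}$ and lies in the center of $\F\ovG_{n-m}^{\prime}$; (ii) writing $\Omega=\va_{i_1}\cdots\va_{i_s}\be\alpha_1^{k_1}\cdots\alpha_m^{k_m}$ with $\be\in G_m$ and $\alpha_i$ the distinguished diagonal matrices, one has
$$
\Delta_n^{\Omega,\emptyset}\;=\;\va_{i_1}\cdots\va_{i_s}\,\be\, u_{1|n}^{k_1}\cdots u_{m|n}^{k_m}\;+\;(\text{lower $m$-degree}),
$$
whose leading term lies in $\mathrm{Im}(\overline{\Psi})$; (iii) the product rule $\Delta_n^{\Omega,\emptyset}\,\Delta_n^{\mathbb{I},\rho}=\Delta_n^{\Omega,\rho}+(\cdots)$ with $(\cdots)$ of strictly lower $m$-degree.

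Combining (i)--(iii), each basis element $\Delta_n^{\Omega,\rho}$ equals a product of an element of $\mathrm{Im}(\overline{\Psi})$ with a central element $\Delta_n^{\mathbb{I},\rho}\in Z(\F\ovG_{n-m}^{\prime})$, modulo elements of strictly smaller $m$-degree; the induction hypothesis then places those error terms in the subalgebra generated by $\mathrm{Im}(\overline{\Psi})$ and $Z(\F\ovG_{n-m}^{\prime})$, completing the argument. The only real obstacle is bookkeeping: one must verify carefully that \emph{all} the correction terms in (ii) and (iii) are genuinely of strictly lower $m$-degree so that the induction closes, and identify $Z(\F\ovG_{n-m}^{\prime})$ with the span of the $\Delta_n^{\mathbb{I},\rho}$ (which follows by applying Proposition~\ref{prop:basis for ovmZ0} to the shifted semigroup $\ovG_{n-m}^{\prime}$).
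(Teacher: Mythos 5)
Your proposal is correct and takes essentially the same approach as the paper: the paper's proof simply declares the first claim ``obvious'' and, for the generation claim, refers back to the proof of Theorem~\ref{thm:tensor1} (which produces the basis $\Delta_n^{\Omega,\emptyset}\Delta_n^\rho$ of $\overline{\mZ}_{m,n}$, the expression $\Delta_n^{\Omega,\emptyset}\equiv\va_{i_1}\cdots\va_{i_s}\be\,u_{1|n}^{k_1}\cdots u_{m|n}^{k_m}$ modulo lower $m$-degree, and the identification of the $\Delta_n^{\mathbb{I},\rho}$ with a basis of $Z(\F\ovG_{n-m}')$ via Proposition~\ref{prop:basis for ovmZ0}). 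What you add beyond the paper is the nice observation that $\overline{\Psi}=\theta^n|_{\overline{\mathcal{H}}_m(G)}$, which makes the first claim genuinely automatic, and an explicit induction on the $m$-degree $\text{ord}(\Omega)+\|\rho\|$ that spells out the triangular argument the paper leaves implicit in its appeal to Theorem~\ref{thm:tensor1}; this is a faithful unpacking, not a different route.
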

\begin{proof}
The first claim is obvious. It follows from the proof of
Theorem~\ref{thm:tensor1} that $\overline{\mZ}_{m,n}$ is generated
by
$\ga(u_{1|n})^{k_1}\cdots(u_{m|n})^{k_m}\Delta_n^{\mathbb{I},\rho}$
with $\ga\in \ovG_m, k_1,\ldots,k_m\in\mathbb{Z}_+$ and
$k_1+\cdots+k_m+\|\rho\|\leq n-m$. This implies that 
$\overline{\mZ}_{m,n}$ is generated by the image of  $\varphi$ and
the elements $\Delta_n^{\mathbb{I},\rho}$ with $\|\rho\|\leq n-m$.
By Remark~\ref{rem:special elements}, we have
$\Delta_n^{\mathbb{I},\rho}=\sum_{T\subseteq\{m+1,\ldots,n\},
|T|=\|\rho\|}C_{n,T}^{\rho}\overline{\va}_T.$  Observe that the
algebra $\F\ovG_{n-m}^{\prime}$ is isomorphic to $\F\ovG_{n-m}$.
Hence the elements $\Delta_n^{\mathbb{I},\rho}$ with
$\rho\in\mathcal{P}(G_*)$ and $\|\rho\|\leq n-m$ form a basis of
center of the subalgebra $\F\ovG_{n-m}^{\prime}$ by
Proposition~\ref{prop:basis for ovmZ0}.
\end{proof}
The wreath Hecke algebra $\mathcal{H}_m(G)$ (see \cite{WW}) is
defined to be the algebra with generators $g\in G^m,
s_1,\ldots,s_{m-1}$ and $x_1, \ldots, x_m$ with the defining
relations~(\ref{reln:braid})-(\ref{reln:gp.va}) and
\begin{align*}
gx_k&=x_kg,\\
s_kx_k&=x_{k+1}s_k+t_{k,k+1},\quad s_kx_l=x_ls_k, \quad l\neq k,k+1\\
x_kx_l&=x_lx_k.
\end{align*}
Then the following is obvious from Theorem~\ref{thm:tensor1}.
\begin{lem}\label{lem:ovH to H}
The mapping
$$
g\mapsto g, \quad s_k\mapsto s_k, \quad u_l\mapsto x_l,\quad
\va_l\mapsto 0
$$
defines an algebra epimorphism
$\overline{\Phi}:\overline{\mathcal{H}}_m(G)\rightarrow
\mathcal{H}_m(G)$.
\end{lem}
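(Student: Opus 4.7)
The plan is to verify that the assignment
$$g\mapsto g,\quad s_k\mapsto s_k,\quad u_l\mapsto x_l,\quad \va_l\mapsto 0$$
respects a complete set of defining relations of $\overline{\mathcal{H}}_m(G)$, and then to note surjectivity is automatic. By Theorem~\ref{thm:tensor1}, the algebra $\overline{\mathcal{H}}_m(G)$ is presented by the generators $g\in G^m$, $s_1,\ldots,s_{m-1}$, $\va_1,\ldots,\va_m$ (together with $u_1,\ldots,u_m$) subject to the relations~(\ref{reln:braid})--(\ref{reln:gp.va}) from $\F\ovG_m$ together with (\ref{eqn:gandu})--(\ref{eqn:uandu}). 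Therefore it suffices to show that each of these relations holds in $\mathcal{H}_m(G)$ after applying the assignment.

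First I would dispense with the relations that become trivial after setting $\va_l\mapsto 0$: the idempotency and commutativity (\ref{reln:va}), the interactions of $s_i$ with $\va$'s in (\ref{reln:symm.va.}), and the $G^n$--$\va$ relations (\ref{reln:gp.va}) all collapse to $0=0$. Similarly the two relations in (\ref{eqn:uandu}) involving $\va$, namely $\va_ku_k=u_k\va_k=0$ and $\va_lu_k=u_k\va_l$, map to $0=0$. Next I would check that the $\F\ovG_m$ relations not involving $\va$'s, namely the braid relations (\ref{reln:braid}) and the symmetric group--$G^m$ compatibility (\ref{reln:symm.gp}), are preserved because they match the corresponding defining relations of $\mathcal{H}_m(G)$ verbatim under $g\mapsto g,\, s_k\mapsto s_k$.

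The only relations requiring actual comparison are (\ref{eqn:gandu})--(\ref{eqn:skand u}) and the $u_ku_l=u_lu_k$ part of (\ref{eqn:uandu}). Under the assignment, these map to $gx_k=x_kg$, $s_kx_k=x_{k+1}s_k+t_{k,k+1}$, $s_kx_l=x_ls_k$ for $l\neq k,k+1$, and $x_kx_l=x_lx_k$ respectively, which are precisely the defining relations of the wreath Hecke algebra $\mathcal{H}_m(G)$ from~\cite{WW}; in the second one the factor $(1-\va_k)(1-\va_{k+1})$ becomes $1$ after substitution, producing exactly $t_{k,k+1}$ on the right. Hence a well-defined algebra homomorphism $\overline{\Phi}$ exists, and since its image contains all generators $g\in G^m,\, s_1,\ldots,s_{m-1},\, x_1,\ldots,x_m$ of $\mathcal{H}_m(G)$, it is surjective.

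There is no real obstacle here beyond bookkeeping; the only mildly delicate point is reading (\ref{eqn:skand u}) correctly so that the idempotent factor $(1-\va_k)(1-\va_{k+1})$ attached to $t_{k,k+1}$ specializes to $1$ rather than being dropped, which is what guarantees that the target relation in $\mathcal{H}_m(G)$ comes out with the correct inhomogeneous term.
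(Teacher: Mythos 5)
Your proof is correct and takes essentially the same route as the paper, which simply declares the lemma ``obvious from Theorem~\ref{thm:tensor1}''; your proposal is the explicit verification that the presentation from that theorem is respected by the assignment, with surjectivity automatic because the generators of $\mathcal{H}_m(G)$ are all in the image.

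One point you should make explicit rather than leave tacit: as printed, relation~(\ref{eqn:gandu}) reads $gu_k=u_k$, which under the assignment would give $gx_k=x_k$ --- not a defining relation of $\mathcal{H}_m(G)$. The intended relation (and the one that actually holds in $\overline{\mZ}_{m,n}$, as one checks directly from the definition of $u_{k|n}$ in~(\ref{eqn:JM}) using $h^{(k)}t_{kl}=t_{kl}h^{(l)}$) is $gu_k=u_kg$. You silently used this corrected form when you wrote that its image is $gx_k=x_kg$, and that correction is precisely what makes the verification go through; without it the map would not be well defined, so the point deserves a sentence rather than silence.
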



%


Recall from Lemma~\ref{lem:ovmZmn to Zmn} that the retraction
homomorphism $\Phi$ in~(\ref{map:retraction}) maps
$\overline{\mZ}_{m,n}$ onto $\mZ_{m,n}$. Then the following lemma
 can be easily checked by Lemma~\ref{map:ovH to mZ} and Lemma~\ref{lem:ovH to H}.

\begin{prop}\label{cor:Bm(n,G)}
The mapping
\begin{align*}
\Psi: \mathcal{H}_m(G)&\longrightarrow \mZ_{m,n},\\
 g\mapsto g,
s_k&\mapsto (k,k+1), x_l\mapsto\sum_{i=l+1}^nt_{li}
\end{align*}
is an algebra homomorphism. Moreover the following diagram is
commutative:
$$\unitlength=1cm
\begin{picture}(6,2.5)
\put(1.1,2){$\overline{\mathcal{H}}_m(G)$}
\put(3.9,2){$\overline{\mZ}_{m,n}$}
\put(1.1,0.2){$\mathcal{H}_m(G)$} \put(3.9,0.2){$\mZ_{m,n}$}
\put(2.3,2.1){\vector(1,0){1.5}} \put(2.3,0.3){\vector(1,0){1.5}}
\put(2.70,2.2){$\overline{\Psi}$} \put(2.70,0.4){$\Psi$}
\put(1.5,1.85){\vector(0,-1){1.3}}
\put(4.4,1.85){\vector(0,-1){1.3}}
\put(1.03,1.1){$\overline{\Phi}$} \put(4.6,1.1){$\Phi$}
\end{picture}$$
Hence the algebra $\mZ_{m,n}$ is generated by the center of the
subalgebra $\mathbb{F}G_{n-m}^{\prime}$ and the image of $\Psi$.
\end{prop}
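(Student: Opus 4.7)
The plan is to construct $\Psi$ via a diagram chase, using the maps $\overline{\Psi}$, $\overline{\Phi}$, and $\Phi$ already available. The first step is to identify the kernel of the epimorphism $\overline{\Phi}:\overline{\mathcal{H}}_m(G)\to\mathcal{H}_m(G)$ from Lemma~\ref{lem:ovH to H}. Comparing the presentation of $\overline{\mathcal{H}}_m(G)$ supplied by Theorem~\ref{thm:tensor1} with the defining relations of $\mathcal{H}_m(G)$, one sees that setting $\va_1=\cdots=\va_m=0$ in the former collapses it to the latter — in particular the factor $(1-\va_k)(1-\va_{k+1})$ in the relation $s_ku_k=u_{k+1}s_k+t_{k,k+1}(1-\va_k)(1-\va_{k+1})$ becomes $1$. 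Hence $\ker(\overline{\Phi})$ equals the two-sided ideal of $\overline{\mathcal{H}}_m(G)$ generated by $\va_1,\ldots,\va_m$.

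Next I would verify that the composite $\Phi\circ\overline{\Psi}:\overline{\mathcal{H}}_m(G)\to\mZ_{m,n}$ annihilates this ideal. This is immediate: $\overline{\Psi}(\va_l)=\va_l$ by Lemma~\ref{map:ovH to mZ}, while $\Phi(\va_l)=0$ by the definition of $\Phi$ in Corollary~\ref{map:retraction}. Thus $\Phi\circ\overline{\Psi}$ factors uniquely through $\overline{\Phi}$, yielding an algebra homomorphism $\Psi:\mathcal{H}_m(G)\to\mZ_{m,n}$ characterized by $\Psi\circ\overline{\Phi}=\Phi\circ\overline{\Psi}$, which is exactly the commutativity asserted by the square in the statement. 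Evaluating on the generators of $\mathcal{H}_m(G)$ gives $\Psi(g)=g$, $\Psi(s_k)=(k,k+1)$, and $\Psi(x_l)=\Phi(u_{l|n})=\sum_{i=l+1}^{n}t_{li}(l,i)$, using $\Phi(1-\va_j)=1$; this matches the stated formula (and exhibits $\Psi(x_l)$ as the Jucys-Murphy element $\xi_l$).

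For the final assertion about generators, I would apply $\Phi$ to Lemma~\ref{map:ovH to mZ}. That lemma says $\overline{\mZ}_{m,n}$ is generated by $\overline{\Psi}(\overline{\mathcal{H}}_m(G))$ together with the center of $\mathbb{F}\ovG_{n-m}^{\prime}$, and Lemma~\ref{lem:ovmZmn to Zmn} gives $\Phi(\overline{\mZ}_{m,n})=\mZ_{m,n}$. By commutativity of the square, $\Phi(\overline{\Psi}(\overline{\mathcal{H}}_m(G)))=\Psi(\overline{\Phi}(\overline{\mathcal{H}}_m(G)))=\Psi(\mathcal{H}_m(G))$. The only mildly delicate point — the main ``obstacle'' such as it is — is to check that $\Phi$ sends the center of $\mathbb{F}\ovG_{n-m}^{\prime}$ onto the center of $\mathbb{F}G_{n-m}^{\prime}$; this follows from the $m=0$ case of Lemma~\ref{lem:ovmZmn to Zmn} applied to the primed subgroups, since the restriction of $\Phi$ to $\mathbb{F}\ovG_{n-m}^{\prime}$ is just the retraction of Corollary~\ref{map:retraction} transported via the canonical embedding, and this retraction has been shown to carry centers onto centers.
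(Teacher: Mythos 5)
Your proof is correct and follows essentially the same route the paper sketches: obtain $\Psi$ from the composite $\Phi\circ\overline{\Psi}$ via the commutative square, with the paper's terse "can be easily checked by Lemma~\ref{map:ovH to mZ} and Lemma~\ref{lem:ovH to H}" fleshed out into the explicit kernel argument for $\overline{\Phi}$ and the transfer of the generation statement through $\Phi$. Two small typos in the paper that you have (correctly) compensated for without flagging: the relation (\ref{eqn:gandu}) should read $gu_k=u_kg$ rather than $gu_k=u_k$ (otherwise neither your presentation comparison nor the paper's claim would hold, and one checks directly that $G^m$ commutes with $u_{k|n}$), and the displayed formula for $\Psi(x_l)$ in the statement is missing the transposition factor; it should be $\sum_{i=l+1}^n t_{li}(l,i)=\xi_l$, which is what you in fact produce from $\Phi(u_{l|n})$.
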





%
%
\subsection{The Gelfand-Zetlin algebra for $G_n$.}\label{GZ algebra}
%
%
%
Recall that  $\mZ_{0,m}$ is the center of the algebra
$\mathbb{F}G_m$. Let us denote by $\mZ^{\prime}_{0,m}$ the center
of the subalgebra $\mathbb{F}G^{\prime}_m$ for each $0\leq m\leq
n$. Clearly we have $\mZ_{0,m}\cong\mZ^{\prime}_{0,m}$.
\begin{lem}\label{lem:G times Gn-1}
The centralizer $(\mathbb{F}G_n)^{G\times G_{n-1}^{\prime}}$ of
$G\times G_{n-1}^{\prime}$ in $\mathbb{F}G_n$ is generated by
$\mZ^{\prime}_{0,n-1}$, $\mZ_{0,1}$ and $\xi_1$. In particular,
$(\mathbb{F}G_n)^{G\times G_{n-1}^{\prime}}$ is commutative.
\end{lem}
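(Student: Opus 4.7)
The plan has two parts: first, verify that $\mZ_{0,1}$, $\mZ_{0,n-1}^{\prime}$, and $\xi_1$ all lie in the centralizer $\mathcal{C}:=(\mathbb{F}G_n)^{G\times G_{n-1}^{\prime}}$ and pairwise commute; second, apply Proposition~\ref{cor:Bm(n,G)} with $m=1$ to establish the reverse containment. Note that $\mathcal{C}=\mZ_{1,n}^{G^{(1)}}$, the $G^{(1)}$-invariants of $\mZ_{1,n}$ under conjugation, so the task is to identify these invariants.

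For the first part, $G^{(1)}$ and $G_{n-1}^{\prime}$ have disjoint position supports in $G_n$ and hence commute. Their centers $\mZ_{0,1}$ and $\mZ_{0,n-1}^{\prime}$ therefore commute with each other, and each commutes with both subgroups, placing them in $\mathcal{C}$. The element $\xi_1$ commutes with $G_{n-1}^{\prime}$ by the remark following~(\ref{eqn:jucys}); a short direct computation using the invariance of $\sum_{h\in G}h\otimes h^{-1}$ under simultaneous left translation by $G$ shows $\xi_1$ also commutes with $G^{(1)}$, so $\xi_1\in\mathcal{C}$. Since $\xi_1$ commutes with every element of $\mathbb{F}G^{(1)}$ and of $\mathbb{F}G_{n-1}^{\prime}$, it commutes in particular with $\mZ_{0,1}$ and with $\mZ_{0,n-1}^{\prime}$. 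Hence the subalgebra $R$ generated by $\mZ_{0,1}$, $\mZ_{0,n-1}^{\prime}$, and $\xi_1$ is a commutative subalgebra of $\mathcal{C}$, which will yield the ``in particular'' clause once we prove $\mathcal{C}=R$.

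For the reverse containment $\mathcal{C}\subseteq R$, Proposition~\ref{cor:Bm(n,G)} with $m=1$ asserts that $\mZ_{1,n}$ is generated by $\mZ_{0,n-1}^{\prime}$ together with $\Psi(\mathcal{H}_1(G))$. Since $\mathcal{H}_1(G)$ is generated by $G$ and $x_1$ with the sole commutation relation $gx_1=x_1g$, its image $\Psi(\mathcal{H}_1(G))$ is $\mathbb{F}$-spanned by $\{g\xi_1^k:g\in G^{(1)},\ k\geq 0\}$. Combined with the fact that $\mZ_{0,n-1}^{\prime}$ commutes with both $G^{(1)}$ and $\xi_1$, every $x\in\mZ_{1,n}$ can be written as an $\mathbb{F}$-linear combination of products $z\cdot g\cdot\xi_1^k$ with $z\in\mZ_{0,n-1}^{\prime}$, $g\in G^{(1)}$, $k\geq 0$. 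Under $G^{(1)}$-conjugation, the factors $z$ and $\xi_1^k$ are fixed, so $G^{(1)}$-invariance of $x\in\mathcal{C}$, after collecting terms by $\xi_1$-power, forces each $\mathbb{F}G^{(1)}$-factor to lie in $(\mathbb{F}G^{(1)})^{G^{(1)}}=\mZ_{0,1}$; thus $x\in R$.

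The principal technical obstacle is making the ``collection by $\xi_1$-power'' step rigorous, as the spanning set $\{zg\xi_1^k\}$ of $\mZ_{1,n}$ is not manifestly a basis. The cleanest remedy is to use the basis of $\mZ_{1,n}$ indexed by $G_{n-1}^{\prime}$-orbits in $G_n$ from Remark~\ref{Gn-mP Or Gn}: the $G_{n-1}^{\prime}$-class sums $C_n^{(g,z^k),\rho}$ for $g\in G$, $\rho\in\mathcal{P}(G_*)$, $k+\|\rho\|=n-1$. Conjugation by $h\in G^{(1)}$ sends $(g,k,\rho)$ to $(hgh^{-1},k,\rho)$, so $\mathcal{C}$ acquires a basis of $G$-class sums $D_n^{(C,k),\rho}:=\sum_{g\in C}C_n^{(g,z^k),\rho}$ indexed by $C\in G_*$. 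One then inducts on $k$: the case $k=0$ gives $D_n^{(C,0),\rho}$ as a scalar multiple of $z_C\cdot c_{\rho}^{\prime}$ with $z_C\in\mZ_{0,1}$ and $c_{\rho}^{\prime}\in\mZ_{0,n-1}^{\prime}$, hence in $R$; for $k\geq 1$, expanding $\xi_1^k\cdot z_C\cdot c_{\rho}^{\prime}$ produces $D_n^{(C,k),\rho}$ up to a nonzero scalar, modulo a linear combination of $D_n^{(C',k'),\rho'}$ with $k'<k$, which are in $R$ by the inductive hypothesis.
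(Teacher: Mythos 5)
Your two-part strategy follows the paper closely, and the first half (the inclusion of $\mZ_{0,1}$, $\mZ^{\prime}_{0,n-1}$, $\xi_1$ into the centralizer, and their pairwise commutativity) is fine. You also correctly flag the substantive point the paper glosses over in the reverse inclusion: knowing that $\mZ_{1,n}$ is generated by $A:=\langle\mZ^{\prime}_{0,n-1},\xi_1\rangle$ together with $G^{(1)}$, with $A$ centralizing $G^{(1)}$, does not by itself show that the $G^{(1)}$-invariants equal $A\cdot\mZ_{0,1}$, since $\mZ_{1,n}$ is only a quotient of $A\otimes\mathbb{F}G^{(1)}$ and the surjection need not split $G^{(1)}$-equivariantly outside the semisimple case. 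Your passage to the $G$-orbit-sum basis $\{D_n^{(C,k),\rho}\}$ of $\mathcal{C}$ via Remark~\ref{Gn-mP Or Gn} is exactly the right device.

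However, your inductive step is false as stated. The claim that $\xi_1^{k}z_C c^{\prime}_{\rho}$ equals a nonzero multiple of $D_n^{(C,k),\rho}$ modulo terms with strictly smaller $k=\text{ord}(\Omega)$ breaks down, because multiplying by a class sum from $G^{\prime}_{n-1}$ can splice the cycle through position $1$ onto a longer cycle and thereby \emph{increase} $\text{ord}(\Omega)$. Concretely, with $G=\{1\}$ and $n=4$, take $k=1$, $\rho=(2)$: then
$$
\xi_1\, c^{\prime}_{(2)}=\bigl((12)+(13)+(14)\bigr)\bigl((23)+(24)+(34)\bigr)= D_4^{(1),(2)}+D_4^{(2),(1)},
$$
and the second summand has $k'=2>1$. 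The induction therefore needs a different monovariant. The natural choice is the $m$-degree $\text{deg}_1=\text{ord}(\Omega)+\|\rho\|$ in the normalization of Lemma~\ref{basis for Zm(n,G)}(2) (where $\rho(C_1)$ has no part equal to $1$); this is subadditive under multiplication by~(\ref{eqn:mult.formula for degree 2}), and in the example above $D_4^{(1),(2)}$ has $1$-degree $3$ while $D_4^{(2),(1)}=D_4^{(2),\emptyset}$ has $1$-degree $2$, so with this parameter the triangularity does hold. (Alternatively, since the only downstream use, Corollary~\ref{cor:mult.free}, takes $\F=\mathbb{C}$, one can simply use semisimplicity of $\mathbb{C}G^{(1)}$ to make the functor of $G^{(1)}$-invariants exact on the surjection $A\otimes\mathbb{C}G^{(1)}\twoheadrightarrow\mZ_{1,n}$, which is the quickest way to justify the paper's terse ``hence''.)
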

\begin{proof}
It is easy to check that $\mZ^{\prime}_{0,n-1}$, $\mZ_{0,1}$ and
$\xi_1$ belong to $(\mathbb{F}G_n)^{G\times G_{n-1}^{\prime}}$. On
the other hand, $(\mathbb{F}G_n)^{G\times G_{n-1}^{\prime}}$ is
isomorphic to the centralizer of $G=G_1$ in the subalgebra
$(\mathbb{F}G_n)^{G^{\prime}_{n-1}}$ which is generated by $
\mZ^{\prime}_{0,n-1}, G_1$ and $\xi_1$ by
Proposition~\ref{cor:Bm(n,G)}. Observe that $G$ commutes with
$\mZ^{\prime}_{0,n-1}$ and $\xi_1$, and hence
$(\mathbb{F}G_n)^{G\times G_{n-1}^{\prime}}$ is contained in the
subalgebra of $(\mathbb{F}G_n)^{G^{\prime}_{n-1}}$ generated by
$\mZ^{\prime}_{0,n-1}$, $\xi_1$ and $\mZ_{0,1}$.
\end{proof}
From now on we assume that
$\F=\mathbb{C}$. The following lemma is standard.
\begin{lem}\cite[Lemma 1.0.1]{K}\label{lem:from K}
Let $\mathcal{B}\subseteq \mathcal{A}$ be semisimple finite
dimensional $\mathbb{F}$-algebras and let $\mathcal{C}$ be the
centralizer of $\mathcal{B}$ in $\mathcal{A}$. If $V$ is
irreducible over $\mathcal{A}$ and $W$ is irreducible over
$\mathcal{B}$, then
$$
\text{Hom}_{\mathcal{B}}(W,\text{res}^{\mathcal{A}}_{\mathcal{B}}V)
$$
is irreducible over $\mathcal{C}$.
\end{lem}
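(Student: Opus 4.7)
The plan is to combine the Artin--Wedderburn structure theory of semisimple algebras with the double centralizer principle. Let $\pi : \mathcal{A} \to \mathrm{End}_{\mathbb{F}}(V)$ denote the representation afforded by $V$. Since $\mathcal{A}$ is semisimple finite-dimensional and $V$ is simple, Schur's lemma (applied with $\mathbb{F}=\mathbb{C}$ algebraically closed) gives $\mathrm{End}_{\mathcal{A}}(V)=\mathbb{F}$, so the Artin--Wedderburn decomposition yields $\pi(\mathcal{A})=\mathrm{End}_{\mathbb{F}}(V)$. Moreover I would write $\mathcal{A}=\mathcal{A}_V\oplus\ker\pi$ as a direct sum of two-sided ideals, with $\pi|_{\mathcal{A}_V}$ an algebra isomorphism onto $\mathrm{End}_{\mathbb{F}}(V)$.

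Next I would decompose $\mathrm{res}^{\mathcal{A}}_{\mathcal{B}}V$ into $\mathcal{B}$-isotypic components. By semisimplicity of $\mathcal{B}$ and Schur's lemma,
$$
V \,\cong\, \bigoplus_{W'} W' \otimes H_{W'}, \qquad H_{W'} := \mathrm{Hom}_{\mathcal{B}}(W',V),
$$
where the sum runs over isomorphism classes of simple $\mathcal{B}$-modules. A direct computation then shows that the centralizer of $\pi(\mathcal{B})$ inside $\mathrm{End}_{\mathbb{F}}(V)$ equals $\bigoplus_{W'} \mathrm{End}_{\mathbb{F}}(H_{W'})$, and the tautological action of this centralizer makes each $H_W$ a simple module over the corresponding matrix summand $\mathrm{End}_{\mathbb{F}}(H_W)$, with the other summands acting as zero.

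The main step is to transfer this back to $\mathcal{C}$, i.e.\ to show that $\pi(\mathcal{C})$ equals the full centralizer of $\pi(\mathcal{B})$ in $\pi(\mathcal{A})$ rather than only a subalgebra of it. Here I would use that $\mathcal{A}_V$ is a two-sided ideal: for any $a\in\mathcal{A}_V$ with $[\pi(a),\pi(\mathcal{B})]=0$, the commutator $[a,\mathcal{B}]$ lies in $\mathcal{A}_V\cap\ker\pi=0$, so $a\in\mathcal{C}$. This yields $\pi(\mathcal{C})\supseteq\bigoplus_{W'}\mathrm{End}_{\mathbb{F}}(H_{W'})$, and the reverse inclusion is automatic. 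Consequently $\mathrm{Hom}_{\mathcal{B}}(W,\mathrm{res}^{\mathcal{A}}_{\mathcal{B}}V)$ is a simple module over $\pi(\mathcal{C})$, hence over $\mathcal{C}$. The mildly delicate point is this lifting argument via the two-sided ideal decomposition; all remaining steps are standard bookkeeping.
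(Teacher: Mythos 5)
The paper does not prove this lemma; it cites it verbatim from Kleshchev's book \cite[Lemma 1.0.1]{K}, so there is no in-paper argument to compare against. Your proof is correct and is essentially the standard double-centralizer argument that underlies Kleshchev's statement: writing $\pi(\mathcal{A})=\operatorname{End}_{\mathbb F}(V)$ (using that $\mathcal A$ is semisimple and $\mathbb F=\mathbb C$), decomposing $\operatorname{res}^{\mathcal A}_{\mathcal B}V\cong\bigoplus_{W'}W'\otimes H_{W'}$ with $H_{W'}=\operatorname{Hom}_{\mathcal B}(W',V)$, identifying the commutant of $\pi(\mathcal B)$ in $\operatorname{End}_{\mathbb F}(V)$ with $\bigoplus_{W'}\operatorname{End}_{\mathbb F}(H_{W'})$, and then using the two-sided ideal splitting $\mathcal A=\mathcal A_V\oplus\ker\pi$ to lift this commutant back into $\mathcal C$. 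The lifting step via $[a,\mathcal B]\subseteq\mathcal A_V\cap\ker\pi=0$ is the genuinely non-automatic point and you handle it correctly. One cosmetic remark: as stated the lemma implicitly excludes (or abuses language for) the case $H_W=0$; this is a pre-existing imprecision in the cited statement, not a gap in your argument, and your proof correctly yields simplicity whenever $H_W\neq 0$, which is all that is used in the paper (Corollary~\ref{cor:mult.free} explicitly allows dimension zero).
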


%
\begin{cor}\label{cor:mult.free} Let $V$ be an irreducible $\mathbb{C}G_n$-module. Then
the restriction $\text{res}^{\mathbb{C} G_n}_{\mathbb{C} (G\times
G_{n-1}^{\prime})}V$ is multiplicity free.
\end{cor}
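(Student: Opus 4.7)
The plan is to apply the centralizer criterion for multiplicity-free restriction in the standard way, using the two lemmas stated just before the corollary.

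First, I would recall that for semisimple finite-dimensional algebras $\mathcal{B}\subseteq\mathcal{A}$ over $\mathbb{C}$, with any irreducibles $V$ of $\mathcal{A}$ and $W$ of $\mathcal{B}$, the multiplicity $[\mathrm{res}^{\mathcal{A}}_{\mathcal{B}}V:W]$ equals $\dim\mathrm{Hom}_{\mathcal{B}}(W,\mathrm{res}^{\mathcal{A}}_{\mathcal{B}}V)$. Specializing to $\mathcal{A}=\mathbb{C}G_n$ and $\mathcal{B}=\mathbb{C}(G\times G^{\prime}_{n-1})$, both are group algebras over $\mathbb{C}$, hence semisimple, so this general fact applies.

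Next, I would invoke Lemma~\ref{lem:from K} to conclude that $\mathrm{Hom}_{\mathcal{B}}(W,\mathrm{res}^{\mathcal{A}}_{\mathcal{B}}V)$ carries an irreducible action of the centralizer $\mathcal{C}:=(\mathbb{C}G_n)^{G\times G^{\prime}_{n-1}}$. By Lemma~\ref{lem:G times Gn-1}, $\mathcal{C}$ is commutative, and since we work over the algebraically closed field $\mathbb{C}$, every irreducible module over a commutative finite-dimensional algebra is one-dimensional. Therefore $\dim\mathrm{Hom}_{\mathcal{B}}(W,\mathrm{res}^{\mathcal{A}}_{\mathcal{B}}V)\leq 1$ for every irreducible $W$, which is exactly the statement that the restriction is multiplicity free.

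There really is no hard step: the entire proof is a one-line consequence of combining Lemma~\ref{lem:G times Gn-1} (commutativity of the centralizer, which was the real work) with the general Lemma~\ref{lem:from K}. The only thing to be mindful of is to point out explicitly that $\mathbb{C}(G\times G^{\prime}_{n-1})$ and $\mathbb{C}G_n$ are semisimple so that Lemma~\ref{lem:from K} applies, and that over $\mathbb{C}$ irreducibles of a commutative algebra are one-dimensional so that commutativity of $\mathcal{C}$ forces multiplicity at most one.
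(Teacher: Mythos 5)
Your proof is correct and follows exactly the paper's argument: apply Lemma~\ref{lem:from K} to get irreducibility of $\mathrm{Hom}_{\mathcal{B}}(W,\mathrm{res}\,V)$ over the centralizer, then use Lemma~\ref{lem:G times Gn-1} (commutativity) plus algebraic closedness of $\mathbb{C}$ to force dimension at most one. You are slightly more explicit than the paper about why the two algebras are semisimple and why commutativity forces one-dimensionality, but the route is the same.
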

\begin{proof}
By Lemma~\ref{lem:from K}, $\text{Hom}_{\mathbb{C} (G\times
G_{n-1}^{\prime})}(W,\text{res}^{\F G_n}_{\mathbb{C} (G\times
G_{n-1}^{\prime})}V)$ is irreducible over the centralizer
$(\mathbb{C}G_n)^{G\times G_{n-1}^{\prime}}$ for any irreducible
$\mathbb{C} (G\times G_{n-1}^{\prime})$-module $W$ and hence has
dimension one or zero since the centralizer
$(\mathbb{C}G_n)^{G\times G_{n-1}^{\prime}}$ is commutative by
Lemma~\ref{lem:G times Gn-1}. Then it follows that the restriction
$\text{res}^{\mathbb{C} G_n}_{\mathbb{C} G\times
G_{n-1}^{\prime}}V$ is multiplicity free.
\end{proof}

Suppose $\{V_1, V_2,\ldots,V_r\}$ is a complete set of
nonisomorphic irreducible $\mathbb{C}G$-modules and fix a basis
$\Pi_i$ for $V_i$ with $1\leq i\leq r$.


 Now decompose the algebra $\mathbb{C}G_n$ according to
the Wedderburn theorem
$$
\mathbb{C}G_n=\bigoplus_{U}\mbox{End}_{\mathbb{C}}(U),
$$
where the sum is over the representation of the isoclasses of
irreducible $\mathbb{C}G_n$-modules. Let $U$ be an irreducible
$\mathbb{C}G_n$-module. Then Corollary~\ref{cor:mult.free} implies
that $\text{res}^{\mathbb{C} G_n}_{\mathbb{C} G\times
G^{\prime}_{n-1}}U=\bigoplus_{1\leq k\leq t}V_{i_k}\bigotimes
W_{i_k}$ for some $t\in\mathbb{Z}_+$ and irreducible
$\mathbb{C}G_{n-1}$-modules $W_{i_1},\ldots,W_{i_t}$. Furthermore,
$V_{i_k}\otimes W_{i_k} \ncong V_{i_s}\otimes W_{i_s} $ for $1\leq
k\neq s\leq t$ as $G\times G^{\prime}_{n-1}$-modules.

Decomposing each $W_{i_k}$ on restriction to $G\times
G^{\prime}_{n-2}$, and continuing inductively all the way to
$G\times G\times\ldots\times G$, we get a canonical decomposition
$$
\text{res}_{G^n}U=\bigoplus_{T} V_{T_1}\otimes
V_{T_2}\otimes\ldots\otimes V_{T_n}$$ where $T_j\in\{1,2,\ldots,r\}$ for
$1\leq j\leq n$, into irreducible $\mathbb{C}G^n$-modules, where
$T$ runs over all possibilities:
$$T: V_{T_1}\otimes\ldots\otimes
V_{T_n}\rightarrow  V_{T_1}\otimes \ldots\otimes
V_{T_{n-2}}\otimes W_n\rightarrow\cdots\rightarrow V_{T_1}\otimes
W_2\rightarrow U.$$ Each possible $T$ is called a path. We already
have a fixed basis $\Pi_i$ for each $V_i$. So we have a fixed
basis $\Pi_{T}$ for $ V_{T_1}\otimes V_{T_2}\otimes\ldots\otimes V_{T_n}$
for each possible path $T$. Finally, we get a basis $\Pi_U$ for
each irreducible $\mathbb{C}G_n$-module $U$, which is called a
{\em Gelfand-Zetlin} basis for $U$. Then we can identify
$$\mathbb{C}G_n=\bigoplus_{U}\text{M}_{\text{dim}U}(\mathbb{C}).$$

Define the {\em Gelfand-Zetlin} algebra
$\mathcal{A}_n\subseteq\mathbb{C}G_n$ as the subalgebra consisting
of all elements of $\mathbb{C}G_n$, which are diagonal with
respect to our fixed basis in every irreducible
$\mathbb{C}G_n$-module. Clearly, $\mathcal{A}_n$ is a maximal
commutative subalgebra of $\mathbb{C}G_n$. Also, $\mathcal{A}_n$
is semi-simple.  Note that in the case when $G=\{1\}$,
$\mathcal{A}_n$ coincides with the Gelfand-Zetlin algebra for the
symmetric group $S_n$.

On the other hand by taking the fixed basis $\Pi_i$ for $V_i$ with
$1\leq i\leq r$ we can also decompose the algebra $\mathbb{C}G$
according to the Wedderburn Theorem as
$$\mathbb{C}G=\bigoplus_{1\leq i\leq
r}\mbox{M}_{\mbox{dim}V_i}(\mathbb{C}).$$ Define $\La$ to be the
subalgebra which consists of all elements of $\mathbb{C}G$, which
are diagonal with respect to our fixed basis $\Pi_i$ in each $V_i$
for $1\leq i\leq r$. That means $\La$ consists of all diagonal
matrices under the above identification. Clearly, $\La$ is a
maximal commutative subalgebra of $\mathbb{C}G$. Also, $\La$ is
semi-simple. Denote by $\Lambda^{(k)}$ the corresponding
subalgebra of $\mathbb{C} G^{(k)}$ in $\mathbb{C} G_n$ for $1\leq
k\leq n$.

\begin{thm}\label{thm:GZ algebra} Retain the above notations. Then we have

\begin{enumerate}

\item $\mathcal{A}_n$ is generated by the subalgebras
$\mZ^{\prime}_{0,1},\ldots,\mZ^{\prime}_{0,n},
\La^{(1)},\ldots,\La^{(n)}$.

\item $\mathcal{A}_n$ is generated by $\La^{(1)},\ldots,\La^{(n)}$
and the JM-elements $\xi_1,\ldots,\xi_n$.
\end{enumerate}
\end{thm}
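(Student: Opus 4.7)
My plan is to prove (1) first by verifying the proposed generators lie in $\mathcal{A}_n$ and then that they separate the Gelfand--Zetlin basis vectors, and to derive (2) from (1) by induction on $n$.

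For part (1), I first check each $\mZ^{\prime}_{0,k}$ and each $\La^{(j)}$ acts diagonally in the GZ basis. The GZ basis is constructed by successive restriction along the chain $G_n\supseteq G\times G^{\prime}_{n-1}\supseteq\cdots\supseteq G^n$, so at each stage $k$ every basis vector lies in a single irreducible $G^k\times G^{\prime}_{n-k}$-summand of the form $V_{T_1}\otimes\cdots\otimes V_{T_k}\otimes W_{k+1}$. The algebra $\mZ^{\prime}_{0,n-k}$ acts as a scalar on the irreducible $G^{\prime}_{n-k}$-module $W_{k+1}$ (hence on the whole summand), and $\La^{(j)}$ acts as a diagonal matrix on $V_{T_j}$ in the fixed basis $\Pi_{T_j}$ (hence diagonally on $V_{T_1}\otimes\cdots\otimes V_{T_n}$ in the basis $\Pi_T$). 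So both families lie in $\mathcal{A}_n$.

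It remains to show the commutative subalgebra $B$ they generate exhausts $\mathcal{A}_n$. Since $\mathcal{A}_n\cong\mathbb{C}^N$ with $N$ equal to the number of GZ basis vectors, it suffices to check $B$ separates characters of $\mathcal{A}_n$, i.e.\ for any two distinct GZ basis vectors some element of $B$ has distinct eigenvalues on them. I argue by cases: (a) if they lie in non-isomorphic irreducible $\mathbb{C}G_n$-modules then $\mZ_{0,n}=\mZ^{\prime}_{0,n}$ separates them via distinct central characters; (b) if they lie in the same irreducible but in different paths, then either some $T_j\ne T^{\prime}_j$ and $\mZ_{0,1}^{(j)}\subseteq\La^{(j)}$ separates them, or some intermediate module $W_k\not\cong W^{\prime}_k$ and $\mZ^{\prime}_{0,n-k+1}$ separates them (the multiplicity-freeness of Corollary~\ref{cor:mult.free} is essential here, as it guarantees the isomorphism class of each intermediate module is a well-defined label along the chain); (c) if the paths coincide but the basis vectors of $\Pi_T=\Pi_{T_1}\times\cdots\times\Pi_{T_n}$ differ in the $j$th factor, then since $\La^{(j)}$ acts as the full diagonal algebra on $V_{T_j}$ in $\Pi_{T_j}$, some element of $\La^{(j)}$ separates them. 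A standard separation-of-points argument then extracts the minimal idempotents of $\mathcal{A}_n$ as polynomials in elements of $B$, so $B=\mathcal{A}_n$.

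For part (2), I induct on $n$; the case $n=1$ is immediate since $\mathcal{A}_1=\La^{(1)}$. For the inductive step, (1) reduces the problem to showing each $\mZ^{\prime}_{0,k}$ lies in the subalgebra $B^{\prime}$ generated by $\La^{(1)},\ldots,\La^{(n)}$ and $\xi_1,\ldots,\xi_n$. For $k<n$, under the isomorphism $G_k\cong G^{\prime}_k$ via the action on positions $n-k+1,\ldots,n$, the JM elements of $G_k$ correspond to $\xi_{n-k+1},\ldots,\xi_n$ and the $\La$-factors to $\La^{(n-k+1)},\ldots,\La^{(n)}$. The induction hypothesis applied to $G_k$ then shows the GZ algebra of $G^{\prime}_k$ is generated by these elements and hence sits inside $B^{\prime}$; since $\mZ^{\prime}_{0,k}$ is the center of $\mathbb{C}G^{\prime}_k$ and lies in its GZ algebra, $\mZ^{\prime}_{0,k}\subseteq B^{\prime}$. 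For $k=n$, Lemma~\ref{lem:G times Gn-1} shows that $(\mathbb{C}G_n)^{G\times G^{\prime}_{n-1}}\supseteq\mZ_{0,n}$ is generated by $\mZ^{\prime}_{0,n-1}$, $\mZ_{0,1}$, and $\xi_1$; the first is in $B^{\prime}$ by the preceding step with $k=n-1$, the second lies in $\La^{(1)}\subseteq B^{\prime}$, and the third is a listed generator.

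The main obstacle is the path bookkeeping in part (1): one has to enumerate precisely where two distinct GZ basis vectors can disagree---at an irreducible label $T_j$, at an intermediate module $W_k$, or at the level of basis vectors within a single $V_{T_1}\otimes\cdots\otimes V_{T_n}$---and verify that in each case some listed generator has different eigenvalues on the two vectors. This depends essentially on Corollary~\ref{cor:mult.free}, which ensures the isomorphism class of $W_k$ occurs with multiplicity one and is therefore recoverable as a central character of $\mZ^{\prime}_{0,n-k+1}$.
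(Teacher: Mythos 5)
Your proof is correct, and the underlying ingredients match the paper's: in both cases one works along the chain $G_n \supseteq G\times G^{\prime}_{n-1}\supseteq\cdots\supseteq G^n$, uses Schur's lemma on the centers $\mZ^{\prime}_{0,k}$ to recover the isomorphism type of each intermediate module $W_k$, uses $\La^{(j)}$ to pin down the vector inside each $V_{T_j}$, and then feeds Lemma~\ref{lem:G times Gn-1} into an induction on $n$ for part (2). The one genuine difference is in how you close part (1): the paper shows $\mathcal{A}_n\subseteq B$ by explicitly writing each minimal idempotent of $\mathcal{A}_n$ as a product $f_{j_1}^{(1)}\cdots f_{j_n}^{(n)}\cdot e_{V_{T_1}}^{(1)}\cdots e_{V_{T_n}}^{(n)}\cdot\cdots\cdot e_U$ of the generating idempotents and then invokes maximal commutativity of $\mathcal{A}_n$, whereas you show $B\subseteq\mathcal{A}_n$ and argue that $B$ separates the characters of $\mathcal{A}_n\cong\mathbb{C}^N$, which forces $B=\mathcal{A}_n$. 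These are essentially dual phrasings of the same observation; yours avoids verifying that the long product of commuting idempotents is exactly the rank-one projection, at the modest cost of a slightly more abstract ``separating subalgebra of $\mathbb{C}^N$'' step.

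Two small points worth making explicit if you were to write this up in full. First, in your case (b) the separation by $\mZ_{0,1}^{(j)}\subseteq\La^{(j)}$ (when $T_j\neq T'_j$) and by $\mZ^{\prime}_{0,n-k+1}$ (when $W_k\not\cong W'_k$) relies on the standard fact that over $\mathbb{C}$ non-isomorphic irreducibles of a finite group have distinct central characters; you should at least record this, since it is the point where $\F=\mathbb{C}$ and semisimplicity enter. Second, your induction in part (2) applies the inductive hypothesis to each $G^{\prime}_k$ with $k<n$; this is fine, but it is slightly more than necessary --- once you know $\mathcal{A}^{\prime}_{n-1}=\langle\xi_2,\ldots,\xi_n,\La^{(2)},\ldots,\La^{(n)}\rangle$, all of $\mZ^{\prime}_{0,1},\ldots,\mZ^{\prime}_{0,n-1}$ are automatically inside it by part (1) applied to $G^{\prime}_{n-1}$, so only the single induction step at level $n-1$ is needed, which is how the paper organizes it.
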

\begin{proof}
Let $e_U$ be the central idempotent of $\mathbb{C}G_n$, which acts
as identity on $U$ and as zero on any irreducible
$\mathbb{C}G_n$-modules $U^\prime \ncong U$. If $T:
V_{T_1}\otimes\ldots\otimes V_{T_n}\rightarrow  V_{T_1}\otimes
\ldots\otimes V_{T_{n-2}}\otimes U_n\rightarrow\cdots\rightarrow
V_{T_1}\otimes U_2\rightarrow U$ is one possibility of
decomposition, then $e_{V_{T_1}}^{(1)}e_{V_{T_2}}^{(2)}\cdots
e_{V_{T_n}}^{(n)}\centerdot e_{V_{T_1}}^{(1)}\cdots
e_{V_{T_{n-2}}}^{(n-2)}e_{U_n}\centerdot\cdots\cdot
e_{V_{T_1}}^{(1)}e_{U_2}\centerdot e_U$ acts on $U$ the projection to
$V_{T_1}\otimes V_{T_2}\otimes\ldots\otimes V_{T_n}$ and as zero
on any irreducible $\mathbb{C}G_n$-module  $U^\prime \ncong U$.

Let $v_{j_1}\otimes v_{j_2}\cdots v_{j_n}\in\Pi_{T}$ and let
$f_{j_k}\in \La\subseteq\mathbb{C}G$ be the element acting on
$V_{T_k}$ as the projection to $v_{j_k}$ and acting as zero on any
irreducible $\mathbb{C}G$-module $V^\prime\ncong V_{T_k}$ for each
$1\leq k\leq n$. Then $f_{j_1}^{(1)}f_{j_2}^{(2)}\cdots
f_{j_n}^{(n)}$ acts as the projection to $v_{j_1}\otimes
v_{j_2}\cdots v_{j_n}$ of $\mathbb{F}G^n$-module $V_{T_1}\otimes
V_{T_2}\otimes\ldots\otimes V_{T_n}$. Therefore
$f_{j_1}^{(1)}f_{j_2}^{(2)}\cdots f_{j_n}^{(n)}\centerdot
e_{V_{T_1}}^{(1)}e_{V_{T_2}}^{(2)}\cdots
e_{V_{T_n}}^{(n)}\centerdot e_{V_{T_1}}^{(1)}\cdots
e_{V_{T_{n-2}}}^{(n-2)}e_{U_n}\centerdot\cdots\cdot
e_{V_{T_1}}^{(1)}e_{U_2}\centerdot e_U$ acts as the projection of
$U$ to the one-dimensional subspace $\mathbb{F}v_{j_1}\otimes
v_{j_2}\otimes\cdots \otimes v_{j_n}$ and acts as zero on other irreducible
$\mathbb{F}G_n$-module  $U^\prime \ncong U$. However
$f_{j_1}^{(1)}f_{j_2}^{(2)}\cdots f_{j_n}^{(n)}\centerdot
e_{V_{T_1}}^{(1)}e_{V_{T_2}}^{(2)}\cdots
e_{V_{T_n}}^{(n)}\centerdot e_{U_2}e_{V_{T_3}}^{(3)}\cdots
e_{V_{T_n}}^{(n)}\centerdot\cdots\centerdot
e_{U_{n-1}}e_{V_{T_n}}^{(n)}\centerdot e_V \in \langle
\mZ^{\prime}_{0,1},\ldots,\mZ^{\prime}_{0,n},
\La^{(1)},\ldots,\La^{(n)}\rangle$ since $e_{U_m}\in
\mathcal{Z}^{\prime}_{0,n-m+1}$. That means that $\mathcal{A}_n$
is contained in the subalgebra generated by $
\mZ^{\prime}_{0,1},\ldots,\mZ^{\prime}_{0,n},$ and $
\La^{(1)},\ldots,\La^{(n)}$. But we already know $\mathcal{A}_n$
is a maximal commutative subalgebra of $\mathbb{C}G_n$. Hence
$\mathcal{A}_n= \langle
\mZ^{\prime}_{0,1},\ldots,\mZ^{\prime}_{0,n},
\La^{(1)},\ldots,\La^{(n)}\rangle$.

Note that $\xi_k$ is the sum of all elements with type
$\rho=(\rho(C))_{C\in G_*}$, where $\rho(1)=(21^{k-2}),
\rho(C)=\emptyset$ for $C\neq 1$ in $\mathbb{C}G^{\prime}_k$ minus
the sum of all elements with type $\tau=(\tau(C))_{C\in G_*}$,
where $\tau(1)=(21^{k-3}), \tau(C)=\emptyset$ for $C\neq 1$ in
$\mathbb{C}G^{\prime}_{k-1}$. So $\xi_k\in\langle
\mZ^{\prime}_{0,k},\mZ^{\prime}_{0,k-1}\rangle$. Thus $\xi_k\in
\mathcal{A}_n$. Therefore $\langle \xi_1,\ldots,\xi_n,
\La^{(1)},\ldots,\La^{(n)}\rangle \subseteq\mathcal{A}_n$.  We
shall use the induction on $n$ to prove $ \mathcal{A}_n\subseteq
\langle \xi_1,\ldots,\xi_n, \La^{(1)},\ldots,\La^{(n)}\rangle$ .
Identify $\mathbb{C} G_{n-1}$ with $\mathbb{C} G^{\prime}_{n-1}$
in $\mathbb{C} G_n$ and let us denote by
$\mathcal{A}^{\prime}_{n-1}$ the subalgebra in $\mathbb{C}
G^{\prime}_{n-1}$ corresponding to $\mathcal{A}_{n-1}$ in
$\mathbb{C}G_{n-1}$.  Then we have $\mathcal{A}^{\prime}_{n-1}=
\langle \xi_2,\ldots,\xi_n, \La^{(2)},\ldots,\La^{(n)}\rangle$. So
it follows from $(1)$ that
$\mathcal{A}_n=\langle\xi_2,\ldots,\xi_n,
\La^{(2)},\ldots,\La^{(n)}, \mZ^{\prime}_{0,n}, \La^{(1)}\rangle$.
It suffices to show $\mZ^{\prime}_{0,n}\subseteq\langle
\xi_1,\xi_2\ldots,\xi_n, \La^{(1)},\ldots,\La^{(n)}\rangle$.

We know $\mZ^{\prime}_{0,n}$ is the center of $\mathbb{C} G_n$,
and hence $\mZ^{\prime}_{0,n}\subseteq \mathbb{C} G_n^{G\times
G^{\prime}_{n-1}}=\langle \mZ^{\prime}_{0,n-1}, \xi_1,
\mathcal{Z}_1\rangle\subseteq \langle
\mathcal{Z}^{\prime}_{0,n-1}, \xi_n, \La^{(1)}\rangle$ since
$\mathcal{Z}_{0,1}\subseteq \La^{(1)}$. But
$\mathcal{Z}^{\prime}_{0,n-1}\subseteq
\mathcal{A}^{\prime}_{n-1}\subseteq \langle \xi_2,\ldots,\xi_{n},
\La^{(2)},\ldots,\La^{(n)}\rangle$. So
$\mathcal{Z}^{\prime}_{0,n-1}\subseteq\langle \xi_1,\ldots,\xi_n,
\La^{(1)},\ldots,\La^{(n)}\rangle$ as desired.
\end{proof}
\begin{rem} Note that Theorem~\ref{thm:GZ algebra} specializes to
\cite[Theorem 11]{O1} when $G=\{1\}$. Then we can extend the new
approach for representations of symmetric groups established by
Okounkov and Vershik in~\cite{OV} to the wreath products by taking
the advantage of the classification of finite dimensional
irreducible $\mathcal{H}_m(G)$-modules obtained in~\cite{WW}. We
omit this part in the present paper as it overlaps significantly
with~\cite{Pu}.

\end{rem}

%

\end{document}